\newtheorem{theorem}{Theorem}[section]
\newtheorem{corollary}[theorem]{Corollary}
\theoremstyle{theorem}
\newtheorem{lemma}[theorem]{Lemma}
\theoremstyle{definition}
\numberwithin{equation}{section} \makeatletter
\begin{document}
\title[Energy, Laplacian energy of double graphs and new families of equienergetic graphs]{Energy, Laplacian energy of double graphs and new families of equienergetic graphs}
\author{S. Pirzada and Hilal A Ganie}

\address{Department of Mathematics \\
    University of Kashmir \\
   Srinagar, Hazratbal 190006\
   \\ India}
 \email{pirzadasd@kashmiruniversity.ac.in; sdpirzada@yahoo.co.in}
 \email{hilahmad1119kt@gmail.com}
\date{}

\subjclass[2000]{05C50, 05C30,}
\begin{abstract}
For a graph $G$ with vertex set $V(G)=\{v_1, v_2, \cdots, v_n\}$, the extended double cover $G^*$ is a bipartite graph with bipartition (X, Y), $X=\{x_1, x_2, \cdots, x_n\}$ and $Y=\{y_1, y_2, \cdots, y_n\}$, where two vertices $x_i$ and $y_j$ are adjacent if and only if $i=j$ or $v_i$ adjacent to $v_j$ in $G$. The double graph $D[G]$ of $G$ is a graph obtained by taking two copies of $G$ and joining each vertex in one copy with the neighbours of corresponding vertex in another copy. In this paper we study energy and Laplacian energy of the graphs $G^*$ and $D[G]$, $L$-spectra of $G^{k*}$ the $k$-th iterated extended double cover of $G$. We obtain a formula for the number of spanning trees of $G^*$. We also obtain some new families of equienergetic and $L$-equienergetic graphs.
\end{abstract}
\keywords{Laplacian energy, spectra, double graph, $L$-equienergetic, equienergetic}
\subjclass[2000]{05C50, 05C30,}
\maketitle{}
\section{introduction}\label{sec1}
Let $G$ be finite, undirected, simple graph with $n$ vertices and $m$ edges having vertex set $V(G)=\{v_1, v_2, \cdots, v_n\}$. Throughout this paper we denote such a graph by $G(n,m)$. The adjacency matrix $A=(a_{ij})$ of $G$ is a $(0, 1)$-square matrix of order $n$ whose $(i,j)$-entry is equal to one if $v_i$ is adjacent to $v_j$ and equal to zero, otherwise. The spectrum of the adjacency matrix is called the $A$-spectrum of $G$. If $\lambda_1, \lambda_2, \cdots, \lambda_n$ is the adjacency spectrum of $G$, the energy of $G$ is defined as $E(G)=\sum_{i=1}^{n}|\lambda_i|$.  This quantity introduced by I. Gutman has noteworthy chemical applications (see \cite{gp}).\\
\indent Let $D(G)={diag}(d_1, d_2, \cdots, d_n)$ be the diagonal matrix associated to $G$, where $d_i$ is the degree of vertex $v_i$. The matrices $L(G)$=$D(G)$-$A(G)$ and $L^+(G)$=$D(G)$+$A(G)$ are called Laplacian and signless Laplacian matrices and their spectras are respectively called Laplacian spectrum ($L$-spectrum) and signless Laplacian spectrum ($Q$-spectrum) of $G$. Being real symmetric, positive semi-definite matrices, let $0=\mu_n\leq\mu_{n-1}\leq\cdots\leq\mu_1$ and $0\leq\mu^+_n\leq\mu^+_{n-1}\leq\cdots\leq\mu^+_1$ be respectively the $L$-spectrum and $Q$-spectrum of $G$. It is well known that $\mu_n$=0 with multiplicity equal to the number of connected components of $G$ (see \cite{f}). In (see\cite{f}) Fiedler showed that a graph $G$ is connected if and only if its second smallest Laplacian eigenvalue is positive and called it as the algebraic connectivity of the graph $G$. Also it is well known that for a bipartite graph the $L$-spectra and $Q$-spectra are same (see \cite{cs}). The Laplacian energy of a graph $G$ as put forward by Gutman and Zhou (see \cite{gz}) is defined as $LE(G)=\sum\limits_{i=1}^{n}|\mu_i-\frac{2m}{n}|$. This quantity, which is an extension of graph-energy concept has found remarkable chemical applications beyond the molecular orbital theory of conjugated molecules (see \cite{rg}). Both energy and Laplacian energy have been extensively studied in the literature (see \cite{a,lsg,wh,z,zg} and the references therein). It is easy to see that
\indent $tr(L(G))=\sum_{i=1}^{n}\mu_i=\sum_{i=1}^{n-1}\mu_i=2m$ and $tr(LE^+(G))=\sum_{i=1}^{n}\mu^+_i=2m$.\\
\indent Two graphs $G_1$ and $G_2$ of same order are said to be equienergetic if $E(G_1)$=$E(G_2)$, (see \cite{b,rgra}). In analogy to this two graphs $G_1$ and $G_2$ of same order are said to $L$-equienergetic if $LE(G_1)$=$LE(G_2)$ and $Q$-equienergetic if $LE^+(G_1)$=$LE^+(G_2)$. Since cospectral (Laplacian cospectral) graphs are always equienergetic ($L$-equienergetic) the problem of constructing equienergetic ($L$-equienergetic) graphs is only considered for non-cospecral (non Laplacian cospectral) graphs.\\
\indent The extended double cover \cite{c} of the graph $G(n,m)$ with vertex set $V(G)=\{v_1, v_2, \cdots, v_n\}$ is  a bipartite graph $G^*$ with bipartition (X, Y), $X=\{x_1, x_2, \cdots, x_n\}$ and $Y=\{y_1, y_2, \cdots, y_n\}$, where two vertices $x_i$ and $y_j$ are adjacent if and only if $i=j$ or $v_i$ adjacent $v_j$ in $G$. It is easy to see that $G^*$ is connected if and only if $G$ is connected and a vertex $v_i$ is of degree $d_i$ in $G$ if and only if it is of degree $d_i+1$ in $G^*$. Also the extended double cover $G^*$ of the graph $G$ always contains a perfect matching. The double graph $D[G]$ of $G$ is a graph obtained by taking two copies of $G$ and joining each vertex in one copy with the neighbours of corresponding vertex in another copy. The $k$-fold graph $D^k[G]$ \cite{ms} of the graph $G$ is obtained by taking $k$ copies of the graph $G$ and joining each vertex in one of the copy with the neighbours of the corresponding vertices in the other copies. If $T_n$ is the graph obtained from the complete graph $K_n$ by adding a loop at each of the vertex, it is easy to see that $D^k[G]=G\otimes T_k$. In this paper we study energy, Laplacian energy of the graphs $G^*$ and $D[G]$, the $L$-spectra of $G^{k*}$ the $k$-th iterated extended double cover of $G$ and obtain a formula for the number of spanning trees of $G^*$. We also obtain some new families of the equienergetic and $L$-equienergetic graphs.\\
\indent  We denote the complement of graph $G$ by $\bar{G}$, the complete graph on $n$ vertices by $K_n$, the empty graph by $\bar{K_n}$ and the complete bipartite graph with cardinalities of partite sets $q$ and $r$ by $K_{q,r}$. The rest of the paper is organised as follows. In section 2, energy of the graphs $G^*$ and $D^k[G]$ are obtained and some new families of equienergetic graphs are given, in section 3 $L$-spectra of $G^{k*}$ and a formula for the number of spanning tress of $G^*$ is obtained and in section 4 Laplacian energy of the graphs $G^*$ and $D^k[G]$ and the construction of some new families of $L$-equienergetic graphs by using the graphs $G^{k*}$ and $D^k[G]$ is presented.

\section{energy of double graphs}

In this section we find the energy of the graphs $G^*$ and $D^k[G]$. We also construct some new families of equienergetic graphs based on these graphs.\\
\indent For the graphs $G_1$ and $G_2$ with disjoint vertex sets $V(G_1)$ and $V(G_2)$, the {\it cartesian product} is a graph $G=G_1\times G_2$ with vertex set $V(G_1)\times V(G_2)$ and an edge $((u_1, v_1), (u_2, v_2))$ if and only if $u_1=u_2$ and $(v_1, v_2)$ is an edge of $G_2$ or $v_1=v_2$
and $(u_1, u_2)$ is an edge of $G_1$. The following result gives the $A$-spectra ($L$-spectra) of the cartesian product of graphs.
\begin{lemma}\cite{cds}
If $G_1(n_1,m_1)$ and $G_2(n_2,m_2)$ are two graphs having $A$-spectra ($L$-spectra) respectively as, $\mu_1, \mu_2, \cdots, \mu_{n_1}$ and $\sigma_1, \sigma_2, \cdots, \sigma_{n_2}$, then the $A$-spectra  ($L$-spectra) of $G=G_1\times G_2$ is $\mu_i+\sigma_j$ where $i=1,2,\cdots,n_1$ and $j=1,2,\cdots,n_2$.
\end{lemma}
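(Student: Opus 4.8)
The plan is to realize the adjacency (resp.\ Laplacian) matrix of the Cartesian product $G_1\times G_2$ as a Kronecker sum of the corresponding matrices of the two factors, and then to read off the spectrum from the tensor products of their eigenvectors. First I would fix the natural ordering of $V(G_1)\times V(G_2)$ in which the vertex $(u_p,v_q)$ occupies position $(p-1)n_2+q$; with this labelling the defining adjacency rule of the Cartesian product translates directly into the matrix identity
\[
A(G_1\times G_2)=A(G_1)\otimes I_{n_2}+I_{n_1}\otimes A(G_2),
\]
where $\otimes$ denotes the Kronecker product and $I_k$ the identity matrix of order $k$. Indeed, an edge joins $(u_p,v_q)$ to $(u_{p'},v_{q'})$ precisely when either $p=p'$ and $v_q$ is adjacent to $v_{q'}$ in $G_2$, which is accounted for by the second summand, or $q=q'$ and $u_p$ is adjacent to $u_{p'}$ in $G_1$, accounted for by the first.

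The key step is the eigenvector computation. Let $u^{(i)}$ be an eigenvector of $A(G_1)$ for $\mu_i$ and $v^{(j)}$ an eigenvector of $A(G_2)$ for $\sigma_j$. Using the mixed-product property $(P\otimes Q)(x\otimes y)=(Px)\otimes(Qy)$, a direct computation gives
\[
\big(A(G_1)\otimes I_{n_2}+I_{n_1}\otimes A(G_2)\big)\big(u^{(i)}\otimes v^{(j)}\big)=(\mu_i+\sigma_j)\,\big(u^{(i)}\otimes v^{(j)}\big),
\]
so that each number $\mu_i+\sigma_j$ is an eigenvalue of $A(G_1\times G_2)$. Since the matrices involved are real symmetric, the $u^{(i)}$ may be chosen to form an orthonormal basis of $\mathbb{R}^{n_1}$ and the $v^{(j)}$ an orthonormal basis of $\mathbb{R}^{n_2}$; then the $n_1n_2$ vectors $u^{(i)}\otimes v^{(j)}$ form an orthonormal basis of $\mathbb{R}^{n_1n_2}$, so the list $\{\mu_i+\sigma_j\}$ exhausts the spectrum with the correct multiplicities.

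For the Laplacian statement I would first observe that the degree of $(u_p,v_q)$ in $G_1\times G_2$ equals $d_{G_1}(u_p)+d_{G_2}(v_q)$, whence $D(G_1\times G_2)=D(G_1)\otimes I_{n_2}+I_{n_1}\otimes D(G_2)$. Subtracting the adjacency identity then yields the analogous Kronecker sum
\[
L(G_1\times G_2)=L(G_1)\otimes I_{n_2}+I_{n_1}\otimes L(G_2),
\]
and the same tensor-eigenvector argument applies verbatim. The computation here is entirely routine; the only point requiring genuine care is the verification that the chosen vertex ordering converts the combinatorial adjacency rule into the stated Kronecker-sum identity, as everything afterwards follows formally from the mixed-product property of $\otimes$.
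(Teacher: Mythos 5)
Your proof is correct and complete: the Kronecker-sum identity $A(G_1\times G_2)=A(G_1)\otimes I_{n_2}+I_{n_1}\otimes A(G_2)$ (and likewise for $D$, hence for $L$), combined with the mixed-product property applied to tensor products of orthonormal eigenbases, is exactly the standard argument for this result. The paper itself states the lemma as a cited fact from Cvetkovi\'c--Doob--Sachs and gives no proof, so there is nothing to compare against; your argument is the textbook one and fills that gap correctly, including the multiplicity count via the orthonormal basis $\{u^{(i)}\otimes v^{(j)}\}$.
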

\indent The {\it conjunction (Kronecker product)} of $G_1$ and $G_2$ is a graph $G=G_1\otimes G_2$ with vertex set $V(G_1)\times V(G_2)$ and an edge $((u_1,v_1), (u_2, v_2))$ if and only if $(u_1, u_2)$ and $(v_1, v_2)$ are edges in $G_1$ and $G_2$, respectively. The following result gives the $A$-spectra ($L$-spectra) of the Kronecker product of graphs.
\begin{lemma}\cite{cds}
If $G_1(n_1,m_1)$ and $G_2(n_2,m_2)$ are two graphs having $A$-spectra ($L$-spectra) respectively as, $\mu_1, \mu_2, \cdots, \mu_{n_1}$ and $\sigma_1, \sigma_2, \cdots, \sigma_{n_2}$, then the $A$-spectra  ($L$-spectra) of $G=G_1\otimes G_2$ is $\mu_i\sigma_j$ where $i=1,2,\cdots,n_1$ and $j=1,2,\cdots,n_2$.
\end{lemma}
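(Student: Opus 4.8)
The plan is to reduce the statement to a purely matrix-theoretic fact about Kronecker products of matrices. The key structural observation is that, with a suitable labelling of $V(G_1)\times V(G_2)$, the adjacency matrix of the conjunction factorises as $A(G_1\otimes G_2)=A(G_1)\otimes A(G_2)$. This follows directly from the definition of the conjunction: the $((u_1,v_1),(u_2,v_2))$-entry of $A(G_1\otimes G_2)$ is $1$ exactly when $(u_1,u_2)\in E(G_1)$ and $(v_1,v_2)\in E(G_2)$, that is, exactly when the product $a_{u_1u_2}\,b_{v_1v_2}$ of the corresponding entries of $A(G_1)$ and $A(G_2)$ equals $1$. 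Since both matrices are $(0,1)$-matrices, this product is precisely the $((u_1,v_1),(u_2,v_2))$-entry of $A(G_1)\otimes A(G_2)$.

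Next I would invoke the spectral behaviour of Kronecker products. Let $\mathbf{u}_1,\dots,\mathbf{u}_{n_1}$ be eigenvectors of $A(G_1)$ for the eigenvalues $\mu_1,\dots,\mu_{n_1}$ and $\mathbf{w}_1,\dots,\mathbf{w}_{n_2}$ eigenvectors of $A(G_2)$ for $\sigma_1,\dots,\sigma_{n_2}$; as both matrices are real symmetric, each family may be taken to be an orthonormal basis. Using the mixed-product property $(P\otimes Q)(\mathbf{p}\otimes\mathbf{q})=(P\mathbf{p})\otimes(Q\mathbf{q})$ one computes
\[
\bigl(A(G_1)\otimes A(G_2)\bigr)(\mathbf{u}_i\otimes\mathbf{w}_j)=(A(G_1)\mathbf{u}_i)\otimes(A(G_2)\mathbf{w}_j)=\mu_i\sigma_j\,(\mathbf{u}_i\otimes\mathbf{w}_j),
\]
so each $\mathbf{u}_i\otimes\mathbf{w}_j$ is an eigenvector of $A(G_1\otimes G_2)$ with eigenvalue $\mu_i\sigma_j$. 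The $n_1n_2$ vectors $\mathbf{u}_i\otimes\mathbf{w}_j$ are orthonormal, hence form a complete eigenbasis, and so the list $\{\mu_i\sigma_j : 1\le i\le n_1,\ 1\le j\le n_2\}$ exhausts the $A$-spectrum of $G_1\otimes G_2$, which settles the adjacency assertion.

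The step I expect to be the main obstacle is the Laplacian ($L$-spectra) part, and here some care is essential. The degree of $(u,v)$ in $G_1\otimes G_2$ is $d_{G_1}(u)\,d_{G_2}(v)$, so the degree matrices satisfy $D(G_1\otimes G_2)=D(G_1)\otimes D(G_2)$ and therefore
\[
L(G_1\otimes G_2)=D(G_1)\otimes D(G_2)-A(G_1)\otimes A(G_2),
\]
which is in general different from $L(G_1)\otimes L(G_2)=(D(G_1)-A(G_1))\otimes(D(G_2)-A(G_2))$. Consequently the clean eigenvector argument above does not transfer verbatim. When the factors are regular, say $r_1$- and $r_2$-regular, the matrix $D(G_1)\otimes D(G_2)=r_1r_2\,I$ is scalar, so the same basis $\mathbf{u}_i\otimes\mathbf{w}_j$ still diagonalises $L(G_1\otimes G_2)$, but then the Laplacian eigenvalues are $r_1r_2-\mu_i\sigma_j$ rather than $\mu_i\sigma_j$. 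I would therefore treat the adjacency case as the substantive content obtained by the product argument, and handle the Laplacian statement separately (under explicit regularity or other structural hypotheses) rather than by a literal reuse of the same computation.
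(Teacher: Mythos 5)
The paper offers no proof of this lemma at all --- it is quoted from Cvetkovi\'c--Doob--Sachs --- so there is nothing internal to compare against; but your argument for the adjacency part is exactly the standard one (identify $A(G_1\otimes G_2)$ with $A(G_1)\otimes A(G_2)$ under the natural vertex ordering, then apply the mixed-product property to the tensor products of eigenvectors), and it is complete and correct. More importantly, your refusal to extend the computation to the Laplacian is not excessive caution: the parenthetical ``($L$-spectra)'' claim in the lemma as stated is simply false. Take $G_1=G_2=K_2$, with $L$-spectrum $\{0,2\}$. Then $K_2\otimes K_2$ is the disjoint union of two edges, whose $L$-spectrum is $\{2,2,0,0\}$, whereas the products $\mu_i\sigma_j$ give $\{4,0,0,0\}$. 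The obstruction is precisely the one you isolate: $L(G_1\otimes G_2)=D(G_1)\otimes D(G_2)-A(G_1)\otimes A(G_2)$ does not factor as $L(G_1)\otimes L(G_2)$, and even in the regular case the eigenvalues come out as $r_1r_2-\lambda_i\nu_j$ in terms of \emph{adjacency} eigenvalues, not as products of Laplacian eigenvalues. The cited source proves only the adjacency version (as an instance of the NEPS construction), and, fortunately, the paper itself invokes Lemma 2.2 only for $A$-spectra (in Theorems 2.6--2.9), so the erroneous clause is never actually used. Your proposal is therefore correct where the lemma is correct, and correctly flags the part of the lemma that cannot be proved because it is untrue.
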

\indent The {\it join (complete product)} of $G_1$ and $G_2$ is a graph $G=G_1\vee G_2$ with vertex set $V(G_1)\cup V(G_2)$ and an edge set consisting of all the edges of $G_1$ and $G_2$ together with the edges joining each vertex of $G_1$ with every vertex of $G_2$. The $L$-spectra of join of graphs is given by the following result.
\begin{lemma}\cite{cds}
If $G_1(n_1,m_1)$ and $G_2(n_2,m_2)$ are two graphs having $L$-spectra respectively as $\mu_1,\mu_2,\cdots,\mu_{n_1-1},\mu_{n_1}=0$ and $\sigma_1, \sigma_2, \cdots, \sigma_{n_2-1}, \sigma_{n_2}=0$, then the $L$-spectra of $G=G_1\vee G_2$ is $n_1+n_2, n_1+\sigma_1, n_1+\sigma_2, \cdots, n_1+\sigma_{n_2-1}, n_2+\mu_1, n_2+\mu_2, \cdots, n_2+\mu_{n_1-1}, 0$.
\end{lemma}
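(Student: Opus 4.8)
The plan is to reduce everything to two standard facts about the Laplacian spectrum: its behaviour under disjoint union and under complementation. First I would record the complementation rule. For any graph $H$ on $N$ vertices with Laplacian eigenvalues $\nu_1\ge\cdots\ge\nu_{N-1}\ge\nu_N=0$, the complement $\overline{H}$ has Laplacian eigenvalues $N-\nu_1,\ldots,N-\nu_{N-1},0$. This is immediate from $L(H)+L(\overline{H})=NI-J$, where $J$ is the all-ones matrix: the vector $\mathbf{1}$ is a common null vector, and on $\mathbf{1}^{\perp}$ the matrix $J$ vanishes, so $L(H)$ and $L(\overline{H})$ are simultaneously diagonalizable there with paired eigenvalues summing to $N$.

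Next I would exploit the identity $G_1\vee G_2=\overline{\overline{G_1}\cup\overline{G_2}}$, where $\cup$ denotes disjoint union, and set $n=n_1+n_2$. Since the Laplacian spectrum of a disjoint union is the multiset union of the summands' spectra, I would first apply the complementation rule to $G_1$ and $G_2$ separately: this gives $\overline{G_1}$ the spectrum $0,\,n_1-\mu_1,\ldots,n_1-\mu_{n_1-1}$ and $\overline{G_2}$ the spectrum $0,\,n_2-\sigma_1,\ldots,n_2-\sigma_{n_2-1}$. Hence $H:=\overline{G_1}\cup\overline{G_2}$ has Laplacian spectrum consisting of two zeros together with the values $n_1-\mu_i$ and $n_2-\sigma_j$.

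Finally I would apply the complementation rule once more to $H$, now on $n$ vertices, to obtain $G=\overline{H}$. One of the two zeros of $H$ is the distinguished null eigenvalue, which maps to a zero of $G$; the second zero becomes $n-0=n_1+n_2$; each value $n_1-\mu_i$ becomes $n-(n_1-\mu_i)=n_2+\mu_i$; and each value $n_2-\sigma_j$ becomes $n-(n_2-\sigma_j)=n_1+\sigma_j$. Collecting these yields exactly the asserted list $n_1+n_2,\,n_1+\sigma_1,\ldots,n_1+\sigma_{n_2-1},\,n_2+\mu_1,\ldots,n_2+\mu_{n_1-1},\,0$.

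The only delicate point, and the step I would double-check, is the bookkeeping of the two zero eigenvalues of the disconnected graph $H$: exactly one of them must be treated as the fixed null eigenvalue (consistent with $G=G_1\vee G_2$ being connected and thus having a simple zero), while the other is precisely what produces the eigenvalue $n_1+n_2$. As a cross-check that avoids complementation altogether, I would keep in reserve a direct eigenvector construction from the block form $L(G)=\left(\begin{smallmatrix} L(G_1)+n_2 I & -J \\ -J & L(G_2)+n_1 I \end{smallmatrix}\right)$: vectors $(u,0)$ with $u\perp\mathbf{1}$ an eigenvector of $L(G_1)$ give eigenvalue $n_2+\mu_i$, vectors $(0,v)$ with $v\perp\mathbf{1}$ give $n_1+\sigma_j$, the all-ones vector gives $0$, and the vector $(n_2\mathbf{1}_{n_1},-n_1\mathbf{1}_{n_2})$ gives $n_1+n_2$; a dimension count then confirms that these account for all $n$ eigenvalues and fixes the signs and multiplicities.
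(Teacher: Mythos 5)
The paper offers no proof of this lemma at all: it is quoted verbatim from Cvetkovi\'c--Doob--Sachs \cite{cds} and used as a black box, so there is no internal argument to compare yours against. Your argument is the standard textbook proof and it is correct. The complementation rule follows exactly as you say from $L(H)+L(\overline{H})=NI-J$ together with $JL(H)=L(H)J=0$, which gives simultaneous diagonalizability; combining it with the disjoint-union rule via $G_1\vee G_2=\overline{\overline{G_1}\cup\overline{G_2}}$ yields the stated spectrum, and your bookkeeping of the two zero eigenvalues of the disconnected graph $\overline{G_1}\cup\overline{G_2}$ (one remaining the distinguished null eigenvalue, the other mapping to $n_1+n_2$) is exactly right. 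The direct eigenvector verification you keep in reserve is also sound: the vectors $(u,0)$ with $u\perp\mathbf{1}$, $(0,v)$ with $v\perp\mathbf{1}$, $\mathbf{1}$, and $(n_2\mathbf{1}_{n_1},-n_1\mathbf{1}_{n_2})$ are mutually orthogonal and number $(n_1-1)+(n_2-1)+1+1=n_1+n_2$, so they exhaust the spectrum. Either route would serve as a complete proof; nothing is missing.
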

\indent The following result \cite{c} gives the $A$-spectra of $G^*$, the extended double cover of the graph $G$.
\begin{theorem}
If $\lambda_1, \lambda_2, \cdots, \lambda_n$ is the $A$-spectra of a graph $G$, then the $A$-spectra of the graph $G^*$ is $\pm(\lambda_1+1), \pm(\lambda_2+1), \cdots, \pm(\lambda_n+1)$.
\end{theorem}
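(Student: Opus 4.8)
The plan is to write the adjacency matrix of $G^*$ in a $2\times 2$ block form relative to the bipartition $(X,Y)$ and then exploit the symmetry of the off-diagonal blocks. First, order the vertices of $G^*$ as $x_1,\dots,x_n,y_1,\dots,y_n$. Since $G^*$ is bipartite with parts $X$ and $Y$, the two diagonal blocks vanish. The adjacency rule --- $x_i$ is adjacent to $y_j$ iff $i=j$ or $v_i$ is adjacent to $v_j$ in $G$ --- says exactly that the $(i,j)$ entry of the off-diagonal block is $1$ precisely when $i=j$ or $v_iv_j\in E(G)$. Hence that block is $A(G)+I_n=:B$, and
$$A(G^*)=\begin{pmatrix} 0 & B \\ B & 0\end{pmatrix}.$$
The one point requiring care is to notice that the diagonal contribution coming from the condition $i=j$ produces exactly the identity summand $I_n$; this is the source of the shift by $1$ in $\lambda_i+1$.

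Next I would determine the spectrum of $B$. Because $A(G)$ and $I_n$ commute and share a common orthonormal eigenbasis $\{u_i\}_{i=1}^{n}$, the eigenvalues of $B=A(G)+I_n$ are $\lambda_i+1$ for $i=1,\dots,n$, with the very same eigenvectors $u_i$ as $A(G)$.

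Finally, for each eigenpair $Bu_i=(\lambda_i+1)u_i$ I would form the two block vectors $(u_i,u_i)^{\top}$ and $(u_i,-u_i)^{\top}$ and check directly that they are eigenvectors of the block-antidiagonal matrix $A(G^*)$ with eigenvalues $+(\lambda_i+1)$ and $-(\lambda_i+1)$, respectively. Since these $2n$ vectors are linearly independent (they are built from the orthonormal basis $\{u_i\}$), they exhaust the spectrum, yielding precisely $\pm(\lambda_i+1)$, $i=1,\dots,n$, as claimed. Equivalently, one may compute $\det\bigl(xI_{2n}-A(G^*)\bigr)=\det(xI_n-B)\det(xI_n+B)$ using the fact that the blocks commute, which gives the same factorisation of the characteristic polynomial. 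There is essentially no real obstacle beyond correctly reading off the off-diagonal block as $A(G)+I_n$; once $B$ is symmetric, the $\pm$ pairing of the eigenvalues is automatic from the block-antidiagonal structure.
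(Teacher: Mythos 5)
Your proof is correct; note that the paper itself does not prove this statement but imports it from Chen \cite{c}, and your method --- writing $A(G^*)$ in block form with off-diagonal block $B=A(G)+I_n$ and then either exhibiting the eigenvectors $(u_i,\pm u_i)^{\top}$ or factoring $\det\bigl(xI_{2n}-A(G^*)\bigr)=\det(xI_n-B)\det(xI_n+B)$ --- is exactly the block-determinant technique the paper uses for the Laplacian analogue in Theorem 3.2. Nothing is missing.
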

\indent If $\lambda_1, \lambda_2, \cdots, \lambda_n$ is the $A$-spectra of the graph $G$, then by Lemma 2.1, the $A$-spectra of the graph $G\times K_2$ is $\lambda_i+1, \lambda_i-1$ for $1\leq i \leq n$. It is clear from Theorem 2.4, that the graphs $G\times K_2$ and $G^*$ are cospectral if and only if $G$ is bipartite \cite{c}. If $D^k[G]$ is the $k$-fold graph of the graph $G$, the $A$-spectra of $D^k[G]$ is given by the following result from \cite{ms}.
\begin{theorem}
If $\lambda_1, \lambda_2, \cdots, \lambda_n$ is the $A$-spectra of a graph $G$, then the $A$-spectra of the graph $D^k[G]$ is $k\lambda_1, k\lambda_2, \cdots, k\lambda_n, 0$ ($(k-1)n$ times).
\end{theorem}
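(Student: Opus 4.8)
The plan is to reduce the statement to Lemma 2.2 by exploiting the identity $D^k[G]=G\otimes T_k$ noted in the introduction, where $T_k$ is the complete graph $K_k$ with a loop attached at each vertex. First I would write down the adjacency matrix of $D^k[G]$ explicitly. Labelling the vertices as pairs $(v_i,p)$ with $i\in\{1,\dots,n\}$ indexing $G$ and $p\in\{1,\dots,k\}$ indexing the copy, the definition of the $k$-fold graph says that $(v_i,p)$ and $(v_j,q)$ are adjacent precisely when $v_i$ and $v_j$ are adjacent in $G$, for every choice of $p$ and $q$ (the case $p=q$ recovering the edges inside a copy and the case $p\neq q$ recovering the edges joining a vertex to the neighbours of its counterparts). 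Consequently every $k\times k$ block of $A(D^k[G])$ equals $A(G)$, so that $A(D^k[G])=J_k\otimes A(G)$, where $J_k$ is the all-ones matrix of order $k$; equivalently $A(T_k)=J_k$, and the claim becomes a statement about the Kronecker product $G\otimes T_k$.

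Next I would record the spectrum of the small factor. Since $J_k$ is the all-ones matrix of order $k$, its eigenvalues are $k$ with multiplicity one (eigenvector the all-ones vector) and $0$ with multiplicity $k-1$. Writing $\sigma_1=k$ and $\sigma_2=\cdots=\sigma_k=0$ for these eigenvalues, Lemma 2.2 gives the $A$-spectrum of $D^k[G]=G\otimes T_k$ as the list of all products $\lambda_i\sigma_j$ with $1\le i\le n$ and $1\le j\le k$. Collecting these products, the value $\sigma_1=k$ contributes the $n$ numbers $k\lambda_1,k\lambda_2,\dots,k\lambda_n$, while each of the $k-1$ zero eigenvalues $\sigma_2,\dots,\sigma_k$ contributes $0$ for every $i$, giving $0$ with multiplicity $n(k-1)=(k-1)n$. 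Together these account for all $nk$ eigenvalues and yield exactly the asserted spectrum.

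The only point requiring care is the presence of loops in $T_k$: Lemma 2.2 is phrased for simple graphs, so rather than apply it verbatim I would invoke the underlying matrix fact that the eigenvalues of a Kronecker product $B\otimes C$ are the products of the eigenvalues of $B$ and of $C$, applied here to $B=J_k$ and $C=A(G)$. This is routine, so I expect no genuine obstacle; the substantive observation is simply that each block of $A(D^k[G])$ is $A(G)$, which makes the Kronecker structure transparent and reduces everything to the elementary spectrum of $J_k$.
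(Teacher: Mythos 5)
Your proposal is correct. Note that the paper does not actually prove this statement: it is quoted as a known result from the reference [ms] (Marino and Salvi), so there is no internal proof to compare against. Your argument supplies a clean self-contained derivation, and it follows exactly the route the paper itself gestures at in the introduction, namely the identity $D^k[G]=G\otimes T_k$ with $A(T_k)=J_k$. The block description is right: $(v_i,p)$ and $(v_j,q)$ are adjacent in $D^k[G]$ precisely when $v_i\sim v_j$ in $G$, for all $p,q$, so $A(D^k[G])=J_k\otimes A(G)$, and since $J_k$ has eigenvalues $k$ (once) and $0$ ($k-1$ times), the Kronecker product eigenvalue rule gives $k\lambda_i$ for $1\le i\le n$ together with $0$ of multiplicity $(k-1)n$. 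Your caution about the loops in $T_k$ is well placed --- Lemma 2.2 as stated concerns simple graphs, and falling back on the underlying matrix fact that the eigenvalues of $B\otimes C$ are the pairwise products of those of $B$ and $C$ is exactly the right fix. No gaps.
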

\indent If $\lambda_1, \lambda_2, \cdots, \lambda_n$ is the $A$-spectra of the graph $G$, then by Theorem 2.4, the $A$-spectra of the graph $G^*$ is $\pm(\lambda_1+1), \pm(\lambda_2+1), \cdots, \pm(\lambda_n+1)$ and by Theorem 2.5, the $A$-spectra of $D^k[G]$ is $k\lambda_1, k\lambda_2, \cdots, k\lambda_n, 0$ ($(k-1)n$ times). Therefore,
\begin{align*} E(G^*)=\sum\limits_{i=1}^{n}|\lambda_i+1|+\sum\limits_{i=1}^{n}|-\lambda_i-1|=2\sum\limits_{i=1}^{n}|\lambda_i+1|,
\end{align*}
and
\begin{align*}
E(D^k[G])=\sum\limits_{i=1}^{n}|2\lambda_i|=2\sum\limits_{i=1}^{n}|\lambda_|=kE(G).
\end{align*}
\indent If $\lambda_1, \lambda_2, \cdots, \lambda_n$ is the $A$-spectra of a graph $G$, then the $A$-spectra of the graph $(G\otimes K_2)\times K_2$ is $\lambda_i+1, \lambda_i-1, -\lambda_i+1, -\lambda_i-1$, $1\leq i\leq n$. Therefore,
\begin{align*}
E((G\otimes K_2)\times K_2)=2\sum_{i=1}^{n}|\lambda_i+1|+2\sum_{i=1}^{n}|\lambda_i-1|=2\left(\sum_{i=1}^{n}|\lambda_i+1|+\sum_{i=1}^{n}|\lambda_i-1| \right)\\=2E(G\times K_2)=E(2(G\times K_2))=E((G\times K_2)\cup (G\times K_2)).
\end{align*}
\indent From the above discussion, we observe that the graphs $(G\otimes K_2)\times K_2$ and $(G\times K_2)\cup (G\times K_2)$ are equienergetic. Moreover, if the graph $G$ is a bipartite graph then the graphs $(G\otimes K_2)\times K_2$ and $G^*\cup G^*$ are also equienergetic graphs.\\
\indent As seen above $E(D^k[G])=k\sum_{i=1}^{n}|\lambda_i|=kE(G)=E(kG)$=$E(G\cup G\cup\cdots$ $k$ copies). This shows that the graphs $D^k[G]$ and $(G\cup G\cup\cdots$ $k$ copies) are non-cospectral equienergetic. However, we show for any graph $G$ the graphs $D[G]$ and $G\otimes K_2$ are always equienergetic non-cospectral graphs.
\begin{theorem}
If $D[G]$ is the double graph of the graph $G$, then the graphs $G\otimes K_2$ and $D[G]$ are non-cospectral equienergetic graphs.
\end{theorem}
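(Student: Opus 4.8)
\emph{Proof proposal.} The plan is to read off both $A$-spectra from the structural results already recorded, verify that the energies coincide, and then exhibit a single spectral discrepancy to rule out cospectrality. First I would apply Lemma 2.2 with second factor $K_2$: since the $A$-spectrum of $K_2$ is $\{1,-1\}$, Lemma 2.2 yields the $A$-spectrum of $G\otimes K_2$ as $\lambda_i,\,-\lambda_i$ for $1\le i\le n$. Next I would invoke Theorem 2.5 in the case $k=2$, using $D[G]=D^2[G]$, which gives the $A$-spectrum of $D[G]$ as $2\lambda_1,2\lambda_2,\dots,2\lambda_n$ together with $0$ repeated $n$ times.

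With both spectra explicit, the energy computation is immediate:
\begin{align*}
E(G\otimes K_2)=\sum_{i=1}^{n}|\lambda_i|+\sum_{i=1}^{n}|-\lambda_i|=2\sum_{i=1}^{n}|\lambda_i|=2E(G),
\end{align*}
and
\begin{align*}
E(D[G])=\sum_{i=1}^{n}|2\lambda_i|+n\cdot|0|=2\sum_{i=1}^{n}|\lambda_i|=2E(G).
\end{align*}
Both quantities equal $2E(G)$, so the two graphs are equienergetic.

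For non-cospectrality I would compare the two multisets directly. Writing $\lambda_1$ for the spectral radius of $G$, every eigenvalue of $G\otimes K_2$ satisfies $|\pm\lambda_i|\le\lambda_1$, so the largest eigenvalue of $G\otimes K_2$ is $\lambda_1$, whereas the largest eigenvalue of $D[G]$ is $2\lambda_1$. Provided $G$ has at least one edge we have $\lambda_1>0$, hence $2\lambda_1\ne\lambda_1$ and the two spectra cannot coincide. The only part of the statement carrying real content is this last step, since the energies agree by construction once the spectra are in hand. The spectral-radius comparison is the cleanest route; an alternative, useful when $G$ is nonsingular, is to compare the multiplicity of the eigenvalue $0$, which equals $n$ for $D[G]$ but equals twice the nullity of $G$ for $G\otimes K_2$, again forcing a discrepancy.
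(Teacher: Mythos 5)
Your proposal matches the paper's proof essentially verbatim: both read the spectrum of $G\otimes K_2$ from Lemma 2.2 and that of $D[G]$ from Theorem 2.5 with $k=2$, and both conclude each energy equals $2\sum_{i=1}^{n}|\lambda_i|$. Your spectral-radius argument for non-cospectrality (valid once $G$ has an edge; the edgeless case is a genuine degenerate exception the theorem's statement glosses over) simply fills in the detail where the paper writes only ``clearly these graphs are non-cospectral.''
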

\begin{proof}
Let $\lambda_1, \lambda_2, \cdots, \lambda_n$ be the eigenvalues of the graph $G$, then by Lemma 2.2, the eigenvalues of the graph $G\otimes K_2$ are $\lambda_i, -\lambda_i$ for $1\leq i\leq n$ and by Theorem 2.5 (for $k=2$), the eigenvalues of the graph $D[G]$ are $2\lambda_i,~~ 0$($n$ times) for $1\leq i\leq n$. Therefore,
\begin{align*}
E(G\otimes K_2)=\sum\limits_{i=1}^{n}|\lambda_i|+\sum\limits_{i=1}^{n}|-\lambda_i|=2\sum\limits_{i=1}^{n}|\lambda_i|.
\end{align*}
Also,
\begin{align*}
E(D[G])=\sum\limits_{i=1}^{n}|2\lambda_i|=2\sum\limits_{i=1}^{n}|\lambda_i|.
\end{align*}
\indent Clearly these graphs are non-cospectral, so the result follows.
\end{proof}
\indent In general, if $D^k[G]$ be the $k$-fold graph of the graph $G$, we have the following observation.
\begin{theorem}
if $D^k[G]$ is the $k$-fold graph of the graph $G$, then the graphs $D^k[G]$ and $G\otimes sK_2$ are non-cospectral equienergetic graphs if and only if $k=2^s$.
\end{theorem}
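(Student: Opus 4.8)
The plan is to read off both energies from spectra already computed and then compare them. By Theorem 2.5 the $A$-spectrum of $D^k[G]$ is $k\lambda_1, k\lambda_2, \cdots, k\lambda_n$ together with $0$ taken $(k-1)n$ times, where $\lambda_1, \cdots, \lambda_n$ is the $A$-spectrum of $G$; hence
\begin{align*}
E(D^k[G]) = \sum_{i=1}^{n}|k\lambda_i| = k\sum_{i=1}^{n}|\lambda_i| = kE(G).
\end{align*}
Here $G\otimes sK_2$ is understood as the $s$-fold conjunction $G\otimes K_2\otimes\cdots\otimes K_2$ with $s$ factors equal to $K_2$, which is well defined since $\otimes$ is associative.

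The main step is to establish $E(G\otimes sK_2)=2^sE(G)$, and I would do this by induction on $s$. The $A$-spectrum of $K_2$ is $\{1,-1\}$, so by Lemma 2.2 any graph $H$ with $A$-spectrum $\mu_1,\cdots,\mu_N$ yields $H\otimes K_2$ with $A$-spectrum $\pm\mu_1,\cdots,\pm\mu_N$; consequently $E(H\otimes K_2)=2E(H)$. Taking $H=G\otimes(s-1)K_2$ and invoking the induction hypothesis gives
\begin{align*}
E(G\otimes sK_2)=2\,E\big(G\otimes (s-1)K_2\big)=2\cdot 2^{s-1}E(G)=2^sE(G),
\end{align*}
the base case $s=1$ being $E(G\otimes K_2)=2E(G)$. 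Equivalently, one may compute the spectrum of $K_2\otimes\cdots\otimes K_2$ directly as $\pm1$, each with multiplicity $2^{s-1}$, and apply Lemma 2.2 once more.

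Comparing the two displays, the graphs $D^k[G]$ and $G\otimes sK_2$ are equienergetic exactly when $kE(G)=2^sE(G)$. Assuming $G$ carries at least one edge, so that $E(G)>0$, this is equivalent to $k=2^s$, which is the claimed equivalence. It remains to check that for $k=2^s$ the two graphs are really non-cospectral, so the conclusion is not vacuous; I would compare spectral radii. The largest eigenvalue of $D^{2^s}[G]$ is $2^s\lambda_1$, while the largest eigenvalue of $G\otimes sK_2$ equals $\lambda_1=\rho(G)$. Since $G$ has an edge we have $\lambda_1>0$, so $2^s\lambda_1\neq\lambda_1$ for $s\geq1$ and the two spectra differ.

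The only delicate point, and hence the main obstacle to a clean \emph{if and only if}, is the hypothesis $E(G)>0$: for an edgeless $G$ both energies vanish identically, so the \emph{only if} direction forces us to restrict to graphs with at least one edge. Once that is granted, the argument is a direct unwinding of Lemma 2.2 and Theorem 2.5 with no further computation.
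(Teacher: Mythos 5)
Your proof follows essentially the same route as the paper: both read off the spectrum of $D^k[G]$ from Theorem 2.5 and that of the $s$-fold Kronecker power $G\otimes sK_2$ from Lemma 2.2 (the paper states the multiplicities $2^{s-1}$ directly where you derive them by induction), and then compare $kE(G)$ with $2^sE(G)$. Your two added observations --- that the \emph{only if} direction requires $E(G)>0$, i.e.\ that $G$ has at least one edge, and the spectral-radius argument for non-cospectrality --- are correct refinements of points the paper leaves implicit.
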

\begin{proof}
If $\lambda_1, \lambda_2, \cdots, \lambda_n$ are the eigenvalues of the graph, then by Lemma 2.2, the eigenvalues of the graph $G\otimes sK_2$ are $\lambda_i$ ($2^{s-1}$ times), $-\lambda_i $($2^{s-1}$ times) for $1\leq i\leq n$ and by Theorem 2.5, the eigenvalues of the graph $D^k[G]$ are $k\lambda_i,~~ 0$ ($(k-1)n$ times) for $1\leq i\leq n$. Therefore,
\begin{align}
E(G\otimes sK_2)=2^{s-1}\sum\limits_{i=1}^{n}|\lambda_i|+2^{s-1}\sum\limits_{i=1}^{n}|-\lambda_i|=2^s\sum\limits_{i=1}^{n}|\lambda_i|.
\end{align}
Also,
\begin{align}
E(D^k[G])=\sum\limits_{i=1}^{n}|k\lambda_i|=k\sum\limits_{i=1}^{n}|\lambda_i|.
\end{align}
From (2.1) and (2.2) it is clear that $E(G\otimes s K_2)=E(D^k[G])$ if and only if $k=2^s$.
\end{proof}
\indent Let $G^{**}$ be the extended double cover of the graph $G^*$. We have the following result.
\begin{theorem}
If $G$ is an $n$-vertex graph, then $E(G^*\otimes K_2)=E(G^{**})$, if $|\lambda_i|\geq 2$, for all non-zero eigenvalues of $G$. Moreover these graphs are non-cospectral with equal number of vertices.
\end{theorem}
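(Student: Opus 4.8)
The plan is to compute the $A$-spectra of both graphs explicitly from the tools already established, reduce the asserted energy identity to a term-by-term comparison of absolute values, and then finish with an elementary triangle-inequality argument in which the hypothesis enters. First I would apply Theorem 2.4 twice. Since the $A$-spectrum of $G^*$ is $\{\pm(\lambda_i+1)\}_{i=1}^n$, a second application of Theorem 2.4 to $G^*$ produces the $A$-spectrum of $G^{**}=(G^*)^*$: the eigenvalues $\lambda_i+1$ of $G^*$ contribute $\pm(\lambda_i+2)$, and the eigenvalues $-(\lambda_i+1)$ contribute $\pm\lambda_i$, for $1\le i\le n$. Hence
\[
E(G^{**})=2\sum_{i=1}^n|\lambda_i+2|+2\sum_{i=1}^n|\lambda_i|.
\]
Combining Theorem 2.4 with Lemma 2.2 (using that $K_2$ has eigenvalues $\pm1$), the $A$-spectrum of $G^*\otimes K_2$ consists of each of $\pm(\lambda_i+1)$ taken twice, so
\[
E(G^*\otimes K_2)=4\sum_{i=1}^n|\lambda_i+1|.
\]

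Equating the two expressions reduces the claim to the identity $2\sum_i|\lambda_i+1|=\sum_i\bigl(|\lambda_i+2|+|\lambda_i|\bigr)$, which I would establish term by term. Writing $2|\lambda_i+1|=\bigl|\lambda_i+(\lambda_i+2)\bigr|$, the triangle inequality gives $2|\lambda_i+1|\le|\lambda_i|+|\lambda_i+2|$, with equality exactly when $\lambda_i$ and $\lambda_i+2$ lie on the same side of $0$, that is when $\lambda_i\ge0$ or $\lambda_i\le-2$; equality fails only on the interval $\lambda_i\in(-2,0)$. The hypothesis that every non-zero eigenvalue satisfies $|\lambda_i|\ge2$ forces each $\lambda_i$ to be either $0$, or $\ge2$, or $\le-2$, so each term is an equality and summing yields $E(G^*\otimes K_2)=E(G^{**})$.

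For the remaining assertions I would count vertices and compare spectral radii. Both $G^*\otimes K_2$ and $G^{**}$ have $4n$ vertices, since $G^*$ has $2n$ vertices and both the Kronecker product with $K_2$ and the extended double cover double the vertex count. For the spectra, using $\lambda_1\ge|\lambda_i|$ for all $i$ (so $\lambda_1$ is the spectral radius of $G$), one checks that the largest eigenvalue of $G^{**}$ is $\lambda_1+2$, whereas that of $G^*\otimes K_2$ is $\lambda_1+1$; as these differ, the two graphs cannot be cospectral.

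The main obstacle is genuinely just the term-by-term absolute-value comparison, and this is precisely where the hypothesis $|\lambda_i|\ge2$ is used; the triangle-inequality viewpoint makes both the sufficiency of the hypothesis and the exact failure set $(-2,0)$ transparent. The only point requiring a little care is verifying that $\lambda_1+2$ and $\lambda_1+1$ really are the spectral radii of the respective graphs, i.e. that no eigenvalue arising from a very negative $\lambda_i$ overtakes them; this follows immediately from $|\lambda_i|\le\lambda_1$, so that $|\lambda_n|-2\le\lambda_1-2<\lambda_1+2$ and $|\lambda_n|-1\le\lambda_1-1<\lambda_1+1$.
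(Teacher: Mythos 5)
Your proposal is correct and follows essentially the same route as the paper: both compute the spectra of $G^{**}$ and $G^*\otimes K_2$ by iterating Theorem 2.4 and applying Lemma 2.2, and then verify the energy identity using the hypothesis $|\lambda_i|\geq 2$; your term-by-term reduction via $2|\lambda_i+1|=|\lambda_i+(\lambda_i+2)|\leq|\lambda_i|+|\lambda_i+2|$ with equality off $(-2,0)$ is just a cleaner packaging of the paper's sign-split computation, which shows both energies equal $4\sum_{i=1}^{n}|\lambda_i|+4\theta$. You also supply an explicit spectral-radius argument ($\lambda_1+2$ versus $\lambda_1+1$) for non-cospectrality, which the paper simply asserts as clear.
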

\begin{proof}
Let $\lambda_1, \lambda_2, \cdots, \lambda_n$ be the eigenvalues of the graph $G$. By Theorem 2.4, the eigenvalues of the graph $G^*$ are $\lambda_i+1, -(\lambda_i+1)$ for $1\leq i\leq n$ and so of $G^{**}$ are $\lambda_i+2, \lambda_i, -(\lambda_i+2), -\lambda_i$ for $ 1\leq i\leq n$. Also by Lemma 2.2, the eigenvalues of the graph $G^*\otimes K_2$ are $\lambda_i+1, -(\lambda_i+1), \lambda_i+1, -(\lambda_i+1)$ for $ 1\leq i\leq n$. Assume that $|\lambda_i|\geq 2$. Then\\
$$|\lambda_i+1|=\left \{\begin{array}{lr}|\lambda_i|+1, &\mbox{if $\lambda_i\geq 0$}\\
|\lambda_i|-1, &\mbox{if $\lambda_i< 0$}
\end{array} \right.,~~~ |\lambda_i+2|=\left \{\begin{array}{lr}|\lambda_i|+2, &\mbox{if $\lambda_i\geq 0$}\\
|\lambda_i|-2, &\mbox{if $\lambda_i< 0.$}
\end{array} \right.$$
Therefore,
\begin{align*}
E(G^{**})&=2\sum\limits_{i=1}^{n}|\lambda_i+2|+2\sum\limits_{i=1}^{n}|\lambda_i|
=2\left (\sum\limits_{\lambda_i\geq 0}|\lambda_i+2|+\sum\limits_{\lambda_i< 0}|\lambda_i+2|+\sum\limits_{i=1}^{n}|\lambda_i| \right)\\&=2\left (\sum\limits_{\lambda_i\geq 0}|\lambda_i|+2+\sum\limits_{\lambda_i< 0}|\lambda_i|-2+\sum\limits_{i=1}^{n}|\lambda_i| \right)\\&=2\left(\sum\limits_{i=1}^{n}|\lambda_i|+\sum\limits_{i=1}^{n}|\lambda_i|+2(\sum\limits_{\lambda_i\geq 0}1-\sum\limits_{\lambda_i< 0}1) \right)\\&=4\sum\limits_{i=1}^{n}|\lambda_i|+4\theta,
\end{align*}
where $\theta$ is the difference between the number of non-negative and negative eigenvalues of $G$ and
\begin{align*}
 E(G^*\otimes K_2)&=2\left (\sum\limits_{i=1}^{n}|\lambda_i+1|+\sum\limits_{i=1}^{n}|-(\lambda_i+1)| \right)=4\sum\limits_{i=1}^{n}|\lambda_i+1|\\&=4\left (\sum\limits_{\lambda_i\geq 0}|\lambda_i+1|+\sum\limits_{\lambda_i< 0}|\lambda_i+1| \right)=4\left (\sum\limits_{\lambda_i\geq 0}|\lambda_i|+1+\sum\limits_{\lambda_i< 0}|\lambda_i|-1| \right)\\&=4\sum\limits_{i=1}^{n}|\lambda_i|+4\left (\sum\limits_{\lambda_i\geq 0}1-\sum\limits_{\lambda_i< 0}1 \right)=4\sum\limits_{i=1}^{n}|\lambda_i|+4\theta.
 \end{align*}
\indent  Clearly these graphs are non-cospectral with same number of vertices.
\end{proof}
\indent Let $G$ be a bipartite graph. It is well known that the spectra of $G$ is symmetric about the origin, so half of the non-zero eigenvalues of $G$ lies to the left and half lies to the right of origin. Therefore if $G$ is a bipartite graph having all its eigenvalues non-zero, the number of positive and negative eigenvalues of $G$ are same. Keeping this in mind we have the following result.
\begin{theorem}
If $G^*$ is the extended double cover of the bipartite graph $G$, then the graphs $G^*$ and $D[G]$ are non-cospectral equienergetic if and only if $|\lambda_i|\geq 1$ for all $ 1\leq i\leq n$.
\end{theorem}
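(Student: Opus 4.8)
The plan is to read both spectra off the earlier theorems, reduce the equienergetic condition to a single scalar comparison, and then exploit the symmetry of a bipartite spectrum. First I would apply Theorem 2.4 to write the $A$-spectrum of $G^*$ as $\pm(\lambda_i+1)$ for $1\le i\le n$, which gives $E(G^*)=2\sum_{i=1}^{n}|\lambda_i+1|$, and Theorem 2.5 with $k=2$ to write the spectrum of $D[G]$ as $2\lambda_1,\dots,2\lambda_n$ together with $0$ ($n$ times), which gives $E(D[G])=2\sum_{i=1}^{n}|\lambda_i|$. Since both graphs have $2n$ vertices, the whole equienergetic statement then reduces to deciding exactly when $\sum_{i=1}^{n}|\lambda_i+1|=\sum_{i=1}^{n}|\lambda_i|$.

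The key step, and the only place where the bipartiteness of $G$ is used, is the symmetry of the spectrum about the origin: the multiset $\{\lambda_i\}$ equals $\{-\lambda_i\}$. Substituting $\lambda_i\mapsto-\lambda_i$ in the first sum shows $\sum_i|\lambda_i+1|=\sum_i|\lambda_i-1|$, so that $2\sum_i|\lambda_i+1|=\sum_i\bigl(|\lambda_i+1|+|\lambda_i-1|\bigr)$. Using the elementary identity $|\lambda+1|+|\lambda-1|=2\max(|\lambda|,1)$, valid for every real $\lambda$, this collapses to the closed form $E(G^*)=2\sum_{i=1}^{n}\max(|\lambda_i|,1)$. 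Comparing with $E(D[G])=2\sum_{i=1}^{n}|\lambda_i|$ and noting that $\max(|\lambda_i|,1)\ge|\lambda_i|$ termwise, I obtain $E(G^*)\ge E(D[G])$, with equality precisely when $\max(|\lambda_i|,1)=|\lambda_i|$ for every $i$, i.e. when $|\lambda_i|\ge1$ for all $1\le i\le n$. This is exactly the asserted criterion, and I regard the symmetrization identity $E(G^*)=2\sum_i\max(|\lambda_i|,1)$ as the genuine content of the theorem.

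It remains to address the word \emph{non-cospectral}, and this is where I expect the only real care to be needed. The cleanest route is to compare spectral radii (or the multiplicity of the eigenvalue $0$): the spectral radius of $D[G]$ is $2\lambda_1$, while that of $G^*$ is $\lambda_1+1$, and these differ whenever $\lambda_1>1$, so the two graphs are non-cospectral in that case. The delicate configuration is the boundary $\lambda_1=1$, since combined with $|\lambda_i|\ge1$ it forces the entire spectrum to consist of $\pm1$, meaning $G$ is a disjoint union of edges; this degenerate case should be examined separately, as it is the one situation in which the spectra actually coincide. For the generic graphs to which the theorem is meant to apply, the spectral-radius comparison settles non-cospectrality at once, and the substantive assertion is the equienergetic equivalence established by the $\max(|\lambda_i|,1)$ identity above.
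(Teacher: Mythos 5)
Your argument is correct and, on the converse direction, genuinely cleaner than the paper's. The paper's ``if'' direction is essentially yours in disguise: it splits $|\lambda_i+1|$ according to the sign of $\lambda_i$ and invokes the fact that a bipartite graph with no zero eigenvalue has equally many positive and negative eigenvalues, so the $\pm1$ contributions cancel and $E(G^*)=2\sum_i|\lambda_i|$. For the ``only if'' direction, however, the paper runs a four-case contradiction argument (according to the signs of the eigenvalues with $|\lambda_i|\geq 1$ and of those with $|\lambda_i|<1$), which is long and does not transparently cover mixed sign configurations. Your symmetrization --- using the multiset identity $\{\lambda_i\}=\{-\lambda_i\}$ to get $\sum_i|\lambda_i+1|=\sum_i|\lambda_i-1|$ and then $|\lambda+1|+|\lambda-1|=2\max(|\lambda|,1)$ --- yields the closed form $E(G^*)=2\sum_i\max(|\lambda_i|,1)$, from which $E(G^*)\geq E(D[G])$ with equality precisely when $|\lambda_i|\geq 1$ for all $i$; both implications fall out of a single termwise inequality. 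You are also right to worry about the word \emph{non-cospectral}, which the paper dismisses with a bare ``clearly'': in the boundary case $\lambda_1=1$, the hypothesis $|\lambda_i|\geq 1$ forces $G=\frac{n}{2}K_2$, and then $G^*$ and $D[G]$ are both isomorphic to $\frac{n}{2}C_4$, hence cospectral --- so the theorem as stated actually needs this case excluded, and your spectral-radius comparison $2\lambda_1\neq\lambda_1+1$ settles every other case. The only polish I would ask for is to state that exclusion explicitly rather than leaving it as a case ``to be examined separately.''
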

\begin{proof}
Let $\lambda_1, \lambda_2, \cdots, \lambda_n$ be the eigenvalues of the graph $G$. By Theorem 2.4, the eigenvalues of the graph $G^*$ are $\lambda_i+1, -\lambda_i-1$ for $ 1\leq i\leq n$ and by Theorem 2.5, the eigenvalues of the graph $D[G]$ are $2\lambda_i,~~ 0 $($n$ times) for $1\leq i\leq n$. Suppose that $|\lambda_i|\geq 1$ for $i=1,2,\cdots,n,$ then
$$|\lambda_i+1|=\left \{\begin{array}{lr}|\lambda_i|+1, &\mbox{if $\lambda_i> 0$}\\
|\lambda_i|-1, &\mbox{if $\lambda_i< 0.$}
\end{array} \right.$$
Therefore,
\begin{align*}
E(G^*)&=\sum\limits_{i=1}^{n}|\lambda_i+1|+\sum\limits_{i=1}^{n}|-\lambda_i-1|=2\sum\limits_{i-1}^{n}|\lambda_i+1|\\&
=2\left (\sum\limits_{\lambda_i> 0}|\lambda_i+1|+\sum\limits_{\lambda_i <0}|\lambda_i+1| \right)
=2\left (\sum\limits_{\lambda_i> 0}(|\lambda_i|+1)+\sum\limits_{\lambda_i <0}(|\lambda_i|-1) \right)\\&
=2\left ((\sum\limits_{\lambda_i> 0}|\lambda_i|+\sum\limits_{\lambda_< 0}|\lambda_i|)+(\sum\limits_{\lambda_i> 0}1-\sum\limits_{\lambda_< 0}1) \right)=2\sum\limits_{i=1}^{n}|\lambda_i|=E(D[G]).
\end{align*}
\indent\indent Clearly these graphs are non-cospectral with same number of vertices.\\
Conversely, suppose that the graphs $G^*$ and $D[G]$ are non-cospectral equienergetic. We will show that $|\lambda_i|\geq 1$ for all $1\leq i\leq n$.\\
\indent Assume to the contrary that $|\lambda_i|< 1$ for some $i$. Then for this $i$, $|\lambda_i+1|=\lambda_i+1$. Without loss of generality, suppose that the eigenvalues of $G$ satisfy $|\lambda_i|\geq 1$, for $i=1,2,\cdots,k $ and $|\lambda_i|< 1$, for $i=k+1,k+2,\cdots,n$, since the eigenvalues are real and reordering does not effect the argument. We have the following cases to consider.\\
{\it Case (i)}. If $\lambda_i> 0$ for $ i=1,2,\cdots,k$ and $\lambda_i\geq 0$ for $ i=k+1,k+2,\cdots,n$, then
\begin{align*}
E(G^*)=2\left(\sum\limits_{i=1}^{k}|\lambda_i+1|+\sum\limits_{i=k+1}^{n}|\lambda_i+1| \right)=2\left(\sum\limits_{i=1}^{n}|\lambda_i|+n \right).
\end{align*}
{\it Case (ii)}. If $\lambda_i> 0$ for $ i=1,2,\cdots,k$ and $\lambda_i\leq 0$ for $ i=k+1,k+2,\cdots,n$, then if $\theta_0$ is the number of zero eigenvalues of $G$, we have
\begin{align*}
E(G^*)&=2\left(\sum\limits_{i=1}^{k}|\lambda_i+1|+\sum\limits_{i=k+1}^{n}|\lambda_i+1| \right)=2\left(\sum\limits_{i=1}^{k}(|\lambda_i|+1)+\sum\limits_{i=k+1}^{n}(\lambda_i+1) \right).\\&
> 2\left(\sum\limits_{i=1}^{k}(|\lambda_i|+1)+\sum\limits_{i=k+1}^{n}(|\lambda_i|-1) \right)=2\left(\sum\limits_{i=1}^{n}|\lambda_i|-\theta_0 \right) .
\end{align*}
{\it Case (iii)}. If $\lambda_i< 0$ for $ i=1,2,\cdots,k$ and $\lambda_i\geq 0$ for $ i=k+1,k+2,\cdots,n$, then
\begin{align*}
E(G^*)&=2\left(\sum\limits_{i=1}^{k}|\lambda_i+1|+\sum\limits_{i=k+1}^{n}|\lambda_i+1| \right)=2\left(\sum\limits_{i=1}^{k}(|\lambda_i|-1)+\sum\limits_{i=k+1}^{n}(|\lambda_i|+1) \right).\\&
=2\left(\sum\limits_{i=1}^{n}|\lambda_i|+\theta_0 \right).
\end{align*}
{\it Case (iv)}. If $\lambda_i< 0$ for $ i=1,2,\cdots,k$ and $\lambda_i\leq 0$ for $ i=k+1,k+2,\cdots,n$, then
\begin{align*}
E(G^*)&=2\left(\sum\limits_{i=1}^{k}|\lambda_i+1|+\sum\limits_{i=k+1}^{n}|\lambda_i+1| \right)=2\left(\sum\limits_{i=1}^{k}(|\lambda_i|-1)+\sum\limits_{i=k+1}^{n}(\lambda_i+1) \right).\\&
> 2\left(\sum\limits_{i=1}^{k}(|\lambda_i|-1)+\sum\limits_{i=k+1}^{n}(|\lambda_i|-1) \right)=2\left(\sum\limits_{i=1}^{n}|\lambda_i|-n \right) .
\end{align*}
Clearly in all these cases, we obtain $E(G^*)\neq E(D[G])$, a contradiction. Therefore the result follows.
\end{proof}
\indent We can also prove Theorem 2.9 by using Theorem 2.6 , the fact that the graphs $G^*$ and $G\times K_2$ are cospectral if $G$ is bipartite (Theorem 2 in \cite{c}) and the graphs $G\times K_2$ and $G\otimes K_2$ are equienergetic if an only if $|\lambda_i|\geq 1$ (Theorem 8 in \cite{bva}).

\section{The Laplacian spectra of $G^{k*}$}

Let $G^*$ be the extended double cover of the graph $G$, define $G^{**}=(G^*)^*$, and in general $G^{k*}=(G^{(k-1)*})^*$, $k\geq 1$, called the $k$-th iterated double cover graph of $G$. The $A$-spectra of $G^{k*}$ is given in \cite{c}. Here we obtain the $L$-spectra of the $k$-th iterated extended double cover $G^{k*}$ of the graph $G$. Since the graph $G^{k*}$ is always bipartite for $k\geq 1$, therefore its Laplacian ($L$-spectra) and signless Laplacian ($Q$-spectra) spectra are same.\\
\indent For any complex square matrices $A$ and $B$ of same order, the following observation can be seen in (\cite{fz} p.no.{41}).
\begin{theorem}
If $A$ and $B$ are complex square matrices of same order, then
$$\quad
\begin{vmatrix}
A & B\\
B & A
\end{vmatrix}
=|A+B||A-B|$$
\indent where the symbol $| |$ denotes the determinant of a matrix.
\end{theorem}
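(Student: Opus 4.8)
The plan is to prove this determinant identity by exhibiting an explicit block row-and-column operation that triangularizes the $2\times 2$ block matrix, after which the determinant factors. The key observation is that adding the second block row to the first and then manipulating columns should decouple the two blocks into the sum $A+B$ and the difference $A-B$.

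First I would perform the block row operation of adding the bottom block row to the top block row. This transforms the matrix $\begin{pmatrix} A & B \\ B & A \end{pmatrix}$ into $\begin{pmatrix} A+B & A+B \\ B & A \end{pmatrix}$, and since this is an elementary row operation built from adding one block to another it leaves the determinant unchanged. Next I would perform the block column operation of subtracting the first block column from the second block column, which again preserves the determinant; this sends the matrix to $\begin{pmatrix} A+B & 0 \\ B & A-B \end{pmatrix}$, a block lower-triangular matrix. The determinant of a block triangular matrix whose diagonal blocks are square of the same order equals the product of the determinants of the diagonal blocks, giving exactly $|A+B|\,|A-B|$.

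The one point requiring care — and the step I expect to be the main obstacle — is justifying that these block operations behave like genuine elementary operations at the level of determinants when $A$ and $B$ need not commute. For the determinant of a block triangular matrix the formula $\det\begin{pmatrix} P & 0 \\ R & S\end{pmatrix}=\det(P)\det(S)$ holds unconditionally for square blocks $P,S$ of equal size, regardless of commutativity, so no hypothesis beyond "same order" is needed. Likewise, the block row and block column additions correspond to left- and right-multiplication by block matrices of the form $\begin{pmatrix} I & I \\ 0 & I\end{pmatrix}$ and $\begin{pmatrix} I & -I \\ 0 & I\end{pmatrix}$, each of which has determinant $1$, so the determinant is genuinely unchanged at each stage. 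An alternative, entirely equivalent route would be to factor directly as
\begin{align*}
\begin{pmatrix} A & B \\ B & A \end{pmatrix}
=\begin{pmatrix} I & I \\ 0 & I\end{pmatrix}^{-1}
\begin{pmatrix} A+B & 0 \\ B & A-B \end{pmatrix}
\begin{pmatrix} I & -I \\ 0 & I\end{pmatrix},
\end{align*}
and take determinants using multiplicativity together with the block triangular formula; since both flanking factors have determinant $1$, the identity $|A+B|\,|A-B|$ follows immediately.
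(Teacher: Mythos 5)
Your argument is correct and complete. Note that the paper itself offers no proof of this statement at all: it is quoted as a known fact from Zhang's \emph{Matrix Theory} (p.~41), so there is nothing internal to compare against. Your block-elimination proof is the standard self-contained argument, and the two points you isolate as needing care are exactly the right ones: the block lower-triangular determinant formula holds unconditionally for square diagonal blocks, and the row/column additions are realized by unimodular block matrices, so no commutativity of $A$ and $B$ is ever used.

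One small slip in your closing ``alternative route'': the displayed factorization is not quite right as written. Carrying out the two operations gives
$$\begin{pmatrix} I & I \\ 0 & I\end{pmatrix}\begin{pmatrix} A & B \\ B & A \end{pmatrix}\begin{pmatrix} I & -I \\ 0 & I\end{pmatrix}=\begin{pmatrix} A+B & 0 \\ B & A-B \end{pmatrix},$$
so the correct identity is
$$\begin{pmatrix} A & B \\ B & A \end{pmatrix}=\begin{pmatrix} I & I \\ 0 & I\end{pmatrix}^{-1}\begin{pmatrix} A+B & 0 \\ B & A-B \end{pmatrix}\begin{pmatrix} I & -I \\ 0 & I\end{pmatrix}^{-1},$$
whereas you placed $\begin{pmatrix} I & -I \\ 0 & I\end{pmatrix}$ itself, rather than its inverse $\begin{pmatrix} I & I \\ 0 & I\end{pmatrix}$, as the right-hand factor; multiplying out your version yields $\begin{pmatrix} A & -2A+B \\ B & A-2B \end{pmatrix}$, not the original matrix. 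Since all of these flanking factors have determinant $1$, the determinant identity you conclude is unaffected, and your primary argument via row and column operations is already airtight without this remark.
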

\indent We first obtain the $L$-spectra of $G^*$, the extended double cover of the graph $G$, in the following result.
\begin{theorem}
Let $G(n,m)$ be an $n$-vertex graph having Laplacian and signless Laplacian spectra, respectively as $0=\mu_n<\mu_{n-1}\leq\cdots\leq\mu_1$ and $0<\mu^+_n<\mu_{n-1}^+\leq\cdots\leq\mu_1^+$. Then the Laplacian spectra of $G^*$ is $\mu_1, \mu_2, \cdots, \mu_n,~~ \mu_1^{+}+2, \mu_2^{+}+2, \cdots, \mu_n^{+}+2$.
\end{theorem}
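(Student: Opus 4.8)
The plan is to write the Laplacian of $G^*$ as a $2\times2$ block matrix and reduce its characteristic determinant using the identity of Theorem 3.1.

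First I would record the block structure. Ordering the vertices of $G^*$ as $x_1,\dots,x_n,y_1,\dots,y_n$, the adjacency rule ($x_i\sim y_j$ iff $i=j$ or $v_i\sim v_j$) says that the bipartite adjacency block between $X$ and $Y$ is exactly $A+I$, where $A=A(G)$ and $I=I_n$. Hence
$$A(G^*)=\begin{pmatrix} 0 & A+I\\ A+I & 0\end{pmatrix}.$$
Next, since a vertex of degree $d_i$ in $G$ has degree $d_i+1$ in $G^*$ (a fact already noted in the introduction), the degree matrix of $G^*$ is block-diagonal with both diagonal blocks equal to $D+I$, where $D=D(G)$. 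Therefore
$$L(G^*)=D(G^*)-A(G^*)=\begin{pmatrix} D+I & -(A+I)\\ -(A+I) & D+I\end{pmatrix}.$$

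The key step is to compute $\det\!\big(L(G^*)-xI_{2n}\big)$. This matrix has the form $\left(\begin{smallmatrix} M & N\\ N & M\end{smallmatrix}\right)$ with $M=(D+I)-xI$ and $N=-(A+I)$, so Theorem 3.1 applies and gives
$$\det\!\big(L(G^*)-xI_{2n}\big)=\det(M+N)\,\det(M-N).$$
It then remains to identify the two factors. For $M+N$ I would compute $(D+I)-xI-(A+I)=(D-A)-xI=L(G)-xI$, whose roots are the Laplacian eigenvalues $\mu_1,\dots,\mu_n$ of $G$. For $M-N$ I would compute $(D+I)-xI+(A+I)=(D+A)-(x-2)I=L^{+}(G)-(x-2)I$, whose roots are the values $x=\mu_i^{+}+2$, where $\mu_1^{+},\dots,\mu_n^{+}$ are the signless Laplacian eigenvalues of $G$.

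Collecting the two sets of roots yields the claimed $L$-spectrum $\mu_1,\dots,\mu_n,\ \mu_1^{+}+2,\dots,\mu_n^{+}+2$ of $G^*$. The only point needing genuine care is the block decomposition itself, namely that the off-diagonal block is precisely $A+I$ (the ``$+I$'' coming from the $i=j$ adjacencies) and that every degree increases by exactly one; once these are in hand, the rest is a formal application of Theorem 3.1. Finally, since $G^*$ is bipartite, its Laplacian and signless Laplacian spectra coincide, so the same list simultaneously describes the $Q$-spectrum of $G^*$.
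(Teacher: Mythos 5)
Your proposal is correct and follows essentially the same route as the paper: the same block decomposition of $A(G^*)$, $D(G^*)$ and $L(G^*)$, followed by the determinant identity of Theorem 3.1 to factor the characteristic polynomial as $C_G(\lambda)\,Q_G(\lambda-2)$. The only cosmetic difference is that you expand $\det(L(G^*)-xI)$ rather than $\det(\lambda I-L(G^*))$, which changes nothing since the matrix has even order.
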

\begin{proof}
Let $A(G)$ be the adjacency matrix of the graph $G$. By a suitable relabelling of vertices it can be seen that the adjacency matrix $A(G^*)$ of the graph $G^*$ is
 $$A(G^*)=\quad
\begin{pmatrix}
0 & A(G)+I_n \\
A(G)+I_n & 0
\end{pmatrix}.$$
Let $D(G)$ and $D(G^*)$ be respectively the degree matrices of the graphs $G$ and $G^*$. It is easy to see that
$$D(G^*)=\quad
\begin{pmatrix}
D(G)+I_n & 0 \\
0 & D(G)+I_n
\end{pmatrix}.$$
Therefore, Laplacian matrix $L(G^*)$ of $G^*$ is $$L(G^*)=D(G^*)-A(G^*)=\quad
\begin{pmatrix}
D(G)+I_n & -(A(G)+I_n) \\
-(A(G)+I_n) & D(G)+I_n
\end{pmatrix}.$$
So the Laplacian characteristic polynomial of $G^*$ is $$C_{G^*}(\lambda)= |\lambda I_{2n}-L(G^*)|=
\quad
\begin{vmatrix}
(\lambda-1)I_n-D(G) & A(G)+I_n \\
A(G)+I_n & (\lambda-1)I_n-D(G)
\end{vmatrix}$$
$$=\left |((\lambda-1)I_n-D(G))-(A(G)+I_n)\right |\left |((\lambda-1)I_n-D(G))+(A(G)+I_n) \right|$$
$$=\left |(\lambda-2)I_n-(D(G)+A(G)) \right| \left|\lambda I_n-(D(G)-A(G)) \right|$$
$$= Q_G(\lambda-2) C_G(\lambda).$$
\indent From this the result follows.
\end{proof}

\indent We now obtain the $L$-spectra of $G^{k*}$ as follows.

\begin{theorem}
Let $G(n,m)$ be a graph having $L$-spectra $\mu_i$, and $Q$-spectra $\mu_i^+$, $1 \leq i \leq n$. The $L$-spectra of the graph $G^{k*}$ is  $\mu_i ~~({k \choose 0}$ times), $\mu_i+2 ~~({k-1 \choose 1}$ times), $\mu_i^{+}+2 ~~({k-1 \choose 0}$ times), $\mu_i+4 ~~({k-1 \choose 2}$ times), $\mu^{+}_i+4 ~~({k-1 \choose 1}$ times), $\cdots$, $\mu_i+2(k-2) ~~({k-1 \choose k-2}$ times), $\mu_i^{+}+2(k-2) ~~({k-1 \choose k-3}$ times), $\mu_i+2(k-1)~~({k-1 \choose k-1}$ times), $\mu_i^{+}+2(k-1)~~({k-1 \choose k-2}$ times), $\mu_i^{+}+2k~~({k \choose k}$ times), where $1\leq i \leq n$.
\end{theorem}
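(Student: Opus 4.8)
The plan is to argue by induction on $k$, using Theorem 3.3 as the single-step tool and invoking the bipartiteness of the iterated covers to make the recursion self-contained. For the base case $k=1$, Theorem 3.3 gives directly that the $L$-spectra of $G^*$ consists of $\mu_i$ and $\mu_i^{+}+2$ for $1\le i\le n$, which is exactly the asserted list when $k=1$, since then every binomial multiplicity collapses to $\binom{0}{0}=1$.

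For the inductive step I would write $G^{k*}=(G^{(k-1)*})^*$ and apply Theorem 3.3 to $H:=G^{(k-1)*}$. This expresses the $L$-spectra of $G^{k*}$ as the $L$-spectra of $H$ together with the $Q$-spectra of $H$ each increased by $2$. The crucial observation is that for $k\ge 2$ the graph $H=G^{(k-1)*}$ is bipartite, so (as noted at the start of this section) its $L$-spectra and $Q$-spectra coincide; hence the recursion closes purely in terms of the $L$-spectra of $H$:
\[
\operatorname{spec}_L\!\left(G^{k*}\right)=\operatorname{spec}_L\!\left(G^{(k-1)*}\right)\,\cup\,\Bigl(\operatorname{spec}_L\!\left(G^{(k-1)*}\right)+2\Bigr).
\]
Substituting the inductive hypothesis, the eigenvalues split into two families evolving independently: those of the form $\mu_i+2j$ and those of the form $\mu_i^{+}+2j$. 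Within each family the existing copies and the copies shifted by $2$ combine via Pascal's identity $\binom{k-2}{j}+\binom{k-2}{j-1}=\binom{k-1}{j}$, and the two families differ only in their index range (the $\mu_i^{+}$ family starts at $+2$ rather than $+0$, inherited from the $Q$-shift in the base case). A short check of the boundary terms, using $\binom{k}{0}=\binom{k-1}{0}$ and $\binom{k}{k}=\binom{k-1}{k-1}$ for the single end copies $\mu_i$ and $\mu_i^{+}+2k$, then reproduces exactly the stated multiplicities.

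Equivalently, and perhaps more transparently, one can track the Laplacian characteristic polynomial $P_k(\lambda):=C_{G^{k*}}(\lambda)$. The base case is $P_1(\lambda)=Q_G(\lambda-2)\,C_G(\lambda)$ from the proof of Theorem 3.3, and bipartiteness of $H$ (which forces $Q_H=C_H$) turns the one-step factorization into $P_k(\lambda)=P_{k-1}(\lambda)\,P_{k-1}(\lambda-2)$ for $k\ge 2$. Unrolling this recurrence gives the closed form $P_k(\lambda)=\prod_{j=0}^{k-1}P_1(\lambda-2j)^{\binom{k-1}{j}}$, and reading off the zeros of $P_1(\lambda-2j)=Q_G(\lambda-2j-2)\,C_G(\lambda-2j)$ yields the eigenvalues $\mu_i+2j$ and $\mu_i^{+}+2(j+1)$, each with multiplicity $\binom{k-1}{j}$, which is precisely the assertion.

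I expect the only genuine obstacle to be bookkeeping: keeping the two families $\mu_i+2j$ and $\mu_i^{+}+2j$ separate, matching their index ranges, and handling the end terms so that the Pascal recombination lands on the advertised binomial coefficients. The conceptual content is light; everything hinges on invoking the bipartiteness of $G^{(k-1)*}$ so that the $Q$-spectra may be replaced by the $L$-spectra, which is exactly what allows the recursion to be expressed in terms of a single iterate.
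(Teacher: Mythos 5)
Your proof is correct and follows essentially the same route as the paper: induction on $k$ via the one-step factorization $C_{H^*}(\lambda)=Q_H(\lambda-2)\,C_H(\lambda)$ applied to $H=G^{(k-1)*}$, with bipartiteness of the iterated cover closing the recursion in the $L$-spectra alone and Pascal's identity yielding the stated multiplicities. Note only that the single-step tool you invoke is Theorem 3.2, not Theorem 3.3 (the latter is the statement being proved), and that your closed form $P_k(\lambda)=\prod_{j=0}^{k-1}P_1(\lambda-2j)^{\binom{k-1}{j}}$ is a slightly tidier way to finish than the paper's term-by-term recombination.
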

\begin{proof}
We prove this result by induction and we use induction on $k$. For $k=1$, the result follows by Theorem 3.2. For $k=2$, we have $G^{2*}=G^{**}$. Let $A(G^*)$ and $A(G^{**})$ be the adjacency matrices respectively of the graphs $G^*$ and $G^{**}$. It is not difficult to see that
$$A(G^{**})=
\quad\begin{pmatrix}
0 & A(G^*)+I_{2n} \\
A(G^*)+I_{2n} & 0
\end{pmatrix}.$$
Let $D(G^*)$ and $D(G^{**})$ be respectively the degree matrices of $G^*$ and $G^{**}$. It can be seen that $$D(G^{**})=
\quad
\begin{pmatrix}
D(G^*)+I_{2n} & 0\\
0 & D(G^*)+I_{2n}
\end{pmatrix}.$$
Therefore the Laplacian matrix of $G^{**}$ is $$L(G^{**})=D(G^{**})-A(G^{**})
=\quad
\begin{pmatrix}
D(G^*)+I_{2n} & -(A(G^*)+I_{2n}) \\
-(A(G^*)+I_{2n}) & D(G^*)+I_{2n}
\end{pmatrix}.$$
So the Laplacian characteristic polynomial of $G^{**}$ is
$$C_{G^{**}}(\lambda)= |\lambda I_{4n}-L(G^{**})|=
\quad
\begin{vmatrix}
(\lambda-1)I_{2n}-D(G^*) & A(G^*)+I_{2n} \\
A(G^*)+I_{2n} & (\lambda-1)I_{2n}-D(G^*)
\end{vmatrix}$$
$$=\left |((\lambda-1)I_{2n}-D(G^*))-(A(G^*)+I_{2n})\right |\left |((\lambda-1)I_{2n}-D(G^*))+(A(G^*)+I_{2n}) \right|$$
$$=\left |(\lambda-2)I_{2n}-(D(G^*)+A(G^*)) \right| \left| \lambda I_{2n}-(D(G^*)-A(G^*)) \right|$$
$$= Q_{G^*}(\lambda-2)C_{G^*}(\lambda).$$
From this it is clear that the $L$-spectra of $G^{**}$ is $\mu_i,~\mu_i+2,~\mu_i^{+}+2,~\mu_i^{+}+4$, for $1 \leq i \leq n$, that is $L$-spectra of $G^{**}$ is $\mu_i$ (${2 \choose 0}$ times), $\mu_i+2$ (${1 \choose 1}$ times), $\mu_i^{+}+2$ (${1 \choose 0}$ times)  and $\mu_i^{+}+4$ (${2 \choose 2}$ times).
Therefore the result is true in this case. Assume that the result is true for $k=s-1$. Then by induction hypothesis the $L$-spectra of $G^{(s-1)*}$ is
 $\mu_i ({s-1 \choose 0}$ times), $\mu_i+2 ({s-2 \choose 1}$ times), $\mu_i^{+}+2 ({s-2 \choose 0}$ times), $\cdots$, $\mu_i+2(s-2) ({s-2 \choose s-2}$ times), $\mu_i^{+}+2(s-2) ({s-2 \choose s-3}$ times), $\mu_i^{+}+2(s-1) ({s-1 \choose s-1}$ times). Now for $k=s$, it can be seen by proceeding as in the case $k=2$ the Laplacian matrix $L(G^{s*})$ of the graph $G^{s*}$ is  $$L(G^{s*})=D(G^{s*})-A(G^{s*})
=\quad
\begin{pmatrix}
D(G^{(s-1)*})+I_{2^{s-1}n} & -(A(G^{(s-1)*})+I_{2^{s-1}n}) \\
-(A(G^{(s-1)*})+I_{2^{s-1}n}) & D(G^{(s-1)*})+I_{2^{s-1}n}
\end{pmatrix}.$$
\indent Therefore, the Laplacian characteristic polynomial of $G^{s*}$ is
$$C_{G^{s*}}(\lambda)=|\lambda I_{2^{s}n}-L(G^{s*})= \quad
\begin{vmatrix}
(\lambda-1)I_{2^{s-1}n}-D(G^{(s-1)*}) & A(G^{(s-1)*})+I_{2^{s-1}n} \\
A(G^{(s-1)*})+I_{2^{s-1}n} & (\lambda-1)I_{2^{s-1}n}-D(G^{(s-1)*})
\end{vmatrix}$$
$$=\left |((\lambda-1)I_{2^{s-1}n}-D(G^{(s-1)*}))-(A(G^{(s-1)*})+I_{2^{s-1}n})\right |\times$$
$$\left |((\lambda-1)I_{2^{s-1}n}-D(G^{(s-1)*}))+(A(G^{(s-1)*})+I_{2^{s-1}n}) \right|$$
$$=\left |(\lambda-2)I_{2^{s-1}n}-(D(G^{(s-1)*})+A(G^{(s-1)*})) \right| \left| \lambda I_{2^{s-1}n}-(D(G^{(s-1)*})-A(G^{(s-1)*})) \right|$$
$$= Q_{G^{(s-1)*}}(\lambda-2)C_{G^{(s-1)*}}(\lambda).$$
Therefore, it follows that the $L$-spectra of the graph $G^{s*}$ is  $\mu_i ({s-1 \choose 0}$ times), $\mu_i+2 ({s-2 \choose 1}$ times), $\mu_i^{+}+2 ({s-2 \choose 0}$ times), $\cdots$, $\mu_i+2(s-2) ({s-2 \choose s-2}$ times), $\mu_i^{+}+2(s-2) ({s-2 \choose s-3}$ times), $\mu_i^{+}+2(s-1) ({s-1 \choose s-1}$ times), $\mu_i+2 ({s-1 \choose 0}$ times), $\mu_i+4 ({s-2 \choose 1}$ times), $\mu_i^{+}+4 ({s-2 \choose 0}$ times), $\cdots$, $\mu_i+2(s-1) ({s-2 \choose s-2}$ times), $\mu_i^{+}+2(s-1) ({s-2 \choose s-3}$ times), $\mu_i^{+}+2s ({s-1 \choose s-1}$ times).\\
\indent Using ${k \choose r}+{k \choose {r-1}}={k+1 \choose r}$, $0\leq r \leq k$ and ${{s-1} \choose 0}={s \choose 0}={{s-1} \choose {s-1}}={{s-2} \choose {s-2}}=1$, we see that the $L$-spectra of $G^{s*}$ is  $\mu_i ({s \choose 0}$ times), $\mu_i+2 ({s-1 \choose 1}$ times), $\mu_i^{+}+2 ({s-1 \choose 0}$ times), $\mu_i+4 ({s-1 \choose 2}$ times), $\mu_i^{+}+4 ({s-1 \choose 1}$ times), $\cdots$, $\mu_i+2(s-2) ({s-1 \choose s-2}$ times), $\mu_i^{+}+2(s-2) ({s-1 \choose s-3}$ times), $\mu_i+2(s-1) ({s-1 \choose s-1}$ times), $\mu_i^{+}+2(s-1) ({s-1 \choose s-2}$ times), $\mu_i^{+}+2s ({s \choose s}$ times). Thus the result is true in this case as well hence by induction the result follows.
\end{proof}

\indent If $G$ is a bipartite graph, it is easy to see that under elementary transformation the Laplacian characteristic polynomial of $G$ coincides with the signless Laplacian characteristic polynomial of $G$. Therefore the Laplacian and signless Laplacian spectra of $G$  are same. We have the following observation.

\begin{corollary}
If $G(n,m)$ is a bipartite graph having $L$-spectra $\mu_i$, $1\leq i\leq n$, then the $L$-spectra of $k$-th iterated double cover $G^{k*}$ of $G$ is  $\mu_i ({k \choose 0}$ times), $\mu_i+2  ({k \choose 1}$ times), $\cdots$, $\mu_i+2(k-2) ({k \choose k-2}$ times), $\mu_i+2(k-1) ({k \choose k-1}$ times), $\mu_i+2k ({k \choose k}$ times), where $1\leq i \leq n$.
\end{corollary}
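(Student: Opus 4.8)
The plan is to obtain this as an immediate specialization of Theorem 3.3 together with the fact that the Laplacian and signless Laplacian spectra of a bipartite graph coincide. First I would invoke the observation recorded in the paragraph preceding the statement: when $G$ is bipartite, a diagonal $\pm 1$ similarity (assigning $+1$ to one part of the bipartition and $-1$ to the other) carries $A(G)$ to $-A(G)$ while fixing $D(G)$, hence conjugates $L^{+}(G)=D(G)+A(G)$ into $L(G)=D(G)-A(G)$. Thus $C_G(\lambda)=Q_G(\lambda)$, and so $\mu_i=\mu_i^{+}$ for all $1\le i\le n$.

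With this identity in hand, I would substitute $\mu_i^{+}=\mu_i$ into the eigenvalue list supplied by Theorem 3.3. For each fixed $i$, that list presents, for every shift $2j$ with $1\le j\le k-1$, the value $\mu_i+2j$ with multiplicity $\binom{k-1}{j}$ (from the ``$\mu$'' family) alongside the value $\mu_i^{+}+2j$ with multiplicity $\binom{k-1}{j-1}$ (from the ``$\mu^{+}$'' family). Under $\mu_i^{+}=\mu_i$ these two values coincide, so the combined multiplicity of $\mu_i+2j$ becomes $\binom{k-1}{j}+\binom{k-1}{j-1}$.

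The core of the argument is then a one-line application of Pascal's rule, $\binom{k-1}{j}+\binom{k-1}{j-1}=\binom{k}{j}$, which produces the asserted multiplicity $\binom{k}{j}$ for each interior shift $1\le j\le k-1$. I would dispose of the two extreme shifts by direct inspection: at $j=0$ only the unshifted value $\mu_i$ occurs, with multiplicity $\binom{k}{0}=1$, and at $j=k$ only $\mu_i^{+}+2k=\mu_i+2k$ occurs, with multiplicity $\binom{k}{k}=1$; neither involves any merging, and both already match the claimed counts. A dimension count gives a useful sanity check, since $\sum_{j=0}^{k}\binom{k}{j}=2^{k}$ accounts for exactly the $2^{k}n$ eigenvalues of $G^{k*}$.

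I do not anticipate a genuine obstacle here, as the statement is essentially a collapse of Theorem 3.3. The only point demanding care is the combinatorial bookkeeping: one must pair the $\mu$-term carrying shift $2j$ with the $\mu^{+}$-term carrying the same shift $2j$ (rather than with an adjacent shift) before summing, since a misalignment of the two binomial indices would yield an incorrect total multiplicity.
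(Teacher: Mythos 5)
Your proposal is correct and follows essentially the same route as the paper: set $\mu_i^{+}=\mu_i$ in Theorem 3.3 (using that the Laplacian and signless Laplacian spectra of a bipartite graph coincide) and then merge the paired multiplicities via Pascal's rule $\binom{k-1}{j}+\binom{k-1}{j-1}=\binom{k}{j}$. Your extra care with the alignment of shifts and the $2^{k}n$ dimension check are sound refinements of the same argument.
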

\begin{proof}
Since for a bipartite graph $G$ the Laplacian and the signless Laplacian spectra are same, we have $\mu_i=\mu_i^{+}$ for all $  1\leq i \leq n$. Using this in Theorem 3.3, we obtain the $L$-spectra of $G^{k*}$ as  $\mu_i ({k \choose 0}$ times), $\mu_i+2 ({k-1 \choose 1}$ times), $\mu_i+2 ({k-1 \choose 0}$ times), $\mu_i+4 ({k-1 \choose 2}$ times), $\mu_i+4 ({k-1 \choose 1}$ times), $\cdots$, $\mu_i+2(k-2) ({k-1 \choose k-2}$ times), $\mu_i+2(k-2) ({k-1 \choose k-3}$ times), $\mu_i+2(k-1) ({k-1 \choose k-1}$ times), $\mu_i+2(k-1) ({k-1 \choose k-2}$ times), $\mu_i+2k ({k \choose k}$ times). Now using the fact ${t \choose r}+{t \choose {r-1}}={{t+1} \choose r}$, $0\leq r\leq t$, the result follows.
\end{proof}
\indent In \cite{c} three formulae are given for the number of spanning trees of $G^*$ in terms of $A$-spectra of the corresponding graph $G$. We now obtain a formula for the number of spanning trees in terms of the $L$ and $Q$-spectra of $G^*$.
\begin{theorem}
The number of spanning trees $\tau(G^*)$ of the graph $G^*$ is
$$\tau(G^*)=\frac{1}{2}\tau(G)\prod\limits_{i=1}^{n}(\mu_i^{+}+2).$$
\end{theorem}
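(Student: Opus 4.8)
The plan is to combine the Matrix--Tree theorem with the Laplacian spectrum of $G^*$ already computed in Theorem 3.2. Recall that for a connected graph $H$ on $N$ vertices with Laplacian eigenvalues $0=\nu_N<\nu_{N-1}\le\cdots\le\nu_1$, the number of spanning trees equals the product of the nonzero Laplacian eigenvalues divided by the number of vertices, that is $\tau(H)=\frac{1}{N}\prod_{i=1}^{N-1}\nu_i$.

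First I would record that, since $G^*$ is connected if and only if $G$ is (as noted in the introduction), we may take $G$ connected, so that $\mu_n=0$ and $\mu_i>0$ for $1\le i\le n-1$. By Theorem 3.2 the Laplacian spectrum of $G^*$ consists of $\mu_1,\dots,\mu_n$ together with $\mu_1^{+}+2,\dots,\mu_n^{+}+2$. Because each signless Laplacian eigenvalue satisfies $\mu_i^{+}\ge 0$, every value $\mu_i^{+}+2\ge 2$ is strictly positive, so the single zero eigenvalue of $G^*$ is exactly the copy of $\mu_n=0$. Hence the nonzero Laplacian eigenvalues of $G^*$ are precisely $\mu_1,\dots,\mu_{n-1}$ and $\mu_1^{+}+2,\dots,\mu_n^{+}+2$.

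Applying the Matrix--Tree theorem to $G^*$, which has $2n$ vertices, then gives
$$\tau(G^*)=\frac{1}{2n}\left(\prod_{i=1}^{n-1}\mu_i\right)\left(\prod_{i=1}^{n}(\mu_i^{+}+2)\right).$$
Applying the same theorem to $G$ on $n$ vertices yields $\tau(G)=\frac{1}{n}\prod_{i=1}^{n-1}\mu_i$, so that $\prod_{i=1}^{n-1}\mu_i=n\,\tau(G)$. Substituting this into the previous display, one factor of $n$ cancels against $2n$, leaving
$$\tau(G^*)=\frac{1}{2n}\cdot n\,\tau(G)\prod_{i=1}^{n}(\mu_i^{+}+2)=\frac{1}{2}\,\tau(G)\prod_{i=1}^{n}(\mu_i^{+}+2),$$
which is the claimed identity.

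Once Theorem 3.2 is available this argument is essentially bookkeeping, so I do not anticipate a serious obstacle; the only point requiring care is the correct identification of the unique zero eigenvalue of $G^*$. The decisive observation is that the $\mu_i^{+}$ are nonnegative, so the shift by $2$ keeps every $\mu_i^{+}+2$ bounded away from zero, which guarantees that exactly one zero (the copy of $\mu_n=0$) is discarded from the Matrix--Tree product. This is consistent with $G^*$ being connected and hence having a one-dimensional Laplacian kernel.
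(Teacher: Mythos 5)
Your argument is correct and is essentially identical to the paper's proof: both invoke Theorem 3.2 for the Laplacian spectrum of $G^*$ and then apply the Matrix--Tree theorem to the $2n$-vertex graph $G^*$, cancelling $\prod_{i=1}^{n-1}\mu_i = n\,\tau(G)$ against the $\frac{1}{2n}$ factor. Your explicit check that each $\mu_i^{+}+2>0$, so that the unique zero eigenvalue of $G^*$ is the copy of $\mu_n=0$, is a small point of care the paper leaves implicit.
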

\begin{proof}
Let $0=\mu_n<\mu_{n-1}\leq\cdots\leq\mu_1$ and $0<\mu^+_n<\mu^+_{n-1}\leq\cdots\leq\mu^+_1$  be respectively the $L$-spectra and the $Q$-spectra of the graph $G$. By Theorem 3.2, the $L$-spectra of the graph $G^*$ is $\mu_i, \mu_i^{+}+2$ for $i=1,2,\cdots,n$. By using the fact that the number of spanning trees of a graph of order $n$ is $\frac{1}{n}$ times the product of $(n-1)$ largest Laplacian eigenvalues of the graph, we have
\begin{align*}
\tau(G^*)=\frac{1}{2n}\prod\limits_{i=1}^{n-1}\mu_i\prod\limits_{i=1}^{n}(\mu_i^{+}+2)=\frac{1}{2}\tau(G)\prod\limits_{i=1}^{n}(\mu_i^{+}+2).
\end{align*}
\end{proof}
\indent In case $G$ is bipartite, $\mu_i=\mu_i^+$, so we have
\begin{align*}
\tau(G^*)=\frac{1}{2n}\prod\limits_{i=1}^{n-1}\mu_i\prod\limits_{i=1}^{n}(\mu_i+2)=\tau(G)\prod\limits_{i=1}^{n-1}(\mu_i+2).
\end{align*}
\indent In \cite{c} it is shown that the graphs $G^*$ and $G\times K_2$ are $A$-cospectral if and only if $G=K_1$ or $G$ is bipartite. An analogous result holds for the $L$-spectra and is given below.
\begin{theorem}
The graphs $G^*$ and $G\times K_2$ are $L$-cospectral if and only if $G=K_1$ or $G$ is bipartite.
\end{theorem}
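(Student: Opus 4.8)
The plan is to compute both $L$-spectra explicitly and then reduce the claim to a spectral characterization of bipartiteness. First I would record the $L$-spectrum of $G\times K_2$: since $K_2$ has Laplacian eigenvalues $0$ and $2$, Lemma 2.1 gives the $L$-spectrum of $G\times K_2$ as $\mu_1,\dots,\mu_n,\ \mu_1+2,\dots,\mu_n+2$, where $\mu_1,\dots,\mu_n$ is the $L$-spectrum of $G$. On the other hand, Theorem 3.2 gives the $L$-spectrum of $G^*$ as $\mu_1,\dots,\mu_n,\ \mu_1^{+}+2,\dots,\mu_n^{+}+2$, where $\mu_i^{+}$ is the $Q$-spectrum of $G$. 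Both multisets contain the common sub-multiset $\{\mu_1,\dots,\mu_n\}$.

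Next I would cancel the common block. As multisets of $2n$ reals the two spectra agree if and only if, after removing $\{\mu_1,\dots,\mu_n\}$, the remaining blocks coincide, that is
$$\{\mu_1^{+}+2,\dots,\mu_n^{+}+2\}=\{\mu_1+2,\dots,\mu_n+2\},$$
which after subtracting $2$ is equivalent to $\{\mu_1^{+},\dots,\mu_n^{+}\}=\{\mu_1,\dots,\mu_n\}$. Hence $G^*$ and $G\times K_2$ are $L$-cospectral if and only if the Laplacian and signless Laplacian spectra of $G$ coincide. Multiset cancellation is valid even if some $\mu_i^{+}+2$ happens to equal some $\mu_j$, so no case analysis is needed here. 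This reduces the theorem to proving that the $L$-spectrum of $G$ equals the $Q$-spectrum of $G$ if and only if $G=K_1$ or $G$ is bipartite.

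For the easy direction, if $G$ is bipartite then, as already used in the proof of Corollary 3.4, the Laplacian and signless Laplacian characteristic polynomials coincide, so $\mu_i=\mu_i^{+}$; concretely, with $S$ the diagonal $\pm1$ matrix recording a $2$-colouring one has $S\,L(G)\,S=D(G)+A(G)=L^{+}(G)$, exhibiting $L(G)$ and $L^{+}(G)$ as similar and hence cospectral. The trivial graph $K_1$ has $L=L^{+}=[0]$ and is handled directly.

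The main obstacle is the converse. Here I would compare nullities: $0$ is an eigenvalue of $L(G)$ with multiplicity equal to the number of connected components of $G$ (recalled in Section 1), whereas $0$ is an eigenvalue of $L^{+}(G)$ with multiplicity equal to the number of bipartite components of $G$ (the standard signless-Laplacian fact). If the two spectra coincide these multiplicities must be equal, forcing every component of $G$ to be bipartite, i.e. $G$ bipartite, with $K_1$ the degenerate single-vertex case, listed separately because a bipartition into two nonempty classes is then impossible. Establishing, or citing, the equality ``nullity of $L^{+}$ equals the number of bipartite components'' is the one nonroutine ingredient; everything else is bookkeeping with the two displayed spectra.
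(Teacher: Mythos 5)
Your proposal is correct and follows essentially the same route as the paper: compute the $L$-spectrum of $G^*$ via Theorem 3.2 and of $G\times K_2$ via Lemma 2.1, cancel the common block $\{\mu_1,\dots,\mu_n\}$, and reduce the claim to the equality of the $L$- and $Q$-spectra of $G$, which holds precisely when $G$ is bipartite (or $G=K_1$). The only difference is that you supply the justifications the paper leaves implicit --- the multiset cancellation and, for the converse, the standard fact that the nullity of $L^{+}(G)$ counts the bipartite components while that of $L(G)$ counts all components --- whereas the paper simply asserts that $\mu_i=\mu_i^{+}$ forces $G$ to be bipartite.
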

\begin{proof}
If $G=K_1$, the graphs $G^*$ and $G\times K_2$ are both isomorphic to $K_1$, so are $L$-cospectral. Now if $G\neq K_1$, assume that $G$ is bipartite. Then $\mu_i=\mu_i^+$ and so the $L$-spectra of $G^*$ is $\mu_i, \mu_i+2$ for $1\leq i\leq n$ which is same as the $L$-spectra of $G\times K_2$. Conversely, suppose that the graphs $G^*$ and $G\times K_2$ are $L$-cospectral. Then $\mu_i=\mu_i^+$, which is only possible if $G$ is bipartite. Hence the result.
\end{proof}
\indent An {\it integral graph} is a graph all of whose eigenvalues are integers. Following observation is a consequence of Theorem 3.3.
\begin{theorem}
A graph $G$ is {\it Laplacian integral} if and only if the graph $G^{k*}$ is Laplacian integral graph.
\end{theorem}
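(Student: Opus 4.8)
The plan is to derive both implications from Theorem 3.3, which exhibits the full $L$-spectrum of $G^{k*}$. Every eigenvalue listed there has one of the two forms $\mu_i+2t$ or $\mu_i^{+}+2t$, where $\mu_i$ and $\mu_i^{+}$ run over the Laplacian and signless Laplacian eigenvalues of $G$ and $t\in\{0,1,\dots,k\}$. The single fact I would isolate at the outset is that each shift $2t$ is an integer, so a shifted value is an integer exactly when its base eigenvalue is. Consequently the Laplacian integrality of the whole graph $G^{k*}$ is governed entirely by the integrality of the numbers $\mu_i$ and $\mu_i^{+}$ attached to $G$, and the proof reduces to tracking these two families.

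The direction ``$G^{k*}$ Laplacian integral $\Rightarrow$ $G$ Laplacian integral'' is then immediate. Among the eigenvalues in Theorem 3.3 the unshifted family $\mu_i$ occurs (the term with $t=0$, of multiplicity ${k \choose 0}=1$); hence if every eigenvalue of $G^{k*}$ is an integer, then in particular each $\mu_i$ is an integer, so $G$ is Laplacian integral. No further work is needed here.

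For the converse, ``$G$ Laplacian integral $\Rightarrow$ $G^{k*}$ Laplacian integral'', I would prefer the inductive form of the construction to the raw list. For $k\ge 2$ the graph $G^{(k-1)*}$ is bipartite, so its $L$- and $Q$-spectra coincide; substituting $H=G^{(k-1)*}$ into Theorem 3.2 then shows that the $L$-spectrum of $G^{k*}=H^{*}$ is exactly the $L$-spectrum of $G^{(k-1)*}$ together with a second copy of it shifted by $2$. Hence $G^{k*}$ is Laplacian integral if and only if $G^{(k-1)*}$ is, and a downward induction collapses the statement to the base case $k=1$, which is precisely the content of Theorem 3.2 relating $G^{*}$ to $G$.

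The base case $k=1$ is where the real work sits, and within it the delicate point is the family of eigenvalues of $G^{*}$ coming from the signless Laplacian spectrum of $G$: by Theorem 3.2 these are the numbers $\mu_i^{+}+2$, and the forward implication needs each of them to be an integer. By the even-shift observation this amounts to requiring every $\mu_i^{+}$ to be an integer, so the crux is to transport integrality from the Laplacian spectrum of $G$ to its signless Laplacian spectrum. I expect this to be the main obstacle; the cleanest way to discharge it is the identity $\mu_i=\mu_i^{+}$ that holds whenever $G$ is bipartite (already exploited in Corollary 3.4), under which the two eigenvalue families of Theorem 3.3 merge into a single family of even integer shifts of the $\mu_i$, so that Laplacian integrality of $G$ alone propagates to every iterate $G^{k*}$.
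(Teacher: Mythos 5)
Your reduction is the right one, and parts of it are sound: by Theorem 3.3 each $\mu_i$ occurs (with multiplicity $\binom{k}{0}=1$) among the Laplacian eigenvalues of $G^{k*}$, so Laplacian integrality of $G^{k*}$ does force that of $G$; and your induction via Theorem 3.2, using that $G^{(k-1)*}$ is bipartite so its $L$- and $Q$-spectra coincide, correctly collapses everything to the case $k=1$. But at the base case your proof has a genuine gap, and you in fact name it yourself: to pass from ``$G$ Laplacian integral'' to ``$G^{*}$ Laplacian integral'' you must show each $\mu_i^{+}$ is an integer, and your only mechanism for this --- the identity $\mu_i=\mu_i^{+}$ --- is available \emph{only when $G$ is bipartite}, a hypothesis the theorem does not contain. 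So what you have actually proved is: $G^{k*}$ Laplacian integral $\Rightarrow$ $G$ Laplacian integral, together with the converse for bipartite $G$; the converse for general $G$ is left unestablished.

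Moreover, this gap cannot be closed, because Laplacian integrality does not transfer to the signless Laplacian for non-bipartite graphs. Take $G$ to be the paw (a triangle with a pendant edge). Its $L$-spectrum is $\{4,3,1,0\}$, so $G$ is Laplacian integral, while its $Q$-characteristic polynomial factors as $(x-1)(x-2)(x^{2}-5x+2)$, giving the irrational $Q$-eigenvalues $(5\pm\sqrt{17})/2$; by Theorem 3.2 the number $(5+\sqrt{17})/2+2$ is then a Laplacian eigenvalue of $G^{*}$, so $G^{*}$ (hence every $G^{k*}$) fails to be Laplacian integral. Thus the theorem as literally stated is false without a bipartiteness (or $Q$-integrality) hypothesis. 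For comparison, the paper offers no argument at all --- the result is asserted as an immediate ``consequence of Theorem 3.3'' --- and the statement that genuinely follows from Theorem 3.3 is the corrected one: $G^{k*}$ is Laplacian integral if and only if $G$ is both Laplacian and signless-Laplacian integral (for bipartite $G$ the two conditions coincide, which is exactly the case your argument handles). In locating the $\mu_i^{+}$ obstruction precisely, your analysis is sharper than the paper's treatment, but as a proof of the stated theorem it does not, and cannot, go through.
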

\indent It is clear from Theorem 3.7, that given a Laplacian integral $G$ it is always possible to construct an infinite sequence of Laplacian integral graphs. Indeed the graph $G^{k*}$ is Laplacian integral for all $k\geq 1$.\\
\indent Two graphs $G_1$ and $G_2$  are said to be co-spectral, if they are non-isomorphic and have the same spectra. We have the following result, which follows by Theorem 3.3.
\begin{theorem}
Two graphs $G_1$ and $G_2$ are Laplacian co-spectral if and only if the graphs $G_1^{k*}$ and $G_2^{k*}$ are Laplacian co-spectral.
\end{theorem}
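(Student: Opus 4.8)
The plan is to treat Theorem 3.3 as an explicit, invertible correspondence and to fold the whole equivalence back onto its first instance. Write $\ell(H)$ and $q(H)$ for the Laplacian and signless-Laplacian spectra of a graph $H$, regarded as multisets. The engine is the factorisation already extracted in the proofs of Theorems 3.2 and 3.3,
\[
C_{G^{k*}}(\lambda)=Q_{G^{(k-1)*}}(\lambda-2)\,C_{G^{(k-1)*}}(\lambda)\qquad(k\ge1),
\]
which in spectral language reads $\ell(G^{k*})=\ell(G^{(k-1)*})\uplus\bigl(q(G^{(k-1)*})+2\bigr)$. My first move is to note that for $k\ge2$ the inner graph $G^{(k-1)*}$ is bipartite, so $q(G^{(k-1)*})=\ell(G^{(k-1)*})$ and the recursion becomes self-similar,
\[
\ell(G^{k*})=\ell(G^{(k-1)*})\uplus\bigl(\ell(G^{(k-1)*})+2\bigr).
\]

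The key tool I would isolate is a deconvolution lemma: if a multiset $S$ of reals satisfies $S=A\uplus(A+2)$, then $A$ is uniquely determined by $S$. This is immediate from the generating-function identity $(1+x^{2})f_A=f_S$ with $f_A=\sum_{a\in A}x^{a}$ (the factor $1+x^{2}$ being a non-zero-divisor), or constructively by repeatedly removing the minimum of $S$, which must lie in $A$, together with its translate. Applied to the self-similar recursion this gives, for every $k\ge2$,
\[
\text{$G_1^{k*},G_2^{k*}$ are $L$-cospectral}\iff\text{$G_1^{(k-1)*},G_2^{(k-1)*}$ are $L$-cospectral},
\]
so descending step by step reduces the theorem to the base case $k=1$: that $G_1^{*}$ and $G_2^{*}$ are $L$-cospectral if and only if $G_1$ and $G_2$ are.

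For the base case I would start from Theorem 3.2 in the form $\ell(G^{*})=\ell(G)\uplus\bigl(q(G)+2\bigr)$, and this is where the genuine difficulty concentrates, since the two halves adjoined are now the Laplacian and the \emph{signless} Laplacian spectra of $G$, which agree only for bipartite $G$. For the forward implication one assumes $\ell(G_1)=\ell(G_2)$ and wants $\ell(G_1)\uplus(q(G_1)+2)=\ell(G_2)\uplus(q(G_2)+2)$; cancelling the equal $L$-parts reduces this to $q(G_1)=q(G_2)$, i.e.\ to signless-Laplacian cospectrality, which $L$-cospectrality does not by itself guarantee. I expect this to be the main obstacle: forcing the forward direction for every $G$ appears to require $Q$-cospectrality as an extra input, so a complete proof must either supply it or show it is automatic here.

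For the reverse implication one is given $\ell(G_1^{*})=\ell(G_2^{*})$ and may exploit all power sums $\sum_{\lambda}\lambda^{t}$ over $\ell(G_i^{*})$. Expanding $\sum_j(\mu_j^{+}+2)^{t}$ and using the moments $\mathrm{tr}\,L(G_i)^{t}$ and $\mathrm{tr}\,Q(G_i)^{t}$, the low orders successively force $n$, $m$, $\sum_j d_j^{2}$ and $\sum_j d_j^{3}$ to agree for $G_1$ and $G_2$. The sticking point is the third moment: as $\mathrm{tr}\,L^{3}=\sum_j d_j^{3}+3\sum_j d_j^{2}-6\tau$ and $\mathrm{tr}\,Q^{3}=\sum_j d_j^{3}+3\sum_j d_j^{2}+6\tau$ differ by $12\tau$, with $\tau$ the number of triangles of $G$, the triangle term cancels inside $\mathrm{tr}\,L(G^{*})^{3}$, so $\ell(G^{*})$ does not record $\tau$. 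Hence the Laplacian part of $\ell(G^{*})$ cannot be disentangled from the signless-Laplacian part using the low moments alone, and controlling this separation—along with the higher-order discrepancies between the $L$- and $Q$-moments of $G$—is the step on which the whole equivalence ultimately rests.
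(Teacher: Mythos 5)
Your proposal does not close, but the point where it sticks is exactly the point where the paper's own argument is vacuous: the paper offers no proof of this theorem beyond the remark that it ``follows by Theorem 3.3'', and your analysis shows that it does not. Theorem 3.3 expresses the $L$-spectrum of $G^{k*}$ through \emph{both} $\ell(G)$ and $q(G)$; already at $k=1$, Theorem 3.2 gives $\ell(G^*)=\ell(G)\uplus\bigl(q(G)+2\bigr)$, so if $\ell(G_1)=\ell(G_2)$ then by multiset cancellation $\ell(G_1^*)=\ell(G_2^*)$ holds if and only if $q(G_1)=q(G_2)$. Since $L$-cospectrality does not imply $Q$-cospectrality --- bipartiteness is not determined by the $L$-spectrum, whereas the multiplicity of $0$ in the $Q$-spectrum counts the bipartite components, so any $L$-cospectral pair with exactly one bipartite member has distinct $Q$-spectra --- the forward implication as stated fails for general graphs. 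The ``main obstacle'' you flag in the base case is therefore not a defect of your method but a missing hypothesis in the statement itself: one needs $G_1,G_2$ bipartite (the setting of Corollary 3.4), or $L$- together with $Q$-cospectrality, for the forward direction to go through.

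On the positive side, your descent is sound and delivers strictly more than the paper's one-line justification. For $k\geq 2$ the inner graph $G^{(k-1)*}$ is bipartite, the recursion becomes $\ell(G^{k*})=\ell(G^{(k-1)*})\uplus\bigl(\ell(G^{(k-1)*})+2\bigr)$, and your deconvolution lemma --- that $S=A\uplus(A+2)$ determines $A$ --- is correct: the minimum-removal argument is airtight, and the generating-function version works because the group ring of the ordered group $(\mathbb{R},+)$ has no zero divisors. This inverts each level, so $L$-cospectrality of the $k$-th covers is equivalent to $L$-cospectrality of the first covers, a genuine biconditional reduction that the paper never supplies. Under either repaired hypothesis your machinery then closes completely: for bipartite $G_i$ one has $q=\ell$ and the base step is one more instance of the same lemma, while with $Q$-cospectrality assumed the forward base step is immediate. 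The reverse base step for arbitrary graphs --- whether $\ell(G_1^*)=\ell(G_2^*)$ forces $\ell(G_1)=\ell(G_2)$ --- remains open exactly as you say, and your moment computation (the triangle term $12\tau$ cancels between $\mathrm{tr}\,L^3$ and $\mathrm{tr}\,Q^3$ inside $\mathrm{tr}\,L(G^*)^3$) correctly shows that low moments cannot separate the two halves; but no repair of that step can rescue the unconditional statement, since the forward direction already fails without the added hypothesis.
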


\indent Thus given two Laplacian co-spectral graphs $G_1$ and $G_2$, it is always possible to construct an infinite sequence of Laplacian co-spectral graphs. Indeed the graphs $G_1^{k*}$ and $G_2^{k*}$ are Laplacian co-spectral for all $k\geq 1$.\\
\indent Since the extended double cover $G^*$ of the graph $G$ is always bipartite, it follows by Theorem 3.6, the graphs $G^{**}$ and $G^*\times K_2$ are $L$-cospectral and in general the graphs $G^{s*}$ and $G^{(s-1)*}\times K_2$ are $L$-cospectral. Also it is easy to see that the graphs $(G\times K_2)^*$ and $G^*\times K_2$ are $L$-cospectral and in general the graphs $(G\times K_2)^{s*}$ and $G^{s*}\times K_2$ are both $L$-cospectral as well as $Q$-cospectral. Moreover, if $G$ is bipartite then as seen in Theorem 3.6, the graphs $G^*$ and $G\times K_2$ are $L$-cospectral. Using the same argument it can be seen that the graphs $G^{**}$ and $G\times K_2\times K_2$ are $L$-cospectral if and only if $G$ is bipartite. A repeated use of the argument as used in Theorem 3.6, gives the graphs $G^{s*}$ and $(G\times K_2\times K_2\times\cdots s-times)=(G\times sK_2)=(G\times Q_s)$ are $L$-cospectral if and only if $G$ is bipartite. From this discussion it follows that the graphs $G^{s*}$, $G^{(s-1)*}\times K_2$, $(G\times K_2)^{(s-1)*}$ and $G\times Q_{s-1}$ are mutually non-isomorphic $L$-cospectral graphs if and only $G$ is bipartite, where $Q_n$ is the hypercube.

\section{Laplacian energy of double graphs}

In this section, we study the Laplacian energy of the graphs $D[G]$, $D^k[G]$ and $G^*$. Using these graphs we obtain some new families of non Laplacian cospectral $L$-equienergetic graphs.
Let $D[G]$ and $G^*$ be respectively the double graph and the extended double cover of the graph $G$. Then the Laplacian spectra of the graph $G^*$ is given by Theorem 3.2, and the Laplacian spectra of $D^k[G]$ is given by the following Theorem \cite{ms}.
\begin{theorem}
Let $G$ be a graph with $n$ vertices having degrees $d_1, d_2, \cdots, d_n$ and let $\mu_1, \mu_2, \cdots, \mu_n$ be its Laplacian spectra. Then the Laplacian spectra of $D^k[G]$ is $k\mu_i, kd_i$ $((k-1)n$ times) for $1\leq i\leq n$.
\end{theorem}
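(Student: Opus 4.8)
The plan is to realise $D^k[G]$ as a Kronecker product and then to exploit the fact that the two matrices appearing in its Laplacian share a common eigenbasis. Recall from the introduction that $D^k[G]=G\otimes T_k$, where $T_k$ is $K_k$ with a loop added at every vertex, so that $A(T_k)=J_k$, the $k\times k$ all-ones matrix. Ordering the $nk$ vertices of $D^k[G]$ as pairs (vertex of $G$, copy), the adjacency matrix is $A(D^k[G])=A(G)\otimes J_k$, and each vertex $(i,a)$ is adjacent to $(j,b)$ precisely when $a_{ij}=1$ (for every choice of $b$), so its degree is $kd_i$ and the degree matrix is $D(D^k[G])=kD(G)\otimes I_k$. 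Hence the Laplacian is
\begin{align*}
L(D^k[G])=kD(G)\otimes I_k-A(G)\otimes J_k.
\end{align*}

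First I would note that this is a difference of two Kronecker products with different right-hand factors, so Lemma 2.2 does not apply directly; the key observation is that $I_k$ and $J_k$ have a common orthonormal eigenbasis. Let $f_1=\tfrac{1}{\sqrt{k}}\mathbf{1}$ and let $f_2,\ldots,f_k$ be an orthonormal basis of $\mathbf{1}^{\perp}$ in $\mathbb{R}^k$. Then $J_kf_1=kf_1$ and $J_kf_p=0$ for $2\le p\le k$, while $I_kf_p=f_p$ for every $p$. This yields a decomposition $\mathbb{R}^{nk}=\bigoplus_{p=1}^{k}\left(\mathbb{R}^n\otimes f_p\right)$ in which every summand is invariant under $L(D^k[G])$.

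Next I would compute the action of $L(D^k[G])$ on each invariant summand. For $x\in\mathbb{R}^n$,
\begin{align*}
L(D^k[G])(x\otimes f_1)=kD(G)x\otimes f_1-A(G)x\otimes kf_1=k\,L(G)x\otimes f_1,
\end{align*}
so on $\mathbb{R}^n\otimes f_1$ the operator acts as $kL(G)$ and contributes the eigenvalues $k\mu_i$, $1\le i\le n$. For $2\le p\le k$ the term $A(G)\otimes J_k$ annihilates $f_p$, giving
\begin{align*}
L(D^k[G])(x\otimes f_p)=kD(G)x\otimes f_p,
\end{align*}
so on each of these $k-1$ summands the operator acts as $kD(G)$, whose eigenvalues are $kd_1,\ldots,kd_n$. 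Collecting over all $k$ summands produces $k\mu_i$ ($1\le i\le n$) together with $kd_i$ ($1\le i\le n$), each of the latter repeated $k-1$ times, and the total count $nk$ is correct.

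The only genuine obstacle is the first step: recognising that, because the Laplacian is not a single Kronecker product, one must pass to the shared eigenbasis of $I_k$ and $J_k$ before the matrix decouples. Once this is done everything is bookkeeping. A fully matrix-theoretic alternative, in the spirit of the block-determinant computations in Theorems 3.2 and 3.3, would conjugate $L(D^k[G])$ by $I_n\otimes P$, where $P$ is the orthogonal matrix with columns $f_1,\ldots,f_k$; this block-diagonalises the Laplacian into the single block $kL(G)$ and $k-1$ blocks $kD(G)$, whence $|\lambda I_{nk}-L(D^k[G])|=\det(\lambda I_n-kL(G))\cdot\det(\lambda I_n-kD(G))^{\,k-1}$ and the spectrum can be read off directly.
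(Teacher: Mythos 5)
Your proof is correct, and it is worth pointing out that the paper itself offers nothing to compare it with: Theorem 4.1 is imported from \cite{ms} and stated without proof, so your argument fills a gap rather than duplicating one. The two key identifications, $A(D^k[G])=A(G)\otimes J_k$ and $D(D^k[G])=kD(G)\otimes I_k$, are right (each vertex $(i,a)$ has exactly $kd_i$ neighbours), and you correctly flag the one genuine subtlety, namely that $L(D^k[G])=kD(G)\otimes I_k-A(G)\otimes J_k$ is not a single Kronecker product so Lemma 2.2 cannot be invoked; passing to the common eigenbasis $f_1,\dots,f_k$ of $I_k$ and $J_k$ legitimately splits $\mathbb{R}^{nk}$ into invariant summands on which the operator acts as $kL(G)$ (once) and as $kD(G)$ ($k-1$ times), and the multiplicity count $n+(k-1)n=kn$ is as required. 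Your closing alternative is also the one most in the spirit of the paper's own technique: for $k=2$ the block-determinant identity of Theorem 3.1 gives
\begin{align*}
\begin{vmatrix}\lambda I_n-2D(G)+A(G) & A(G)\\ A(G) & \lambda I_n-2D(G)+A(G)\end{vmatrix}=|\lambda I_n-2L(G)|\,|\lambda I_n-2D(G)|
\end{align*}
in one line, and conjugating by $I_n\otimes P$ is exactly the $k$-block generalization of that identity, yielding $|\lambda I_{nk}-L(D^k[G])|=\det(\lambda I_n-kL(G))\det(\lambda I_n-kD(G))^{k-1}$. Either formulation is a complete and self-contained proof of the cited result.
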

Let $\mu_i$ for $1\leq i \leq n$ be the $L$-spectra of the graph $G$. Then by Theorem 3.2, the $L$-spectra of the extended double cover $G^*$ of the graph $G$ is $\mu_i, \mu_i^{+}+2$ for $1\leq i\leq n$. Also the average vertex degree of $G^*$ is $\frac{2m}{n}+1$. Therefore,
\begin{align*}
LE(G^*)=\sum\limits_{i=1}^{n}|\mu_i-\frac{2m}{n}-1|+\sum\limits_{i=1}^{n}|\mu_i^{+}-\frac{2m}{n}+1|.
\end{align*}
Since average vertex degree of $D^k[G]$ is $k\frac{2m}{n}$, we have
\begin{align*}
LE(D^k[G])&=\sum\limits_{i=1}^{n}|k\mu_i-k\frac{2m}{n}|+(k-1)\sum\limits_{i=1}^{n}|kd_i-k\frac{2m}{n}|\\&
=k\sum\limits_{i=1}^{n}|\mu_i-\frac{2m}{n}|+k(k-1)\sum\limits_{i=1}^{n}|d_i-\frac{2m}{n}|\\&
=kLE(G)+k(k-1)\sum\limits_{i=1}^{n}|d_i-\frac{2m}{n}|.
\end{align*}
\indent From this it is clear that $LE(D^k[G])=k LE(G)$, if $G$ is regular. Also, since the $k$-fold graph of a regular graph is regular, it follows, if $G_1$ and $G_2$ are $r$-regular $L$-equienergetic graphs then their $k $-fold graphs $D^k[G_1]$ and $D^k[G_2]$ are also $L$-equienergetic. Let $\pounds(G)$ be the line graph of the graph $G$. It is shown in \cite{rgra} that if $G_1$ and $G_2$ are $r$-regular graphs then their $k$-th, ($k\geq 2$) iterated line graphs $\pounds^k(G_1)$ and $\pounds^k(G_2)$ are always equienergetic and so $L$-equienergetic. Therefore it follows that given any two $r$-regular graphs, we can always construct an infinite family of $L$-equienergetic graphs.\\
\indent In case the given $r$-regular connected graphs are $L$-equienergetic, the $k$-fold graph forms a larger family of $L$-equienergetic graphs than the $k$-th iterated line graph. As an example, consider the $4$-regular graphs $G_1$ and $G_2$ shown in Figure 1 on $9$-vertices. It can be seen that the $L$-spectra of $G_1$ and $G_2$ are respectively as $0, 3^4, 6^4$ and $0, 2, 3^2, 5^2, 6^3$ (where $a^s$ means $a$ occurs $s$ times in the spectrum). Therefore $LE(G_1)=16=LE(G_2)$. This shows that the graphs  $G_1$ and $G_2$ are regular $L$-equienergetic graphs, so their $k$-fold graphs $D^k[G_1]$ and $D^k[G_2]$ and their $k$-th, ($k \geq 2$) iterated line graphs are also $L$-equienergetic. Infact the $k$-fold graph gives an infinite family of $L$-equienergetic graph pairs of order $n\equiv 0(\mod 9)$, whereas the $k$-th iterated line graph gives an infinite family of $L$-equienergetic graph pairs of orders $n=54, 270, 2430$, and so on, from this the assertion follows.\\

\unitlength 1mm 
\linethickness{0.4pt}
\ifx\plotpoint\undefined\newsavebox{\plotpoint}\fi 
\begin{picture}(155.5,66.54)(0,0)
\multiput(6.75,50)(.04253112,.033713693){482}{\line(1,0){.04253112}}
\multiput(7,50)(.033717105,-.065789474){304}{\line(0,-1){.065789474}}
\put(26.75,65.75){\line(1,0){27.25}}
\multiput(54,65.75)(.033707865,-.047050562){356}{\line(0,-1){.047050562}}
\put(17,30){\line(1,0){31.5}}
\multiput(65.75,49.25)(-.03371869,-.037090559){519}{\line(0,-1){.037090559}}
\multiput(27.25,65.75)(.04850746,-.03358209){201}{\line(1,0){.04850746}}
\multiput(37,59)(.08706468,.03358209){201}{\line(1,0){.08706468}}
\put(7.25,49.75){\line(3,-2){12.75}}
\put(20,41.25){\line(-1,-4){2.75}}
\multiput(47.25,42.25)(.0328947,-.3157895){38}{\line(0,-1){.3157895}}
\multiput(47.5,42.25)(.08653846,.03365385){208}{\line(1,0){.08653846}}
\multiput(37.25,58.75)(-.033703072,-.048634812){586}{\line(0,-1){.048634812}}
\multiput(37.25,58.75)(.033682635,-.086826347){334}{\line(0,-1){.086826347}}
\multiput(7,50)(.16911765,-.03361345){238}{\line(1,0){.16911765}}
\multiput(26.75,65.5)(.033693844,-.038685524){601}{\line(0,-1){.038685524}}
\multiput(19.75,41.5)(.0484550562,.0337078652){712}{\line(1,0){.0484550562}}
\multiput(19.5,41.25)(.181547619,.033730159){252}{\line(1,0){.181547619}}
\put(27,65.5){\circle*{1}}
\put(54,65.25){\circle*{1}}
\multiput(112.5,65.25)(-.03367876,-.04145078){193}{\line(0,-1){.04145078}}
\put(106,57.25){\line(4,-3){8}}
\put(114,51.25){\line(1,1){7.75}}
\multiput(112.75,65.5)(.04663212,-.03367876){193}{\line(1,0){.04663212}}
\multiput(114,51.25)(-.033730159,-.037698413){252}{\line(0,-1){.037698413}}
\multiput(105.5,41.75)(.04726368,-.03358209){201}{\line(1,0){.04726368}}
\multiput(114,51.25)(.04807692,-.03365385){208}{\line(1,0){.04807692}}
\put(124,44.25){\line(-1,-1){9.25}}
\put(142.75,58.5){\line(0,1){.5}}
\put(142.75,59){\line(0,1){0}}
\put(155.5,59){\line(0,1){0}}
\multiput(146.5,55)(-.105799373,.03369906){319}{\line(-1,0){.105799373}}
\multiput(113,65.75)(-.070436508,-.033730159){504}{\line(-1,0){.070436508}}
\multiput(77.75,49)(.090073529,-.03370098){408}{\line(1,0){.090073529}}
\multiput(106,57)(-.11869748,-.03361345){238}{\line(-1,0){.11869748}}
\multiput(77.75,49)(.12443946,-.03363229){223}{\line(1,0){.12443946}}
\multiput(121.5,59)(.19642857,-.03373016){126}{\line(1,0){.19642857}}
\put(155.25,58.75){\line(0,1){0}}
\multiput(123.75,44.25)(.069711538,.033653846){312}{\line(1,0){.069711538}}
\put(65.25,49.25){\circle*{1.41}}
\put(48.5,30){\circle*{1}}
\put(17,30.25){\circle*{1}}
\put(7,49.75){\circle*{1.12}}
\put(19.5,41.25){\circle*{.71}}
\put(37.25,59){\circle*{1}}
\put(47,42.25){\circle*{1.12}}
\put(112.75,65.5){\circle*{1}}
\put(123.5,44.5){\circle*{1.58}}
\put(145.5,54.75){\circle*{1.12}}
\put(78.25,48.5){\circle*{.71}}
\put(71,12.25){\makebox(0,0)[cc]{Figure 1}}
\multiput(114.75,35.25)(-.03125,-.03125){8}{\line(0,-1){.03125}}
\multiput(114.5,35)(.053633218,.033737024){578}{\line(1,0){.053633218}}
\put(105.75,41.75){\circle*{.71}}
\put(114.5,35.75){\circle*{1.5}}
\put(121.5,58.75){\circle*{.5}}
\put(106.25,57.25){\circle*{1}}
\put(26.75,65.75){\circle*{1.58}}
\put(53.75,65){\circle*{2.12}}
\put(65,49.25){\circle*{2.24}}
\put(46.75,42){\circle*{1.58}}
\put(48.25,30.25){\circle*{2}}
\put(17,30.25){\circle*{1.12}}
\put(17.25,29.75){\circle*{.71}}
\put(17,30.5){\circle*{1}}
\put(7,49.5){\circle*{1}}
\put(19.25,41.25){\circle*{1.41}}
\put(36.75,59.25){\circle*{1.58}}
\put(19.25,41.5){\circle*{1}}
\put(78.25,49){\circle*{1.41}}
\put(112.75,65.75){\circle*{1.58}}
\put(121,59){\circle*{1.12}}
\put(106.25,57){\circle*{1}}
\put(105.5,41.75){\circle*{1}}
\put(105.25,42){\circle*{1.12}}
\put(78.25,49){\circle*{.71}}
\put(145.5,55){\circle*{1.12}}
\put(114.25,35.5){\circle*{1.12}}
\put(7,49.75){\circle*{1.58}}
\multiput(106,57)(.2583333,.0333333){60}{\line(1,0){.2583333}}
\multiput(121.5,59)(-.03125,.03125){8}{\line(0,1){.03125}}
\multiput(105.5,42)(.2433333,.0333333){75}{\line(1,0){.2433333}}
\put(123.75,44.5){\line(0,1){0}}
\put(33.25,22){\makebox(0,0)[cc]{$G_1$}}
\put(114,28.25){\makebox(0,0)[cc]{$G_2$}}
\end{picture}

\indent We have seen that the Laplacian energy of the graph $D[G]$ is twice the Laplacian energy of $G$ when $G$ is regular. But this need not be true for the graph $G^*$ as seen from the Laplacian energy of $G^*$ given above. However we have the following observation.

\begin{theorem}
Let $G^*$ be the extended double cover of the bipartite graph $G$. Then $LE(G^*)=2 LE(G)$ if and only if $|\mu_i-\frac{2m}{n}|\geq 1$ for $1\leq i\leq n$.
\end{theorem}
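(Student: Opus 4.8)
The plan is to reduce the whole statement to a single pointwise inequality on the shifted Laplacian eigenvalues. First I would invoke the bipartiteness of $G$, which forces its Laplacian and signless Laplacian spectra to coincide, so $\mu_i=\mu_i^{+}$ for all $i$; combined with Theorem 3.2 this says the $L$-spectrum of $G^*$ is $\mu_1,\dots,\mu_n,\mu_1+2,\dots,\mu_n+2$. Since $G^*$ has $2n$ vertices and, by the degree observation in Section 1, average degree $\frac{2m}{n}+1$, the Laplacian energy formula already recorded just before the theorem becomes
\begin{align*}
LE(G^*)=\sum_{i=1}^{n}\left|\mu_i-\frac{2m}{n}-1\right|+\sum_{i=1}^{n}\left|\mu_i-\frac{2m}{n}+1\right|.
\end{align*}
Setting $a_i=\mu_i-\frac{2m}{n}$, this reads $\sum_{i=1}^{n}\bigl(|a_i-1|+|a_i+1|\bigr)$, while $LE(G)=\sum_{i=1}^{n}|a_i|$.

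The heart of the argument is the elementary inequality $|a-1|+|a+1|\ge 2|a|$, valid for every real $a$, with equality precisely when $|a|\ge 1$. I would verify this by the three cases: if $a\ge 1$ the left side is $(a-1)+(a+1)=2a=2|a|$; if $a\le -1$ it is $(1-a)+(-1-a)=-2a=2|a|$; and if $|a|<1$ the two terms combine to the constant $2$, which strictly exceeds $2|a|$. Stating the equality condition cleanly (equality iff $|a|\ge 1$) is the only content here.

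Summing over $i$, I would conclude that $LE(G^*)\ge 2LE(G)$ unconditionally, with equality if and only if $|a_i|\ge 1$ for every index $i$, that is, $\bigl|\mu_i-\frac{2m}{n}\bigr|\ge 1$ for all $1\le i\le n$. This settles both directions at once. There is no real obstacle: the only points to be careful about are using the bipartite hypothesis to replace each $\mu_i^{+}$ by $\mu_i$ (without it the cross term would involve $\mu_i^{+}-\frac{2m}{n}+1$ and the clean cancellation fails), and noting that the boundary case $|a_i|=1$ still yields equality, so the inequality in the statement is correctly non-strict.
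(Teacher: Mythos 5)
Your proposal is correct, and it is organised differently from the paper's proof in a way that is worth noting. The paper proves the forward direction by splitting the sum according to whether $\mu_i\geq \frac{2m}{n}$ or $\mu_i<\frac{2m}{n}$ and checking that the $+1$ and $-1$ contributions cancel under the hypothesis $|\mu_i-\frac{2m}{n}|\geq 1$; for the converse it assumes $|\mu_j-\frac{2m}{n}|<1$ for some $j$ and refers the reader to the contradiction argument in the converse of Theorem 8 of the cited paper of Bonifacio, Vinagre and Abreu, so that direction is not carried out in the text. Your single inequality $|a-1|+|a+1|\geq 2|a|$, with equality precisely when $|a|\geq 1$, subsumes both halves: summing it over $i$ with $a_i=\mu_i-\frac{2m}{n}$ gives $LE(G^*)\geq 2LE(G)$ unconditionally, and the equality case of the sum is exactly the conjunction of the pointwise equality cases, which yields the stated equivalence in one stroke. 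This buys a self-contained converse (no appeal to an external argument) and the extra, slightly stronger conclusion that $LE(G^*)\geq 2LE(G)$ always holds for bipartite $G$; the paper's case-split computation is essentially your inequality written out term by term for the forward direction only. Your use of bipartiteness to replace $\mu_i^{+}$ by $\mu_i$ and your computation of the average degree $\frac{2m}{n}+1$ of $G^*$ both match the paper.
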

\begin{proof}
 Let $\mu_i$ for $1\leq i\leq n$ be the $L$-spectra of the graph $G$. Then by Corollary 3.4, the $L$-spectra of $G^*$ is $\mu_i,~~ \mu_i+2$ for $1\leq i\leq n$. Assume that $|\mu_i-\frac{2m}{n}|\geq 1$, for all $i=1,2,\cdots,n$. Then since average vertex degree of $G^*$ is $\frac{2m}{n}+1$, we have
 $$|\mu_i-\frac{2m}{n}+1|=\left \{\begin{array}{lr}|\mu_i-\frac{2m}{n}|+1, &\mbox{if $\mu_i \geq \frac{2m}{n}$}\\
 |\mu_i-\frac{2m}{n}|-1, &\mbox{if $\mu_i<\frac{2m}{n},$}
 \end{array} \right.$$
 and
 $$|\mu_i-\frac{2m}{n}-1|=\left \{\begin{array}{lr}|\mu_i-\frac{2m}{n}|-1, &\mbox{if $\mu_i \geq \frac{2m}{n}$}\\
  |\mu_i-\frac{2m}{n}|+1, &\mbox{if $\mu_i<\frac{2m}{n}.$}
  \end{array} \right.$$
  Therefore,
 \begin{align*}
&LE(G^*)\\&=\sum\limits_{i=1}^{n}|\mu_i-\frac{2m}{n}-1|+\sum\limits_{i=1}^{n}|\mu_i-\frac{2m}{n}+1|
=\sum\limits_{i=1}^{n} \left (|\mu_i-\frac{2m}{n}-1|+|\mu_i-\frac{2m}{n}+1| \right)\\&=\sum_{\mu_i \geq \frac{2m}{n}} \left (|\mu_i-\frac{2m}{n}-1|+|\mu_i-\frac{2m}{n}+1| \right)+\sum_{\mu_i<\frac{2m}{n}} \left (|\mu_i-\frac{2m}{n}-1|+|\mu_i-\frac{2m}{n}+1| \right)\\&
=\sum_{\mu_i \geq \frac{2m}{n}} \left (|\mu_i-\frac{2m}{n}|-1+|\mu_i-\frac{2m}{n}|+1 \right)+\sum_{\mu_i<\frac{2m}{n}} \left (|\mu_i-\frac{2m}{n}|+1+|\mu_i-\frac{2m}{n}|-1 \right)\\&
=2\sum_{\mu_i \geq \frac{2m}{n}} |\mu_i-\frac{2m}{n}| +2\sum_{\mu_i<\frac{2m}{n}} |\mu_i-\frac{2m}{n}|=2 LE(G).
 \end{align*}
 Conversely, suppose that $LE(G^*)=2 LE(G)$. We will show that $|\mu_i-\frac{2m}{n}|\geq 1$ for all $1\leq i\leq n$. We prove this by contradiction. Assume that $|\mu_i-\frac{2m}{n}|< 1$, for some $\lambda_j$. Putting $\beta_i=\mu_i-\frac{2m}{n}$, and using the same argument as used in the converse of Theorem 8 in \cite{bva} we arrive at a contradiction.
 \end{proof}
 \indent If $G$ is a graph satisfying the conditions of Theorem 4.2, then clearly the graphs $G^*$ and $G\cup G$ are $L$-equienergetic. We now obtain some new families of $L$-equienergetic graphs by means of the graphs $G^*$, $G^{k*}$, $D[G]$ and $D^k[G]$.
 \begin{theorem}
 Let $G_1(n,m)$ be a graph having $L$-spectra and $Q$-spectra respectively as $\mu_i$ and $\mu_i^+$ and let $G_2(n,m)$ be another graph having  $L$-spectra and $Q$-spectra respectively as $\lambda_i$ and $\lambda_i^+$ for $i=1,2,\cdots,n$. Then for $p\geq 2n+k$ and $m\leq \frac{(k-1)n}{2}+\frac{k^2}{4}$, $k\geq 3$, we have $LE(G_1^*\vee \bar{K_p})= LE(G_2^*\vee \bar{K_p})$.
 \end{theorem}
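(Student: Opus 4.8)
The plan is to compute the Laplacian spectrum of $G_j^*\vee\bar{K_p}$ explicitly for $j=1,2$ and then observe that, once $p$ is large enough, the Laplacian energy is a function only of the common parameters $n$, $m$, $p$, so the two values coincide.

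First I would use Theorem 3.2 to write the $L$-spectrum of $G_1^*$ as $\mu_1,\dots,\mu_{n-1},\mu_n=0$ together with $\mu_1^{+}+2,\dots,\mu_n^{+}+2$; these are $2n$ numbers, of which exactly one (namely $\mu_n=0$) is zero. Since $\bar{K_p}$ has all-zero $L$-spectrum, Lemma 2.3 applied with $n_1=2n$, $n_2=p$ gives the $L$-spectrum of $G_1^*\vee\bar{K_p}$ as
\begin{align*}
2n+p,\quad 2n\ (p-1\text{ times}),\quad p+\mu_i\ (1\le i\le n-1),\quad p+\mu_i^{+}+2\ (1\le i\le n),\quad 0.
\end{align*}
This graph has $N=2n+p$ vertices and $M=2np+(2m+n)$ edges (the join contributes $2np$ edges and $G_1^*$ itself has $2m+n$), so its average degree is $\overline{d}=\tfrac{2M}{N}=\tfrac{4np+4m+2n}{2n+p}$. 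The quantities $N$, $M$, $\overline{d}$ depend only on $n,m,p$, hence are identical for $G_2^*\vee\bar{K_p}$, whose spectrum is the same with $\mu_i,\mu_i^{+}$ replaced by $\lambda_i,\lambda_i^{+}$.

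The decisive step is to show that the hypotheses force $p\ge\overline{d}$, so that every shifted eigenvalue sits above the average degree. A direct manipulation shows $p\ge\overline{d}$ is equivalent to $p(p-2n)\ge 4m+2n$. From $p\ge 2n+k$ one gets $p(p-2n)\ge(2n+k)k=2nk+k^2$, while the edge bound $m\le\frac{(k-1)n}{2}+\frac{k^2}{4}$ rearranges to $4m+2n\le 2nk+k^2$; combining the two inequalities gives $p(p-2n)\ge 4m+2n$, which is exactly $p\ge\overline{d}$. Consequently $p+\mu_i-\overline{d}\ge p-\overline{d}\ge 0$ and $p+\mu_i^{+}+2-\overline{d}\ge 0$ for every $i$, and likewise for the $\lambda$'s. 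I expect this to be the main obstacle: the stated bounds on $p$ and $m$ (with $k\ge 3$ guaranteeing a nonvacuous range) are precisely what is needed to push all the shifted eigenvalues past $\overline{d}$ uniformly for both graphs.

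Finally I would take the difference of the two Laplacian energies. The eigenvalues $2n+p$, the $(p-1)$ copies of $2n$, and the single $0$ are common to both spectra and cancel. For the remaining terms the absolute values drop by the previous step, leaving
\begin{align*}
LE(G_1^*\vee\bar{K_p})-LE(G_2^*\vee\bar{K_p})=\sum_{i=1}^{n-1}(\mu_i-\lambda_i)+\sum_{i=1}^{n}(\mu_i^{+}-\lambda_i^{+}).
\end{align*}
Since both the Laplacian trace and the signless Laplacian trace equal $2m$, we have $\sum_{i=1}^{n-1}\mu_i=\sum_{i=1}^{n}\mu_i^{+}=2m=\sum_{i=1}^{n-1}\lambda_i=\sum_{i=1}^{n}\lambda_i^{+}$, so each sum vanishes and the difference is $0$, establishing the claimed equality.
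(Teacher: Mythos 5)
Your proof is correct and follows essentially the same route as the paper: decompose the $L$-spectrum of $G_j^*\vee\bar{K_p}$ via Theorem 3.2 and Lemma 2.3, use the hypotheses on $p$ and $m$ to drop the absolute values, and invoke $\sum\mu_i=\sum\mu_i^{+}=2m$. The only cosmetic differences are that you package the positivity step as the single clean inequality $p\ge\overline{d}$ (equivalently $p(p-2n)\ge 4m+2n$) instead of checking each eigenvalue type separately, and you finish by differencing the two energies rather than deriving the paper's closed form $LE(G_1^*\vee\bar{K_p})=6n+(p-2n)\frac{2m^{\prime}}{n^{\prime}}+4m$.
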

 \begin{proof}
 Let $G_1^*$ be the extended double cover of the graph $G_1$. Then by Theorem 3.2, the $L$-spectra of $G_1^*$ is $\mu_i,~~ \mu_i^{+}+2$ for $1\leq i\leq n$ and so by Lemma 2.3, the $L$-spectra of $G_1^*\vee \bar{K_p}$ is $p+2n,~~ p+\mu_i (1\leq i\leq n-1), p+\mu_i^{+}+2 (1\leq i\leq n), 2n$ ($(p-1)$ times), $0$, with average vertex degree $$\frac{2m^{\prime}}{n^{\prime}}=\frac{4m+4pn+2n}{p+2n}.$$ Therefore,
 \begin{align*}
 LE(G_1^*\vee \bar{K_p})&=|p+2n-\frac{2m^{\prime}}{n^{\prime}}|+\sum\limits_{i=1}^{n-1}|p+\mu_i-\frac{2m^{\prime}}{n^{\prime}}|+|0-\frac{2m^{\prime}}{n^{\prime}}|\\&+\sum\limits_{i=1}^{n}|p+\mu_i^{+}+2-\frac{2m^{\prime}}{n^{\prime}}|+(p-1)|2n-\frac{2m^{\prime}}{n^{\prime}}|.
 \end{align*}
 Now, if $p\geq 2n+k$ and $m\leq \frac{(k-1)n}{2}+\frac{k^2}{4}$, $k\geq 3$, we have for $i=1,2,\cdots,n$,
 $$p+\mu_i-\frac{2m^{\prime}}{n^{\prime}}=p+\mu_i-\frac{4m+4pn+2n}{p+2n}=\frac{p(p-2n)+(2n+p)\mu_i-4m-2n}{p+2n}$$ $$\geq \frac{k(2n+k)-2(k-1)n-k^2-2n}{p+2n}=0,$$
 and $$p+\mu_i^{+}+2-\frac{2m^{\prime}}{n^{\prime}}=p+\mu_i^{+}+2-\frac{4m+4pn+2n}{p+2n}=\frac{p(p-2n)+(2n+p)\mu_i^{+}+2(p+n)-4m}{p+2n}$$ $$\geq \frac{k(2n+k)-2(k-1)n-k^2+2(3n+k)}{p+2n}=\frac{8n+2k}{p+2n}\geq 0.$$
 So we have
 \begin{align*}
 LE(G_1^*\vee \bar{K_p})&=(p+2n-\frac{2m^{\prime}}{n^{\prime}})+(n-1)(p-\frac{2m^{\prime}}{n^{\prime}})+n(p+2-\frac{2m^{\prime}}{n^{\prime}})\\&+(p-1)(\frac{2m^{\prime}}{n^{\prime}}-2n)+\frac{2m^{\prime}}{n^{\prime}}+4m\\&
 =6n+(p-2n)\frac{2m^{\prime}}{n^{\prime}}+4m.
 \end{align*}
\indent From this it is clear that the Laplacian energy of $G_1^*$ depends only on the parameters $p, m$ and $n$. Since these parameters are also same for $G_2^*$, it follows that $LE(G_1^*\vee \bar{K_p})= LE(G_2^*\vee \bar{K_p})$.  In fact all the graphs of the family ($G_i^*\vee \bar{K_p}$), $i=1,2,\cdots$, having the same parameters $n, p$ and $m$ satisfying the conditions in the hypothesis are mutually $L$-equienergetic.
 \end{proof}
 \indent Let $G^{t*}$ be the $t$-th iterated extended double cover of the graph $G$. We have the following generalization of Theorem 4.3.
 \begin{theorem}
 Let $G(n,m)$ be a graph having $L$-spectra and $Q$-spectra respectively as $\mu_i$ and $ \mu_i^+$ for $1\leq i\leq n $. For $p\geq 2^tn+k$ and $m\leq \frac{(k-t)n}{2}+\frac{k^2}{2^{t+1}}$, $k\geq t+2, t\geq 1$, we have $LE(G^{t*}\vee \bar{K_p})=2^tn(t+2)+(p-2^tn)\frac{2m^{\prime}}{n^{\prime}}+2^t(2m).$
 \end{theorem}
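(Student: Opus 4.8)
The plan is to run the same computation as in Theorem 4.3, but with the single cover $G^*$ replaced by its $t$-th iterate $G^{t*}$. The only data about $G^{t*}$ that the argument actually consumes are its order and the sum of its Laplacian eigenvalues; the rest of the spectral detail of Theorem 3.3 cancels. By construction $G^{t*}$ has $N:=2^t n$ vertices, and since $G^{t*}$ is connected (as $G$ is) the single copy of $\mu_n=0$ coming from the base term of Theorem 3.3 is its only zero eigenvalue, so it has exactly $N-1=2^t n-1$ nonzero Laplacian eigenvalues. Either by summing the multiplicities in Theorem 3.3, or by solving the edge recurrence $m_{j+1}=2m_j+2^j n$ (which gives $m_t=2^t m+t\,2^{t-1}n$ edges for $G^{t*}$), I would record the single invariant that the final collapse needs:
\begin{align*}
\sum_{\nu\neq 0}\nu = 2m_t = 2^{t+1}m + t\,2^t n .
\end{align*}

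Next I would feed this into Lemma 2.3. Since every Laplacian eigenvalue of $\bar{K_p}$ is $0$, the $L$-spectrum of $G^{t*}\vee\bar{K_p}$ is: $N+p$ once, $N$ with multiplicity $p-1$, the values $p+\nu$ as $\nu$ ranges over the $N-1$ nonzero eigenvalues of $G^{t*}$, and $0$ once. Here $n'=2^t n+p$ and $m'=m_t+2^t n\,p$, so the average degree is $\tfrac{2m'}{n'}=\tfrac{2^{t+1}m+t\,2^t n+2^{t+1}np}{2^t n+p}$.

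The heart of the proof is the sign analysis that strips the absolute values, and this is exactly where the hypotheses enter. For any $\nu\geq 0$ a direct rearrangement gives
\begin{align*}
p+\nu-\frac{2m'}{n'}=\frac{p(p-2^t n)+\nu(2^t n+p)-2^{t+1}m-t\,2^t n}{2^t n+p}.
\end{align*}
Discarding the nonnegative term $\nu(2^t n+p)$ and using $p\geq 2^t n+k$ (so $p(p-2^t n)\geq(2^t n+k)k=2^t nk+k^2$), the numerator is bounded below by $2^t nk+k^2-2^{t+1}m-t\,2^t n$, which is $\geq 0$ precisely when $m\leq\tfrac{(k-t)n}{2}+\tfrac{k^2}{2^{t+1}}$ — exactly the stated bound. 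Hence $p+\nu\geq\tfrac{2m'}{n'}$ for every nonzero $\nu$. A similar, easier estimate using $k\geq t+2>0$ shows $\tfrac{2m'}{n'}>2^t n$, so $N-\tfrac{2m'}{n'}<0$, while $N+p-\tfrac{2m'}{n'}>0$ since $\tfrac{2m'}{n'}<n'=N+p$.

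With the signs fixed I would expand
\begin{align*}
LE(G^{t*}\vee\bar{K_p})&=\Big(N+p-\tfrac{2m'}{n'}\Big)+(p-1)\Big(\tfrac{2m'}{n'}-N\Big)\\
&\quad+\sum_{\nu\neq 0}\Big(p+\nu-\tfrac{2m'}{n'}\Big)+\tfrac{2m'}{n'},
\end{align*}
rewrite $\sum_{\nu\neq 0}\big(p+\nu-\tfrac{2m'}{n'}\big)=(N-1)\big(p-\tfrac{2m'}{n'}\big)+\sum_{\nu\neq 0}\nu$, and substitute the two invariants $N=2^t n$ and $\sum_{\nu\neq 0}\nu=2^{t+1}m+t\,2^t n$. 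Collecting terms, the coefficient of $\tfrac{2m'}{n'}$ collapses to $p-2^t n$ and the remaining constant part collapses to $2N+\sum_{\nu\neq 0}\nu=2^t n(t+2)+2^t(2m)$, giving the claimed identity. The only genuinely delicate step is the sign analysis of the third paragraph: one must verify that the bound on $m$ is calibrated so that even the smallest shifted eigenvalue $p+\nu$ lands on the correct side of the average degree. The pleasant feature is that estimating $\nu$ crudely from below by $0$ already suffices, so the intricate multiplicity pattern of Theorem 3.3 never has to be used beyond the trace.
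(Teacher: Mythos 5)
Your proposal is correct, and while it follows the same skeleton as the paper's proof (apply Lemma 2.3 to $G^{t*}\vee\bar{K_p}$, use the two hypotheses to fix the signs of all terms $|p+\nu-\tfrac{2m'}{n'}|$, then sum), it executes the final summation by a genuinely cleaner route. The paper carries the full spectrum of $G^{t*}$ from Theorem 3.3, with its two interleaved families of shifted $L$- and $Q$-eigenvalues and binomial multiplicities $\binom{t-1}{r}$, $\binom{t-1}{r-1}$, and must then invoke the identities $\binom{t-1}{r}+\binom{t-1}{r-1}=\binom{t}{r}$ and $\sum_{r=1}^{t}r\binom{t}{r}=t2^{t-1}$ to collapse the sum. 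You observe that once the absolute values are stripped, the answer depends only on the order $N=2^tn$ of $G^{t*}$ and the trace $2m_t=2^{t+1}m+t2^tn$ of its Laplacian, so the multiplicity pattern never has to be touched; your edge recurrence $m_{j+1}=2m_j+2^jn$ reproduces exactly the numerator of the paper's average degree, and your collapsed constant $2N+2m_t=2^tn(t+2)+2^t(2m)$ agrees with the stated formula. Your sign analysis is also a genuine simplification of the paper's: bounding the worst case $\nu\geq 0$ at shift $r=0$ gives precisely the hypothesis $m\leq\tfrac{(k-t)n}{2}+\tfrac{k^2}{2^{t+1}}$, and all larger shifts $+2r$ only help, whereas the paper verifies each level separately. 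One small caveat: you describe the $N-1$ eigenvalues entering the join as the ``nonzero'' eigenvalues of $G^{t*}$, which presumes $G$ connected (a hypothesis neither the theorem nor the paper's proof imposes); but since any extra zeros contribute $p+0-\tfrac{2m'}{n'}\geq 0$, which is exactly the inequality you establish, and do not change the trace, your computation goes through verbatim in the disconnected case — it is only the wording that should be adjusted to ``the $N-1$ largest Laplacian eigenvalues.''
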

 \begin{proof}
 Let $G^{t*}$ be the $t$-th iterated extended double cover of the graph $G$. Then by Theorem 3.3, the $L$-spectra of $G^{t*}$ is $\mu_i~~ ({t \choose 0}$ times), $\mu_i+2~~ ({t-1 \choose 1}$ times), $\mu_i^{+}+2~~ ({t-1 \choose 0}$ times), $\mu_i+4 ~~({t-1 \choose 2}$ times), $\mu^{+}_i+4~~ ({t-1 \choose 1}$ times), $\cdots$, $\mu_i+2(t-2)~~ ({t-1 \choose t-2}$ times), $\mu_i^{+}+2(t-2)~~ ({t-1 \choose t-3}$ times), $\mu_i+2(t-1)~~ ({t-1 \choose t-1}$ times), $\mu_i^{+}+2(t-1)~~ ({t-1 \choose t-2}$ times), $\mu_i^{+}+2t ~~({t \choose t}$ times), where $1\leq i \leq n$. So by Lemma 2.3, the $L$-spectra of $G^{t*}\vee \bar{K_p}$ is $0, p+2^tn, 2^tn~~ (p-1$ times), $p+\mu_i ~~({t \choose 0}$ times) $(1\leq i\leq n-1)$, $p+\mu_i+2 ~~({t-1 \choose 1}$ times), $p+\mu_i^{+}+2 ~~({t-1 \choose 0}$ times), $p+\mu_i+4 ~~({t-1 \choose 2}$ times), $p+\mu^{+}_i+4 ~~({t-1 \choose 1}$ times), $\cdots$, $p+\mu_i+2(t-2) ~~({t-1 \choose t-2}$ times), $p+\mu_i^{+}+2(t-2) ~~({t-1 \choose t-3}$ times), $p+\mu_i+2(t-1) ~~({t-1 \choose t-1}$ times), $p+\mu_i^{+}+2(t-1) ~~({t-1 \choose t-2}$ times), $p+\mu_i^{+}+2t ({t \choose t}$ times),  $1\leq i \leq n$, with average vertex degree $$\frac{2m^{\prime}}{n^{\prime}}=\frac{2^{t+1}m+2^ttn+2^{t+1}pn}{p+2^tn}.$$
 Therefore,
 \begin{align*}
 LE(G^{t*}\vee \bar{K_p})&=\sum\limits_{i=1}^{n-1}|p+\mu_i-\frac{2m^{\prime}}{n^{\prime}}|+\sum\limits_{r=1}^{t-1}\sum\limits_{i=1}^{n}{t-1 \choose r}|p+\mu_i+2r-\frac{2m^{\prime}}{n^{\prime}}|\\&+\sum\limits_{r=1}^{t-1}\sum\limits_{i=1}^{n}{t-1 \choose r-1}|p+\mu_i^{+}+2r-\frac{2m^{\prime}}{n^{\prime}}|+\sum\limits_{i=1}^{n}|p+\mu_i+2t-\frac{2m^{\prime}}{n^{\prime}}|\\&+|p+2^tn-\frac{2m^{\prime}}{n^{\prime}}|+(p-1)|2^tn-\frac{2m^{\prime}}{n^{\prime}}|+|0-\frac{2m^{\prime}}{n^{\prime}}|.
 \end{align*}
 Now, if $p\geq 2^tn+k$ and $m\leq \frac{(k-t)n}{2}+\frac{k^2}{2^{t+1}}$, $k\geq t+2, t\geq 1$, we have for $i=1,2,\cdots,n$ and $r=0,1,\cdots,t$
 $$p+\mu_i+2r-\frac{2m^{\prime}}{n^{\prime}}=p+\mu_i+2r-\frac{2^{t+1}m+2^ttn+2^{t+1}pn}{p+2^tn}$$
 $$= \frac{p(p-2^tn)+2r(p+2^tn)+(p+2^tn)\mu_i-2^{t+1}m-2^ttn}{p+2^tn}$$ $$\geq \frac{k(2^tn+k)-k(2^tn+k)+2^ttn-2^ttn}{p+2^tn}=0.$$
Similarly, it can be seen that   $p+\mu_i^{+}+2r-\frac{2m^{\prime}}{n^{\prime}}\geq 0.$
So we have
\begin{align*}
&LE(G^{t*}\vee \bar{K_p})\\&=(n-1)(p-\frac{2m^{\prime}}{n^{\prime}})+\sum\limits_{r=1}^{t-1}\left(n(p+2r-\frac{2m^{\prime}}{n^{\prime}} )+2m \right)\left[{t-1 \choose r}+{t-1 \choose r-1} \right]\\&+(p+2^tn-\frac{2m^{\prime}}{n^{\prime}})+(p-1)(\frac{2m^{\prime}}{n^{\prime}}-2^tn)+\left (n(p+2t-\frac{2m^{\prime}}{n^{\prime}})+2m \right)+\frac{2m^{\prime}}{n^{\prime}}+2m\\&
=2^{t+1}n-pn(2^t-1)+(p-n)\frac{2m^{\prime}}{n^{\prime}}+\sum\limits_{r=1}^{t}{t \choose r}\left(n(p+2r-\frac{2m^{\prime}}{n^{\prime}})+2m \right)+2m\\&
=2^{t+1}n-pn(2^t-1)+(p-n)\frac{2m^{\prime}}{n^{\prime}}+n(2^t-1)(p-\frac{2m^{\prime}}{n^{\prime}})+(2^t-1)2m+2^ttn+2m\\&=2^tn(t+2)+(p-2^tn)\frac{2m^{\prime}}{n^{\prime}}+2^t(2m),
\end{align*}
where we have made use of the fact $\left[{t-1 \choose r}+{t-1 \choose r-1} \right]={t \choose r}$ and $\sum\limits_{r=1}^{t}r{t \choose r}=t2^{t-1}.$\\
\indent Clearly the Laplacian energy of the graph $(G^{t*}\vee \bar{K_p})$ depends only on the parameters $p, m, t$ and $n$. Therefore all the graphs of the families $(G_i^{t*}\vee \bar{K_p})$, where $t,i=1,2,\cdots,$ with the same parameters $p, m, t$ and $n$ satisfying the conditions in the hypothesis are mutually $L$-equienergetic.
 \end{proof}
 \indent Theorem 4.4 gives an infinite family of $L$-equienergetic graphs in various ways, firstly fix the value of $t$ and allow $p$ to vary we obtain families of $L$-equienergetic graphs with same $t$, secondly fix the value of $p$ and allow $t$ to vary we obtain families of $L$-equienergetic graphs with same $p$ and so on.
 \begin{corollary}
 Let $G(n,m)$ be a bipartite graph having $L$-spectra $\mu_i$ for $1\leq i\leq n $. For $p\geq 2^tn+k$ and $m\leq \frac{(k-t)n}{2}+\frac{k^2}{2^{t+1}}$, $k\geq t+2, t\geq 1$, we have $LE(G^{t*}\vee \bar{K_p})=2^tn(t+2)+(p-2^tn)\frac{2m^{\prime}}{n^{\prime}}+2^t(2m).$
 \end{corollary}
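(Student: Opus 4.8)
The plan is to obtain this as an immediate specialization of Theorem 4.4. The only difference between the two hypotheses is that $G$ is now assumed bipartite, and, as recalled just before Corollary 3.4, a bipartite graph has coinciding Laplacian and signless Laplacian spectra, so $\mu_i=\mu_i^{+}$ for every $1\leq i\leq n$. Thus $G$ meets all the conditions of Theorem 4.4, and because the closed form $2^tn(t+2)+(p-2^tn)\frac{2m^{\prime}}{n^{\prime}}+2^t(2m)$ obtained there is expressed purely in terms of $n$, $m$, $p$ and $t$, the signless Laplacian eigenvalues having already cancelled out, the very same expression is returned. This already settles the corollary.

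Should a self-contained argument be preferred, I would instead start from the simpler spectrum furnished by Corollary 3.4: for bipartite $G$ the $L$-spectra of $G^{t*}$ is $\mu_i+2r$ with multiplicity ${t \choose r}$, where $r=0,1,\dots,t$ and $1\leq i\leq n$. Feeding this into Lemma 2.3 yields the $L$-spectra of $G^{t*}\vee\bar{K_p}$, namely the join eigenvalues $0$, $p+2^tn$, and $2^tn$ (with multiplicity $p-1$), together with the shifted eigenvalues $p+\mu_i+2r$ carrying the multiplicities ${t \choose r}$, and with average vertex degree $\frac{2m^{\prime}}{n^{\prime}}=\frac{2^{t+1}m+2^ttn+2^{t+1}pn}{p+2^tn}$.

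The decisive step is then the sign analysis. Exactly as in Theorem 4.4, one checks that the hypotheses $p\geq 2^tn+k$, $m\leq\frac{(k-t)n}{2}+\frac{k^2}{2^{t+1}}$, $k\geq t+2$ and $t\geq 1$ force $p+\mu_i+2r-\frac{2m^{\prime}}{n^{\prime}}\geq 0$ for all $i$ and all $r=0,1,\dots,t$; the worst case occurs at $r=0$ and $\mu_i=0$, where the bound is met with equality, which is precisely what fixes the admissible ranges of $p$ and $m$. With every absolute value thereby removed at a constant sign, the Laplacian energy becomes a single sum which collapses under the binomial identities $\sum_{r=0}^{t}{t \choose r}=2^t$ and $\sum_{r=1}^{t}r{t \choose r}=t2^{t-1}$, returning the stated value.

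The only genuine work is the bookkeeping of the multiplicities ${t \choose r}$ and of the additive term $2m$ attached to each shifted block of eigenvalues, and this is identical to the computation carried out in the proof of Theorem 4.4. Since no step there used the distinction between $\mu_i$ and $\mu_i^{+}$, the bipartite assumption plays no role beyond replacing the spectrum of Theorem 3.3 by its simplified form in Corollary 3.4; I therefore anticipate no real obstacle, only the need to mirror the earlier positivity estimate faithfully.
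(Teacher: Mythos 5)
Your proposal is correct and takes essentially the same route as the paper, whose entire proof reads ``the proof follows by using Corollary 3.4 and the same argument as in Theorem 4.4''. Your opening observation --- that the statement already follows verbatim from Theorem 4.4 because its closed form involves only $n$, $m$, $p$ and $t$ --- is, if anything, an even more immediate justification than the one the paper records.
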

 \begin{proof}
 The proof follows by using the Corollary 3.4, and the same argument as in Theorem 4.4.
 \end{proof}
 \indent From Theorem 4.4, it is clear if $G_1$ and $G_2$ are any two graphs with the same parameters, then we can always find tripartite graphs $(G_1^*\vee \bar{K_p})$ and $(G_2^*\vee \bar{K_p})$ having the same Laplacian energy.
 Next we show the construction of $L$-equienergetic graphs by means of graphs $D[G]$ and $D^k[G]$.
 \begin{theorem}
Let $D[G]$ be the double graph of the graph $G$. Then, for $p\geq 2n+k$ and $m\leq \frac{k(2n+k)}{8}$, $k\geq 4$, we have $LE(D[G]\vee \bar{K_p})=4n+(p-2n)\frac{2m^{\prime}}{n^{\prime}}+8m.$
 \end{theorem}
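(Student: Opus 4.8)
The plan is to follow the same recipe used in the proofs of Theorems 4.3 and 4.4, now specialised to the double graph $D[G]=D^2[G]$. First I would record the Laplacian spectrum of the building blocks. By Theorem 4.1 with $k=2$, the $L$-spectrum of $D[G]$ is $2\mu_i$ for $1\leq i\leq n$ together with $2d_i$ ($n$ times); since $\mu_n=0$ exactly one of these eigenvalues vanishes, and their sum is $2\sum_{i=1}^n\mu_i+2\sum_{i=1}^n d_i=8m$, so $D[G]$ has $2n$ vertices and $4m$ edges. Feeding this into Lemma 2.3 with $G_1=D[G]$ ($n_1=2n$) and $G_2=\bar{K_p}$ ($n_2=p$, whose Laplacian eigenvalues are all $0$), I obtain the $L$-spectrum of $D[G]\vee\bar{K_p}$ as $2n+p$ (once), $2n$ ($(p-1)$ times), $p+2\mu_i$ ($1\leq i\leq n-1$), $p+2d_i$ ($1\leq i\leq n$) and $0$ (once). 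The join has $n'=2n+p$ vertices and $m'=4m+2np$ edges, so its average degree is $\frac{2m'}{n'}=\frac{8m+4np}{2n+p}$, which I abbreviate as $\bar d$.

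The heart of the argument is to determine the sign of each term $\left|\nu-\bar d\right|$ in $LE(D[G]\vee\bar{K_p})=\sum_\nu\left|\nu-\bar d\right|$, and this is precisely where the hypotheses $p\geq 2n+k$, $m\leq\frac{k(2n+k)}{8}$ and $k\geq 4$ are used. Multiplying through by the positive quantity $2n+p$, the inequality $p+2\mu_i-\bar d\geq 0$ becomes $p(p-2n)+2\mu_i(2n+p)-8m\geq 0$; as $\mu_i\geq 0$ it is enough to show $p(p-2n)\geq 8m$, and indeed $p(p-2n)\geq(2n+k)k=k(2n+k)\geq 8m$ by the two bounds. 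The same estimate gives $p+2d_i-\bar d\geq 0$. For the eigenvalue $2n$ I would show instead that $\bar d-2n=\frac{2np-4n^2+8m}{2n+p}\geq 0$, which holds because $p\geq 2n$ forces $2np\geq 4n^2$; hence $\left|2n-\bar d\right|=\bar d-2n$. Finally $2n+p-\bar d>0$ since $2n+p$ is the largest eigenvalue, and $\left|0-\bar d\right|=\bar d$.

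With all signs resolved, the final step is the bookkeeping:
\begin{align*}
LE(D[G]\vee\bar{K_p})&=(2n+p-\bar d)+(p-1)(\bar d-2n)+\sum_{i=1}^{n-1}(p+2\mu_i-\bar d)\\
&\quad+\sum_{i=1}^{n}(p+2d_i-\bar d)+\bar d.
\end{align*}
Here I would use $\sum_{i=1}^{n-1}\mu_i=\sum_{i=1}^n\mu_i=2m$ and $\sum_{i=1}^n d_i=2m$, and collect the constant part and the coefficient of $\bar d$ separately. The coefficient of $p$ in the constant part cancels, leaving $4n+8m$, while the coefficient of $\bar d$ collapses to $p-2n$, yielding $LE(D[G]\vee\bar{K_p})=4n+(p-2n)\frac{2m'}{n'}+8m$, exactly as claimed.

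I expect the main obstacle to be not the concluding summation, which is routine once the signs are fixed, but the clean verification of the sign conditions from the stated bounds — in particular the inequality $p(p-2n)\geq 8m$. I would take care to confirm that the thresholds $p\geq 2n+k$ and $m\leq\frac{k(2n+k)}{8}$ (with $k\geq 4$) are exactly strong enough to make all the purportedly nonnegative eigenvalue deviations nonnegative simultaneously, since a weaker bound would flip a sign and alter the closed form.
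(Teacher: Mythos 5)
Your proposal is correct and follows essentially the same route as the paper: Theorem 4.1 with $k=2$ for the spectrum of $D[G]$, Lemma 2.3 for the join with $\bar{K_p}$, the sign analysis via $p(p-2n)\geq k(2n+k)\geq 8m$, and the final summation using $\sum\mu_i=\sum d_i=2m$. Your explicit verification that $\frac{2m'}{n'}-2n\geq 0$ (needed to resolve $|2n-\frac{2m'}{n'}|$) is a small point the paper leaves implicit, but the argument is otherwise identical.
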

 \begin{proof}
 Let $\mu_i$ and $d_i$, for $i=1,2,\cdots,n$ be the $L$-spectra and the degree sequence of the graph $G$, then by Theorem 4.1, (for $k=2$) the $L$-spectra of the graph $D[G]$ is $2\mu_i$ and $ 2d_i$ for $i=1,2,\cdots,n$ and hence by Lemma 2.3, the $L$-spectra of the graph $D[G]\vee \bar{K_p}$ is $p+2n,~~ p+2\mu_i (1\leq i\leq n-1),~~ p+2d_i (1\leq i\leq n)$, $2n$ ($p-1$ times), $ 0,$ with average vertex degree $$\frac{2m^{\prime}}{n^{\prime}}=\frac{8m+4pn}{p+2n}.$$
 So, if $p\geq 2n+k$ and $m\leq \frac{k(2n+k)}{8}$, $k\geq 4$, we have for $i=1,2,\cdots,n$
 $$p+2\mu_i-\frac{2m^{\prime}}{n^{\prime}}=p+2\mu_i-\frac{8m+4pn}{p+2n}$$ $$=\frac{p(p-2n)+2(p+2n)\mu_i-8m}{p+2n}\geq\frac{k(2n+k)-k(2n+k)}{p+2n}=0.$$
 Similarly, we see that $$p+2d_i-\frac{2m^{\prime}}{n^{\prime}}\geq 0.$$
 Therefore,
 \begin{align*}
 LE(D[G]\vee \bar{K_p})&=|p+2\mu_i-\frac{2m^{\prime}}{n^{\prime}}|+\sum\limits_{i=1}^{n-1}|p+2\mu_i-\frac{2m^{\prime}}{n^{\prime}}|+\sum\limits_{i=1}^{n}|p+2d_i-\frac{2m^{\prime}}{n^{\prime}}|\\&+(p-1)|2n-\frac{2m^{\prime}}{n^{\prime}}|+|0-\frac{2m^{\prime}}{n^{\prime}}|\\&
 =4n+(p-2n)\frac{2m^{\prime}}{n^{\prime}}+8m.
 \end{align*}
 \indent Clearly the Laplacian energy of the graph $D[G]\vee \bar{K_p}$ depends only on the parameters $p, m$ and $n$. Therefore all the graphs of the family $(D[G_i]\vee \bar{K_p})$, $i=1,2,\cdots$ with the same parameters $p, m$ and $n$ satisfying the conditions of the Theorem, are mutually $L$-equienergetic.
 \end{proof}
 \indent If $D^k[G]$ is the $k$-fold graph of the graph $G$, we have the following generalization of Theorem 4.6.
 \begin{theorem}
 Let $D^k[G]$ be the $k$-fold graph of the graph $G$. Then for $p\geq kn+t$ and $m\leq \frac{t(kn+t)}{2k^2}$, $t\geq 2k, k\geq 2$, we have $LE(D^k[G]\vee \bar{K_p})=2kn+(p-nk)\frac{2m^{\prime}}{n^{\prime}}+2mk^2.$
 \end{theorem}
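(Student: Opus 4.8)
The plan is to follow the template of Theorem 4.6 verbatim, with the $k$-fold graph in place of the double graph. First I would invoke Theorem 4.1 to record that the $L$-spectra of $D^k[G]$ consists of $k\mu_i$ for $1\le i\le n$ together with each $kd_i$ taken $k-1$ times; thus $D^k[G]$ has $kn$ vertices and, since its average degree is $k\frac{2m}{n}$, exactly $k^2m$ edges. The single zero eigenvalue is $k\mu_n=0$, so the nonzero Laplacian eigenvalues of $D^k[G]$ are $k\mu_i$ $(1\le i\le n-1)$ and $kd_i$ $(1\le i\le n$, each with multiplicity $k-1)$. Applying Lemma 2.3 to the join with $\bar{K_p}$ (all of whose Laplacian eigenvalues vanish), the $L$-spectra of $D^k[G]\vee\bar{K_p}$ becomes $p+kn$, $kn$ $(p-1$ times$)$, $p+k\mu_i$ $(1\le i\le n-1)$, $p+kd_i$ $(1\le i\le n$, each $k-1$ times$)$ and $0$, on $n'=kn+p$ vertices with average vertex degree
\[
\frac{2m'}{n'}=\frac{2k^2m+2knp}{kn+p}.
\]

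The heart of the argument, as in Theorem 4.6, is the sign analysis of each eigenvalue relative to $\frac{2m'}{n'}$. A direct computation gives
\[
p+k\mu_i-\frac{2m'}{n'}=\frac{p(p-kn)+k\mu_i(kn+p)-2k^2m}{kn+p},
\]
and since $\mu_i\ge 0$ it suffices to establish $p(p-kn)\ge 2k^2m$. This is precisely where the hypotheses enter: from $p\ge kn+t$ we get $p(p-kn)\ge(kn+t)t=t(kn+t)$, while $m\le\frac{t(kn+t)}{2k^2}$ yields $2k^2m\le t(kn+t)$, and the two combine to give $p(p-kn)\ge 2k^2m$ (the condition $t\ge 2k$, $k\ge2$, generalizing $k\ge4$ in Theorem 4.6, keeps the admissible range of $m$ meaningful). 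The identical estimate with $d_i\ge0$ in place of $\mu_i$ shows $p+kd_i-\frac{2m'}{n'}\ge0$, and $p+kn-\frac{2m'}{n'}\ge0$ follows because $p+kn$ is the largest eigenvalue, whereas $kn-\frac{2m'}{n'}=\frac{kn(kn-p)-2k^2m}{kn+p}<0$ since $p>kn$. I expect this sign verification to be the main (indeed the only nontrivial) obstacle.

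With all signs fixed, the computation reduces to a bookkeeping sum. Removing the absolute values, I would write
\[
LE(D^k[G]\vee\bar{K_p})=\Big(p+kn-\tfrac{2m'}{n'}\Big)+(p-1)\Big(\tfrac{2m'}{n'}-kn\Big)+\sum_{i=1}^{n-1}\big(p+k\mu_i-\tfrac{2m'}{n'}\big)+(k-1)\sum_{i=1}^{n}\big(p+kd_i-\tfrac{2m'}{n'}\big)+\tfrac{2m'}{n'},
\]
and then apply $\sum_{i=1}^{n-1}\mu_i=\sum_{i=1}^{n}d_i=2m$ to evaluate the two sums. Collecting the coefficient of $\frac{2m'}{n'}$ gives $-1+(p-1)-(n-1)-(k-1)n+1=p-kn$, while in the remaining terms all the mixed $pkn$, $np$ and $km$ contributions cancel, leaving the constant part equal to $2kn+2k^2m$. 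This produces $LE(D^k[G]\vee\bar{K_p})=2kn+(p-nk)\frac{2m'}{n'}+2mk^2$, the asserted formula, which specializes to Theorem 4.6 on setting $k=2$. As a consequence, since this value depends only on $p,m,n$ and $k$, all graphs $D^k[G_i]\vee\bar{K_p}$ built from graphs $G_i(n,m)$ satisfying the hypotheses are mutually $L$-equienergetic.
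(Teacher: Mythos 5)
Your proposal is correct and follows essentially the same route as the paper's own proof: Theorem 4.1 plus Lemma 2.3 to list the spectrum of $D^k[G]\vee\bar{K_p}$, the bound $p(p-kn)\geq t(kn+t)\geq 2k^2m$ to fix the signs, and then the bookkeeping sum via $\sum_{i=1}^{n-1}\mu_i=\sum_{i=1}^{n}d_i=2m$. Your write-up is in fact slightly more careful than the paper's (you make the sign of $kn-\frac{2m'}{n'}$ and the edge count $k^2m$ explicit), but the argument is the same.
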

 \begin{proof}
 Let $\mu_i$ and $d_i$ for $i=1,2,\cdots,n$ be respectively the $L$-spectra and the degree sequence of the graph $G$. Then by Theorem 4.1, the $L$-spectra of the graph $D^k[G]$ is $k\mu_i,~~ kd_i$ ($(k-1)n$ times) and so by Lemma 2.3, the $L$-spectra of the graph $D^k[G]\vee \bar{K_p}$ is $ p+kn,~~ p+k\mu_i ~~(1\leq i\leq n-1)$, $p+kd_i$ ($(k-1)n$ times) $(1\leq i\leq n)$, $kn$ ($(p-1)$ times), $0,$ with average vertex degree  $$\frac{2m^{\prime}}{n^{\prime}}=\frac{2k^2m+2pkn}{p+kn}.$$
 So, if $p\geq kn+t$ and $m\leq \frac{t(kn+t)}{2k^2}$, $t\geq 2k, k\geq 2$, we have for $i=1,2,\cdots,n$
 $$p+k\mu_i-\frac{2m^{\prime}}{n^{\prime}}=p+k\mu_i-\frac{2k^2m+2pkn}{p+kn}$$
 $$=\frac{p(p-kn)-2k^2m+k(p+kn)\mu_i}{p+kn}\geq \frac{t(kn+t)-t(kn+t)}{p+kn}=0.$$
 Similarly, we see that $$p+kd_i-\frac{2k^2m+2pkn}{p+kn}\geq 0.$$
 Therefore,
 \begin{align*}
 &LE(D^k[G]\vee \bar{K_p})=|p+kn-\frac{2m^{\prime}}{n^{\prime}}|+\sum\limits_{i=1}^{n-1}|p+k\mu_i-\frac{2m^{\prime}}{n^{\prime}}|+
 (k-1)\sum\limits_{i=1}^{n}|p+2d_i-\frac{2m^{\prime}}{n^{\prime}}|\\&+(p-1)|kn-\frac{2m^{\prime}}{n^{\prime}}|+|0-\frac{2m^{\prime}}{n^{\prime}}|
 =2kn+2mk^2+(p-nk)\frac{2m^{\prime}}{n^{\prime}}.
 \end{align*}
 From this it is clear the Laplacian energy of the graph $(D^k[G]\vee \bar{K_p})$ depends on the parameters $p, k, m$ and $n$. Therefore all the graphs of the families $(D^k[G_i]\vee \bar{K_p})$ where $i=1,2,\cdots$, and $k=2,3,\cdots$ having the same parameters $p, m, k$ and $n$ satisfying the conditions of the Theorem, are mutually $L$-equienergetic.
 \end{proof}
 \indent Theorem 4.7, generates families of $L$-equienergetic graphs in various ways. If we allow $p$ to vary and keep $k$ fixed, we obtain an infinite family of $L$-equienergetic graphs with same $k$ and if we allow $k$ to vary and keep $p$ fixed, we obtain an infinite family of $L$-equienergetic graphs with same $p$ and so on.\\
 \indent If $D[G]$ and $G^*$ are respectively the double graph and the extended double cover of the graph $G$, then the following result gives the construction of $L$-equienergetic graphs with different number of edges.
 \begin{theorem}
 Let $G_1(n,m_1)$ and $G_2(n,m_2)$ be two graphs of order $n\equiv 0(\mod 4)$ with $m_2=m_1+\frac{n}{4}$. Then for $p\geq 4n+k$ and $m_2\leq \frac{n(k-2)}{4}+\frac{k^2}{16}$, $k\geq 4$, we have $$LE(D(G_1^*)\vee \bar{K_p})=LE(D(G_2)^*\vee \bar{K_p})$$.
 \end{theorem}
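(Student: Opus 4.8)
The plan is to reduce both sides to a single generic Laplacian-energy formula for a join $H\vee\bar{K_p}$ and then to check that the two graphs in the statement, namely $H_1=D[G_1^*]$ (the double graph of the extended double cover of $G_1$) and $H_2=(D[G_2])^*$ (the extended double cover of the double graph of $G_2$), feed identical parameters into that formula. First I would record orders and sizes. By Theorem 3.2 the graph $G_1^*$ has $2n$ vertices and $2m_1+n$ edges; since in a double graph every vertex of degree $\delta$ acquires degree $2\delta$ (so the order doubles and the size quadruples), $H_1=D[G_1^*]$ has $4n$ vertices and $4(2m_1+n)=8m_1+4n$ edges. On the other side $D[G_2]$ has $2n$ vertices and $4m_2$ edges, and its extended double cover $H_2=(D[G_2])^*$ has $2(2n)=4n$ vertices and $2(4m_2)+2n=8m_2+2n$ edges (the extended double cover doubles the order and raises each degree by one, so the size passes from $M$ to $2M+N$). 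The crucial observation is that the hypothesis $m_2=m_1+\frac{n}{4}$ is exactly the identity $8m_1+4n=8m_2+2n$: thus $H_1$ and $H_2$ have the same order $N=4n$ and the same size $M:=8m_1+4n=8m_2+2n$.

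Next I would apply Lemma 2.3 to each join. For any graph $H$ of order $N=4n$ and size $M$, with Laplacian spectrum $0=\nu_N\le\cdots\le\nu_1$, the $L$-spectrum of $H\vee\bar{K_p}$ is $N+p$ (once), $N$ ($p-1$ times), $p+\nu_j$ for $j=1,\dots,N-1$, and $0$ (once), with average degree $\frac{2m'}{n'}=\frac{2M+2Np}{N+p}$. The essential point is that the individual eigenvalues $\nu_j$ will enter the Laplacian energy only through their sum $\sum_{j=1}^{N-1}\nu_j=\operatorname{tr}L(H)=2M$, once the absolute values are removed.

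The non-negativity step is where the hypotheses on $p$ and $m_2$ are used, and it is the part requiring care. Since every Laplacian eigenvalue satisfies $\nu_j\ge0$, it suffices to show $p\ge\frac{2m'}{n'}$; this is equivalent to $p^2-4np-4n\ge 16m_2$, and using $p\ge 4n+k$ one gets $p^2-4np=p(p-4n)\ge(4n+k)k=4nk+k^2$, whence $p^2-4np-4n\ge 4nk+k^2-4n$, which exceeds $16m_2$ by the bound $m_2\le\frac{n(k-2)}{4}+\frac{k^2}{16}$ (indeed with room to spare). One also checks $4n-\frac{2m'}{n'}\le0$ and $4n+p-\frac{2m'}{n'}\ge0$, both immediate from $p\ge 4n$. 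With these signs fixed,
\[
LE(H\vee\bar{K_p})=\left(4n+p-\tfrac{2m'}{n'}\right)+(p-1)\left(\tfrac{2m'}{n'}-4n\right)+(4n-1)\left(p-\tfrac{2m'}{n'}\right)+2M+\tfrac{2m'}{n'},
\]
which collapses to $8n+2M+(p-4n)\frac{2m'}{n'}$, a quantity depending only on $n$, $M$ and $p$.

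Finally, since $H_1=D[G_1^*]$ and $H_2=(D[G_2])^*$ share the same order $N=4n$, the same size $M$, and hence the same $\frac{2m'}{n'}$, and both satisfy the stated hypotheses, the formula gives $LE(D[G_1^*]\vee\bar{K_p})=LE((D[G_2])^*\vee\bar{K_p})$. The hard part is really the bookkeeping: distinguishing the two different composite operations, computing the orders and sizes of the iterated double-cover and double-graph constructions correctly, and recognizing that $m_2=m_1+\frac{n}{4}$ is precisely the size-balancing condition; after that, the cancellation of the individual eigenvalues through the trace identity forces the energies to coincide. If one prefers the paper's explicit style, the same computation can be carried out with the full spectra written out via Theorems 3.2 and 4.1, which would additionally require the signless Laplacian spectrum of $D[G_2]$ (namely $2\lambda_i^+$ and $2e_i$); but this is unnecessary, since only $N$ and $M$ survive into the final expression.
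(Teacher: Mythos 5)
Your proof is correct, and it reaches exactly the paper's final expression ($8n+2M+(p-4n)\tfrac{2m'}{n'}=16n+16m_1+(p-4n)\tfrac{2m'_1}{n'}$), but by a genuinely leaner route. The paper writes out the full $L$-spectra of $D(G_1^*)$ and $D(G_2)^*$ via Theorems 3.2 and 4.1 (which requires also the $Q$-spectrum of $D[G_2]$), then verifies the sign of each family of shifted eigenvalues separately. You instead observe that after Lemma 2.3 the only data that survive are the order $N$, the size $M$ (through the trace identity $\sum\nu_i=2M$), and the single inequality $p\geq\tfrac{2m'}{n'}$, which you correctly derive from the hypotheses; the theorem then reduces to the bookkeeping identity $8m_1+4n=8m_2+2n$, i.e.\ $m_2=m_1+\tfrac{n}{4}$. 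This buys two things. First, it makes transparent that the result holds for \emph{any} two graphs of equal order and size joined with $\bar{K_p}$ for $p$ large enough, with the specific constructions entering only through the size computation. Second, your edge count for $D(G_2)^*$, namely $2(4m_2)+2n=8m_2+2n$, is the correct one, whereas the paper's displayed average degree $\tfrac{2m_2'}{n'}=\tfrac{16m_2+8n+8pn}{p+4n}$ overcounts these edges by $n$ (correspondingly, its listed spectrum has $p+2\lambda_i^{+}+4$ and $p+2d_i'+2$ twice, where it should have $p+2\lambda_i^{+}+2$, $p+2d_i'$ and $p+2d_i'+2$); with the corrected count $\tfrac{16m_2+4n+8pn}{p+4n}$ the two average degrees actually coincide under $m_2=m_1+\tfrac{n}{4}$, which is what the paper's final cancellation implicitly needs. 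So your version is not only valid but repairs a slip in the paper's own computation.
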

 \begin{proof}
Let $\mu_i$, $d_i$ and $\mu_i^+$ for $i=1,2,\cdots,n$ be respectively the $L$-spectra, degree sequence and $Q$-spectra of $G_1$ and let $\lambda_i$, $d_i^{\prime}$ and $\lambda_i^+$ be the $L$-spectra, degree sequence and $Q$-spectra of the graph $G_2$. Then by Theorems 3.2 and 4.1 and Lemma 2.3, the $L$-spectra of the graphs $D(G_1^*)\vee \bar{K_p}$ and $D(G_2)^*\vee \bar{K_p}$ are respectively as $p+4n,~ p+2\mu_i ~~(1\leq i\leq n-1),~ p+2\mu_i^{+}+4, p+2d_i+2$ ($2$  times) $(1\leq i\leq n), ~4n$ ($(p-1)$ times), $0$ and $p+4n,~ p+2\lambda_i ~~(1\leq i\leq n-1), ~p+2\lambda_i^{+}+4,~ p+2d_i^{\prime}+2$ ($2$ times) $(1\leq i\leq n), ~~4n $($(p-1)$ times), $0$, with average vertex degrees
$$\frac{2m_1^{\prime}}{n^{\prime}}=\frac{16m_1+8n+8pn}{p+4n} ,  \frac{2m_2^{\prime}}{n^{\prime}}=\frac{16m_2+8n+8pn}{p+4n}.$$
Now, if $p\geq 4n+k$ and $m_2\leq \frac{n(k-2)}{4}+\frac{k^2}{16}$, $k\geq 4$, we have for $i=1,2,\cdots,n$
$$p+2\mu_i-\frac{2m_1^{\prime}}{n^{\prime}}=p+2\mu_i-\frac{16m_1+8n+8pn}{p+4n}$$
$$=\frac{p(p-4n)+2(p+4n)\mu_i-16m_1-8n-8pn}{p+4n}\geq \frac{k(4n+k)-4n(k-2)-k^2-8n}{p+4n}=0.$$
Similarly, we can show that $$p+2\mu_i^{+}+4-\frac{2m_1^{\prime}}{n^{\prime}}\geq 0, p+2d_i+2-\frac{2m_1^{\prime}}{n^{\prime}}\geq 0.$$
Therefore,
\begin{align*}
LE(D(G_1^*)\vee \bar{K_p})&=|p+4n-\frac{2m_1^{\prime}}{n^{\prime}}|+\sum\limits_{i=1}^{n-1}|p+2\mu_i-\frac{2m_1^{\prime}}{n^{\prime}}|+\sum\limits_{i=1}^{n}|p+2\mu_i^{+}+4-\frac{2m_1^{\prime}}{n^{\prime}}|\\&+2\sum\limits_{i=1}^{n}|p+2d_i+2-\frac{2m_1^{\prime}}{n^{\prime}}|+(p-1)|4n-\frac{2m_1^{\prime}}{n^{\prime}}|+|0-\frac{2m_1^{\prime}}{n^{\prime}}|\\&
=16n+16m_1+(p-4n)\frac{2m_1^{\prime}}{n^{\prime}}.
\end{align*}
 Proceeding similarly for the graph $D(G_2)^*\vee \bar{K_p}$ it can be seen that
 \begin{align*}
LE(D(G_2)^*\vee \bar{K_p})=12n+16m_2+(p-4n)\frac{2m_2^{\prime}}{n^{\prime}}.
 \end{align*}
 \indent Using the fact $m_2=m_1+\frac{n}{4}$, the result follows.
\end{proof}
\indent Let $D[G_1]$ be the double graph of the graph $G_1(n,m_1)$ and let $G_2^*$ be the extended double cover of the graph $G_2(n,m_2)$, then for $p\geq 2n+k$ and $m_1\leq \frac{k(2n+k)}{8}$, $k\geq 4$, we have from Theorem 4.6
\begin{align}
LE(D[G_1]\vee \bar{K_p})=4n+8m_1+(p-2n)\frac{2m_1^{\prime}}{n^{\prime}}.
\end{align}
Also, for $p\geq 2n+k$ and $m_2\leq \frac{n(k-1)}{2}+\frac{k^2}{4}$, $k\geq 4$, we have by Theorem 4.3
\begin{align}
LE(G_2^*\vee \bar{K_p})=6n+4m_2+(p-2n)\frac{2m_2^{\prime}}{n^{\prime}}.
\end{align}
\indent If we suppose that $4m_1=2m_2+n$, then it follows from (4.1) and (4.2) that $$LE(D[G_1]\vee \bar{K_p})=LE(G_2^*\vee \bar{K_p}).$$
\indent This gives another construction of families of graphs with same Laplacian energy, same number of vertices but different number of edges. Next we give another way of constructing a family of graphs having same number of vertices, same Laplacian energy but different number of edges.
\begin{theorem}
Let $G_1(n,m_1)$ and $G_2(n,m_2)$ be two graphs with $m_2=2m_1$. Then for $p\geq 4n+k$ and $m_2\leq \frac{k(4n+k)}{8}-n$, $k\geq 4$, we have $LE(D(G_1^*)\vee \bar{K_p})=LE(G_2^{**}\vee \bar{K_p})$.
\end{theorem}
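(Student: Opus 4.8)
The plan is to compute both Laplacian energies explicitly and show they coincide once the absolute values are resolved. First I would record the $L$-spectra of the two graphs before the join is taken. Since $G_1^*$ is bipartite, Theorem 3.2 gives its $L$-spectra as $\mu_i,\ \mu_i^{+}+2$ ($1\le i\le n$), and its degree sequence is $d_i+1$ with each value repeated twice; applying Theorem 4.1 with $k=2$ to $H=G_1^*$ then yields the $L$-spectra of $D(G_1^*)$ as $2\mu_i$, $2\mu_i^{+}+4$ ($1\le i\le n$) together with $2d_i+2$ taken twice ($1\le i\le n$), a graph on $4n$ vertices with $8m_1+4n$ edges. On the other side, the $k=2$ case established inside the proof of Theorem 3.3 gives the $L$-spectra of $G_2^{**}$ as $\lambda_i,\ \lambda_i+2,\ \lambda_i^{+}+2,\ \lambda_i^{+}+4$ ($1\le i\le n$), again a graph on $4n$ vertices, with $4m_2+4n$ edges.

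Next I would apply Lemma 2.3 to join each graph with $\bar{K_p}$. Both joins have $n^{\prime}=p+4n$ vertices, and their edge counts are $8m_1+4n+4np$ and $4m_2+4n+4np$, so their average degrees are
\[
\frac{2m_1^{\prime}}{n^{\prime}}=\frac{16m_1+8n+8np}{p+4n},\qquad \frac{2m_2^{\prime}}{n^{\prime}}=\frac{8m_2+8n+8np}{p+4n}.
\]
The crucial algebraic observation is that the hypothesis $m_2=2m_1$ forces $16m_1=8m_2$, so these two average degrees are equal; call the common value $A$. Lemma 2.3 then produces for $D(G_1^*)\vee \bar{K_p}$ the spectrum $p+4n$, $p+2\mu_i$ ($1\le i\le n-1$), $p+2\mu_i^{+}+4$, $p+2d_i+2$ (twice) ($1\le i\le n$), $4n$ ($(p-1)$ times) and $0$, and the analogous list $p+4n$, $p+\lambda_i$ ($1\le i\le n-1$), $p+\lambda_i+2$, $p+\lambda_i^{+}+2$, $p+\lambda_i^{+}+4$ ($1\le i\le n$), $4n$ ($(p-1)$ times), $0$ for $G_2^{**}\vee \bar{K_p}$.

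The main work, and the step I expect to be the real obstacle, is the sign analysis needed to drop the absolute values in $LE$. I would show that under $p\ge 4n+k$ and $m_2\le \frac{k(4n+k)}{8}-n$ with $k\ge 4$, every eigenvalue of the form $p+(\text{nonnegative})$ is at least $A$, while $4n$ and $0$ lie below $A$. The decisive inequality is $A\le p$, which unwinds to $16m_1+8n\le p(p-4n)$; using $16m_1=8m_2$ and the bound on $m_2$ one obtains $16m_1+8n\le k(4n+k)$, and $p\ge 4n+k$ gives $p(p-4n)\ge k(4n+k)$, so the inequality holds exactly at the edge of the hypothesis. The companion inequality $4n\le A$ follows immediately from $p\ge 4n$, since $4n(p+4n)\le 4np+16m_1+8n\le 16m_1+8n+8np$. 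Because the shifted eigenvalues of $G_2^{**}$ are likewise all at least $p$, the same two bounds serve both graphs simultaneously.

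With all signs fixed I would simply sum. For $D(G_1^*)\vee \bar{K_p}$ the $4n$ large eigenvalues total $4np+12n+16m_1$, and combining this with the contributions $(p-1)(A-4n)$ from $4n$ and $A$ from $0$ collapses to
\[
LE(D(G_1^*)\vee \bar{K_p})=16n+16m_1+(p-4n)A.
\]
Repeating the computation for $G_2^{**}\vee \bar{K_p}$, whose large eigenvalues total $4np+12n+8m_2$, gives
\[
LE(G_2^{**}\vee \bar{K_p})=16n+8m_2+(p-4n)A.
\]
Since $16m_1=8m_2$ under $m_2=2m_1$, the two expressions are identical, which is the desired equality. The only nonroutine ingredient is the sign analysis above; everything else is bookkeeping of the spectra supplied by Theorems 3.2, 3.3 and 4.1 and of the join via Lemma 2.3.
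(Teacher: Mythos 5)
Your proposal is correct and follows essentially the same route as the paper's proof: identical spectra from Theorems 3.2, 3.3, 4.1 and Lemma 2.3, the same sign analysis reducing to $16m_1+8n\le k(4n+k)\le p(p-4n)$, and the same closed forms $16n+16m_1+(p-4n)\frac{2m_1^{\prime}}{n^{\prime}}$ and $16n+8m_2+(p-4n)\frac{2m_2^{\prime}}{n^{\prime}}$ (your observation that the two average degrees actually coincide under $m_2=2m_1$ is a nice tidying of the final step, which the paper leaves implicit). The one blemish is your justification of $4n\le A$: the intermediate claim $4n(p+4n)\le 4np+16m_1+8n$ amounts to $16n^2\le 16m_1+8n$, which need not hold, but the conclusion is still immediate since $p\ge 4n$ gives $8np\ge 4n(p+4n)$ and hence $A\ge \frac{8np}{p+4n}\ge 4n$.
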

\begin{proof}
Let $\mu_i, \mu_i^+$ and $d_i$ for $i=1,2,\cdots,n$ be respectively the $L$-spectra, $Q$-spectra and the degree sequence of the graph $G_1$ and let $\lambda_i$ and $\lambda_i^+$ be the $L$-spectra and $Q$-spectra of the graph $G_2$. Then by Theorems 3.2 and 4.1 and Lemma 2.3, the $L$-spectra of $D(G_1^*)\vee \bar{K_p}$ is $p+4n,~p+2\mu_i ~~(1\leq i\leq n-1), ~p+2\mu_i^{+}+4, ~p+2d_i+2$ ($2$ times) $(1\leq i\leq n), ~4n $ ($(p-1)$ times), $0$. Also by Theorem 3.3 and Lemma 2.3, the $L$-spectra of the graph $G_2^{**}\vee \bar{K_p}$ is $p+4n, ~p+\lambda_i ~~(1\leq i\leq n-1), ~p+\lambda_i+2, ~p+\lambda_i^{+}+2, ~p+\lambda_i^{+}+4 ~~(1\leq i\leq n), ~4n$($(p-1)$ times), $0$, with average vertex degrees
$$\frac{2m_1^{\prime}}{n^{\prime}}=\frac{16m_1+8n+8pn}{p+4n} , \frac{2m_2^{\prime}}{n^{\prime}}=\frac{8m_2+8n+8pn}{p+4n}.$$
So, if $p\geq 4n+k$ and $m_2\leq \frac{k(4n+k)}{8}-n$, $k\geq 4$, we have for $i=1,2,\cdots,n$
$$p+2\mu_i-\frac{2m_1^{\prime}}{n^{\prime}}=p+2\mu_i-\frac{16m_1+8n+8pn}{p+4n}$$
$$=\frac{p(p-4n)+2(p+4n)\mu_i-16m_1-8n-8pn}{p+4n}=\frac{k(4n+k)-k(4n+k)+8n-8n}{p+4n}=0.$$
Similarly, we can show $$p+2\mu_i^{+}+4-\frac{2m_1^{\prime}}{n^{\prime}}\geq 0, p+2d_i+2-\frac{2m_1^{\prime}}{n^{\prime}}\geq 0.$$
Therefore,
\begin{align*}
LE(D(G_1^*)\vee \bar{K_p})&=|p+4n-\frac{2m_1^{\prime}}{n^{\prime}}|+\sum\limits_{i=1}^{n-1}|p+2\mu_i-\frac{2m_1^{\prime}}{n^{\prime}}|+\sum\limits_{i=1}^{n}|p+2\mu_i^{+}+4-\frac{2m_1^{\prime}}{n^{\prime}}|\\&+2\sum\limits_{i=1}^{n}|p+2d_i+2-\frac{2m_1^{\prime}}{n^{\prime}}|+(p-1)|4n-\frac{2m_1^{\prime}}{n^{\prime}}|+|0-\frac{2m_1^{\prime}}{n^{\prime}}|\\&
=16n+(p-4n)\frac{2m_1^{\prime}}{n^{\prime}}+16m_1.
\end{align*}
\indent Proceeding similarly as above for the graph $G_2^{**}\vee \bar{K_p}$, we can see that
\begin{align*}
LE(G_2^{**}\vee \bar{K_p})=16n+(p-4n)\frac{2m_2^{\prime}}{n^{\prime}}+8m_2.
\end{align*}
\indent Using $m_2=2m_1$, the result follows.
\end{proof}
\indent Theorem 4.9 generates $L$-equienergetic graphs with same number of vertices but different number of edges, infact when one graph contains twice the number of edges as contained in other. Lastly we give the construction of family of graphs with same number of vertices, edges and Laplacian energy by means of cartesian product and extended double cover.
\begin{theorem}
Let $G_1(n,m)$ and $G_2(n,m)$ be two connected non-bipartite graphs. Then for $p\geq n+2$, and $\min(\mu_n^+, \lambda_n^+)\geq \frac{2m}{n}-2$ we have $LE(G_1^*\times K_p)=LE(G_2^*\times K_p) $ if and only if $LE(G_1)=LE(G_2)$.
\end{theorem}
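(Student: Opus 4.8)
The plan is to compute the Laplacian spectrum of $G^*\times K_p$ explicitly, show that $LE(G^*\times K_p)$ is an affine function of $LE(G)$ whose additive and multiplicative constants depend only on $n,m,p$, and then read off the equivalence.

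First I would assemble the spectrum. By Theorem 3.2 the $L$-spectrum of $G^*$ is $\mu_1,\dots,\mu_n,\ \mu_1^{+}+2,\dots,\mu_n^{+}+2$, while $L(K_p)=pI_p-J_p$ has spectrum $0$ (once) and $p$ (with multiplicity $p-1$). Hence, by Lemma 2.1, the $L$-spectrum of $G^*\times K_p$ splits into four families: $\mu_i$ and $\mu_i^{+}+2$ each once, together with $\mu_i+p$ and $\mu_i^{+}+2+p$ each with multiplicity $p-1$, for $1\le i\le n$. The product has $n'=2np$ vertices, and summing the eigenvalues gives average degree $\frac{2m'}{n'}=p+\frac{2m}{n}$.

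Next I would fix the sign of each eigenvalue's deviation from $\frac{2m'}{n'}$, writing $\bar d=\frac{2m}{n}$. For the family $\mu_i$ we have $\mu_i-p-\bar d\le \mu_1-p-\bar d\le n-(n+2)<0$, using $\mu_1\le n$ and $p\ge n+2$; for the family $\mu_i^{+}+2+p$ the hypothesis $\mu_n^{+}\ge \bar d-2$ gives $\mu_i^{+}+2-\bar d\ge 0$; and the family $\mu_i+p$ contributes exactly $|\mu_i-\bar d|$, i.e. it reproduces $LE(G)$ with multiplicity $p-1$. The delicate family is $\mu_i^{+}+2-p-\bar d$, and the hard part will be showing these are all \emph{strictly} negative, which is exactly where $p\ge n+2$ is used; note a mixed sign here would make the term involve a proper subpartial sum of the $\mu_i^{+}$ and destroy graph-independence, so all-negative is forced. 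Invoking the upper bound $\mu_1^{+}\le \frac{2m}{n-1}+n-2$ gives $\mu_1^{+}+2\le \frac{2m}{n-1}+n$, and the required inequality $\mu_1^{+}+2<p+\bar d$ reduces to $\frac{2m}{n(n-1)}<p-n$, which holds since $\frac{2m}{n(n-1)}\le 1<2\le p-n$.

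With all signs fixed, each of the four sums telescopes using $\sum_i\mu_i=\sum_i\mu_i^{+}=2m=n\bar d$: the $\mu_i$-family gives $np$, the $\mu_i^{+}+2$-family gives $n(p-2)$, the $\mu_i^{+}+2+p$-family gives $2n(p-1)$, and the $\mu_i+p$-family gives $(p-1)LE(G)$. Adding these yields
$$LE(G^*\times K_p)=4n(p-1)+(p-1)LE(G)=(p-1)\big(4n+LE(G)\big).$$
Applying this to $G_1$ and $G_2$ (the hypotheses $p\ge n+2$ and $\min(\mu_n^{+},\lambda_n^{+})\ge \bar d-2$ validate the sign analysis for each) gives $LE(G_i^*\times K_p)=(p-1)\big(4n+LE(G_i)\big)$. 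Since $p-1>0$, the two are equal if and only if $LE(G_1)=LE(G_2)$, which is the claim. The only input beyond the paper's own lemmas is the signless-Laplacian estimate $\mu_1^{+}\le \frac{2m}{n-1}+n-2$ for connected graphs; confirming this bound (or substituting an equivalent one) is the single step demanding genuine care.
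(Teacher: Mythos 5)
Your proposal is correct and follows essentially the same route as the paper: you compute the $L$-spectrum of $G^*\times K_p$ from Theorem 3.2 and Lemma 2.1, fix the sign of each eigenvalue's deviation from $\frac{2m}{n}+p$, and arrive at the same affine relation $LE(G^*\times K_p)=(p-1)LE(G)+4n(p-1)$, from which the equivalence is immediate. The only difference is that you explicitly justify the negativity of the terms $\mu_i^{+}+2-p-\frac{2m}{n}$ via the external bound $\mu_1^{+}\le \frac{2m}{n-1}+n-2$ for connected graphs, a verification the paper silently omits (and which genuinely requires such a refined bound, since the crude estimate $\mu_1^{+}\le 2n-2$ does not suffice when $m$ is small), so your write-up is if anything more complete than the original.
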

\begin{proof}
Let $0=\mu_n<\mu_{n-1}\leq\cdots\leq \mu_1$ and $0<\mu_n^+<\mu_{n-1}^+\leq\cdots\leq \mu_1^+$ be respectively the $L$-spectra and the $Q$-spectra of the graph $G_1$ and let $0=\lambda_n<\lambda_{n-1}\leq\cdots\leq \lambda_1$ and $0<\lambda_n^+<\lambda_{n-1}^+\leq\cdots\leq \lambda_1^+$ be respectively the $L$-spectra and $Q$-spectra of the graph $G_2$. Then by Theorem 3.2 and Lemma 2.1, the $L$-spectra of the graphs $G_1^*\times K_p$ and $G_1^*\times K_p$ are respectively as $\gamma_i+q_j$ and $\theta_i+q_j$, $i=1,2,\cdots,2n,~~j=1,2,\cdots,n$, where
$$\gamma_i=\left \{\begin{array}{lr}\mu_i, &\mbox{if $i=1,2,\cdots,n$}\\
\mu_i^{+}+2, &\mbox{if $i=n+1,n+2,\cdots,2n,$}
\end{array} \right.$$
$$\theta_i=\left \{\begin{array}{lr}\lambda_i, &\mbox{if $i=1,2,\cdots,n$ }\\
\lambda_i^{+}+2, &\mbox{if $i=n+1,n+2,\cdots,2n$}
\end{array} \right.$$
 and $p=q_1=q_2=\cdots=q_{p-1},~~ q_p=0$
with average vertex degree $$\frac{2m^{\prime}}{n^{\prime}}=\frac{2m}{n}+p.$$
Therefore,
\begin{align*}
LE(G_1^*\times K_p)&=\sum\limits_{i=1}^{2n}\sum\limits_{j=1}^{n}|\gamma_i+q_j-\frac{2m^{\prime}}{n^{\prime}}|\\&=(p-1)\sum\limits_{i=1}^{2n}|p+\gamma_i-\frac{2m^{\prime}}{n^{\prime}}|+\sum\limits_{i=1}^{n}|\gamma_i-\frac{2m_1^{\prime}}{n^{\prime}}|\\&
=(p-1)LE(G_1)+4pn-4n.
\end{align*}
\indent Similarly it can be seen that
\begin{align*}
LE(G_2^*\times K_p)=(p-1)LE(G_2)+4pn-4n.
\end{align*}
 \indent It is now clear that $LE(G_1^*\times K_p)=LE(G_2^*\times K_p) $ if and only if $LE(G_1)=LE(G_2)$, therefore the result follows.
\end{proof}
\indent Since $G^*$ is always bipartite, Theorem 4.10 gives the construction of connected graphs from a given pair of $L$-equienergetic bipartite graphs having same number of vertices, edges and Laplacian energy. Moreover if $t$ is the first value of $p$ satisfying the conditions in Theorem 4.10, then every value greater than $t$ also satisfies this condition, therefore we obtain an infinite family of $L$-equienergetic graph pairs.


\begin{thebibliography}{90}
\bibitem{a} T. Alesic, Upper bounds for Laplacian energy of graphs, MATCH Commun. Math. Comput. Chem. 60(2008) 435-439.
\bibitem{b} R. Balakrishnan, The energy of a graph, Linear Algebra Appl. 387(2004) 287-295.
\bibitem{bva} A. S. Bonifacio, C. T. M. Vinagre and N. M. Abreu, Constructing pairs of equienergetic and non-cospectral graphs, Applied Mathematics Letters 21(2008) 338-341.
\bibitem{c} Z. Chen, Spectra of extended double cover graphs, Czechoslovak Math. J. 54(2004) 1077-1082.
\bibitem{cds} D. Cvetkovic, M. Doob and H. Sachs, Spectra of graphs-Theory and Application, Academic Press, New York, 1980.
\bibitem{cs} D. Cvetkovic, S. K. Simic, Towards a spectral theory of graphs based on signless Laplacian I, Publ. Inst. Math. (Beograd) 85(2009) 19-33.
\bibitem{fa} G. H. Fath-Tabar, A. R. Ashrafi, Some remarks on the Laplacian eigenvalues and Laplacian energy of graphs, Math. Commun. 15(2010) 443-451.
\bibitem{f} M. Fiedler, Algebraic Connectivity of Graphs, Czechoslovak Math. J. 23(1973) 298-305.
\bibitem{g} I. Gutman, The Energy of a graph, Ber. Math. Statist. Sekt. Forschungszenturm Graz., 103(1978) 1-22.
\bibitem{gp} I. Gutman and O. E. Polansky, Mathematical concepts in organic chemistry, Springer Verlag, Berlin 1986.
\bibitem{gz} I. Gutman, B. Zhou, Laplacian energy of a graph, Linear Algebra Appl. 414(2006) 29-37.
\bibitem{lsg} X. Li, Y. Shi and I. Gutman, Graph Energy, Springer new York 2012.
\bibitem{ms} M. S. Marino and N. Z. Salvi, Generalizing double graphs, Atti dell' Accademia Peloritana dei pericolanti classe di scienze Fisiche, Matematiche e Naturali Vol. LXXXV CIA 0702002 (2007).
\bibitem{rg} S. Radenkovic, I. Gutman, Total electron energy and Laplacian energy: How far the analog goes?, J. Serb. Chem. Soc. 72(2007) 1343-1350.
\bibitem{rgra} H. S. Ramane, H. B. Walikar, S. B. Rao, B. D. Acharya, P. R. Hampiholi, S. R. Jog, I. Gutman, Equienergetic graphs, Kragujevac J. Math. 26(2004) 5-13.
\bibitem{wh} H. Wang, H. Hua, Note on Laplacian energy of graphs,  MATCH Commun. Math. Comput. Chem. 59(2008) 373-380.
\bibitem{fz} F. Zhang, Matrix Theory, Basic Results and Techniques, Springer Verlag, Berlin 1999.
\bibitem{z} B. Zhou, More on energy and Laplacian energy, MATCH Commun. Math. Comput. Chem. 54(2010) 75-84.
\bibitem{zg} B. Zhou, I. Gutman, On Laplacian energy of graphs, MATCH Commun. Math. Comput. Chem. 57(2007) 211-220.

\end{thebibliography}
\end{document}